\newtheorem{lemma}{Lemma}[section]
\newtheorem{theorem}[lemma]{Theorem}
\newtheorem{corollary}[lemma]{Corollary}
\newtheorem{thm}{Main Theorem}
\newcommand{\stat}{{\mathsf{stat}}}
\newcommand{\inv}{{\mathsf{inv}}}
\newcommand{\maj}{{\mathsf{maj}}}
\newcommand{\des}{{\mathsf{des}}}
\newcommand{\Des}{{\mathsf{Des}}}
\newcommand{\fdes}{{\mathsf{fdes}}}
\newcommand{\fmaj}{{\mathsf{fmaj}}}
\newcommand{\dmaj}{{\mathsf{dmaj}}}
\newcommand{\nega}{{\mathsf{neg}}}
\newcommand{\Nega}{{\mathsf{Neg}}}
\newcommand{\ddes}{{\mathsf{ddes}}}
\newcommand{\col}{{\mathsf{col}}}
\newcommand{\sgm}{{\mathsf{sgm}}}
\def\B{\mathcal{B}}
\def\D{\mathcal{D}}
\def\G{\mathsf{G}}
\def\F{\mathcal{F}}
\def\P{\mathsf{P}}
\def\L{\mathsf{L}}
\def\S{\mathbf{S}}
\def\H{\mathbf{H}}
\def\z{\mathbf{z}}
\newcommand{\ch}{{\textsf{ch}}}
\newcommand{\bars}{{\textsf{bar}}}
\newcommand{\wt}{{\textsf{wt}}}
\renewcommand{\d}{\displaystyle}
\begin{document}
\title{Signed Euler-Mahonian identities}

\author{Sen-Peng Eu}
\address{Department of Mathematics \\
National Taiwan Normal University \\
Taipei, Taiwan 11677, ROC} \email[Sen-Peng Eu]{senpengeu@gmail.com}

\author{Zhicong Lin}
\address{Research Center for Mathematics and Interdisciplinary Sciences \\
Shandong University \\
Qingdao 266237, P.R.~China}
\email[Zhicong Lin]{linz@sdu.edu.cn}

\author{Yuan-Hsun Lo}
\address{Department of Applied Mathematics \\
National Pingtung University \\
Pingtung, Taiwan 90003, ROC}
\email[Yuan-Hsun Lo]{yhlo@mail.nptu.edu.tw}

\subjclass[2010]{05A05, 05A19}

\keywords{Signed Eulerian, Signed Mahonian, Coxeter groups, Wreath product}

\thanks{Partially supported by Ministry of Science and Technology, Taiwan under Grants 107-2115-M-003-009-MY3 (S.-P.~Eu) and 108-2115-M-153-004-MY2 (Y.-H.~Lo), the National Natural Science Foundation of China under Grant 11871247 (Z.~Lin), and the project of Qilu Young Scholars of Shandong University (Z.~Lin).}

\dedicatory{Dedicated to Xuding Zhu on the occasion of his 60th birthday}

\date{\today}

\maketitle


\begin{abstract}
A relationship between signed Eulerian polynomials and the classical Eulerian polynomials on $\mathfrak{S}_n$ was given by D\'{e}sarm\'{e}nien and Foata in 1992, and a refined version, called signed Euler-Mahonian identity, together with a bijective proof were proposed by Wachs in the same year.
By generalizing this bijection, in this paper we extend the above results to the Coxeter groups of types $B_n$, $D_n$, and the complex reflection group $G(r,1,n)$, where the `sign' is taken to be any one-dimensional character.
Some obtained identities can be further restricted on some particular set of permutations.
We also derive some new interesting sign-balance polynomials for types $B_n$ and $D_n$.
\end{abstract}

\section{Introduction}


Let $\mathfrak{S}_n$ be the symmetric group of $[n]:=\{1,2,\ldots,n\}$.
The \emph{inversion number} and \emph{descent set} of $\pi=\pi_1\pi_2\cdots\pi_n\in\mathfrak{S}_n$ are defined respectively by
\begin{align}
\inv(\pi) &:= |\{(i,j)\in[n]^2:\, i<j \text{ and } \pi_i>\pi_j\}|, \label{eq:inv_def} \\
\Des(\pi) &:= \{i\in[n-1]:\, \pi_i > \pi_{i+1}\}, \notag
\end{align}
and denoted by $\des(\pi)$ the cardinality of $\Des(\pi)$.
Define the \emph{major index} of $\pi$ by
\begin{align*}
\maj(\pi):= \sum_{i\in\Des(\pi)} i.
\end{align*}

In the theory of Coxeter groups, the length $\ell(\pi)$ of a group element $\pi$ is the minimal number of Coxeter generators needed to express $\pi$.
It is well-known that $\mathfrak{S}_n$ is the Coxeter group of type $A_{n-1}$, where the generators are the adjacent transpositions, say $s_i:=(i,i+1)$ for $i\in[n]$, and $\ell(\pi)=\inv(\pi)$ for $\pi\in\mathfrak{S}_n$.
A fundamental result of MacMahon~\cite{MacMahon_13} states that $\maj$ and $\inv$ have the same distribution over $\mathfrak{S}_n$:
$$
\sum_{\pi\in\mathfrak{S}_n}q^{\maj(\pi)}=[n]_q!=\sum_{\pi\in\mathfrak{S}_n}q^{\inv(\pi)},
$$
where $[n]_q!=[n]_q[n-1]_q\cdots[1]_q$ with $[i]=1+q+\cdots+q^{i-1}$.
In a Coxeter group, a statistic is called \emph{Mahonian} if it is equidistributed with the length function $\ell$ of the group.
As the generating function of $\sum_{\pi\in\mathfrak{S}_n}t^{\des(\pi)}$ can be traced back to Euler's wrok, see e.g. ~\cite{Petersen_15,Stanley_97}, any statistic equidistributed with $\des$ over $\mathfrak{S}_n$ is called \emph{Eulerian}.
The joint distribution of one Eulerian statistic with one Mahonian statistic is known as the {\em Euler-Mahonian distribution}, which  was introduced by Foata and Zeilberger~\cite{FZ} in 1990 and extensively studied since then. The most important example is the bivariate distribution of the descent number and major index over the symmetric group. 


Motivated by the work of Loday~\cite{Lod}, D\'{e}sarm\'{e}nien and Foata~\cite{DF_92} first investigated the relationship between a \emph{signed Eulerian} polynomial and the classical Eulerian polynomial, called \emph{signed Eulerian identity}, and obtained, for any positive integer $n$, that
\begin{equation}\label{eq:signedE-A-even}
\sum_{\pi\in\mathfrak{S}_{2n}} (-1)^{\ell(\pi)}t^{\des(\pi)} = (1-t)^n\sum_{\pi\in\mathfrak{S}_n}t^{\des(\pi)}
\end{equation}
and
\begin{equation}\label{eq:signedE-A-odd}
\sum_{\pi\in\mathfrak{S}_{2n+1}} (-1)^{\ell(\pi)}t^{\des(\pi)} = (1-t)^n\sum_{\pi\in\mathfrak{S}_{n+1}}t^{\des(\pi)}.
\end{equation}
By proposing an elegant involution proof for \eqref{eq:signedE-A-even} and~\eqref{eq:signedE-A-odd}, Wachs~\cite{Wachs_92} derived a $q$-analogue of \eqref{eq:signedE-A-even} as
\begin{equation}\label{eq:signedEM-A}
\sum_{\pi\in\mathfrak{S}_{2n}} (-1)^{\ell(\pi)}t^{\des(\pi)}q^{\maj(\pi)} = \prod_{i=1}^{n}(1-tq^{2i-1})^n\sum_{\pi\in\mathfrak{S}_n}t^{\des(\pi)}q^{2\maj(\pi)},
\end{equation}
which is called a \emph{signed Euler-Mahonian identity}. Setting $t=1$, identity~\eqref{eq:signedEM-A} reduces to the even case of the following {\em signed Mahonian identity} due to Gessel and Simion~\cite{Wachs_92}:
\begin{equation}\label{iden:gs}
\sum_{\pi\in\mathfrak{S}_{n}} (-1)^{\ell(\pi)}q^{\maj(\pi)}=[1]_q[2]_{-q}[3]_q[4]_{-q}\cdots[n]_{(-1)^{n-1}q}.
\end{equation}

The above signed enumeration identities have been generalized and extended in two main directions:
\begin{itemize}
\item From symmetric group to other reflection groups by Reiner~\cite{Reiner_95}, Adin--Gessel--Roichman~\cite{Adin_Gessel_Roichman_05}, Biagioli~\cite{Biagioli_06}, Biagioli--Caselli~\cite{BC_12} and Chang--Eu--Fu--Lin--Lo~\cite{CHFLL}, to name a few.
\item To restricted permutations such as $321$-avoiding permutations by Adin--Roichman~\cite{AR_04} and Eu--Fu--Pan--Ting~\cite{EFPT_15}, Simsun permutations by Eu--Fu--Pan~\cite{EFP}, PRW permutations by Lin--Wang--Zeng~\cite{LWZ} and permutations with subsequence restrictions by Eu--Fu--Hsu--Liao--Sun~\cite{EFHLS}.
\end{itemize}
In this paper, we continue with the first direction and  find some nice signed Eulerian or Euler-Mahonian identities for Coxeter groups of type $B_n$, type $D_n$ and the complex reflection group $G(r,1,n)$ that have been long-overlooked.

In the rest of the introduction, we recall some Eulerian and Mahonian statistics for Coxeter group of type $B_n$ and highlight  three of our main results.  

\subsection{Coxeter group of types $B_n$}

Let $\B_n$ be the group of \emph{signed permutations} of $[n]$, which consists of all bijections $\pi$ of $\{\pm 1, \pm 2, \ldots,\pm n\}$ onto itself such that $\pi(-i)=-\pi(i)$.
Elements in $\B_n$ are centrally symmetric and hence can be simply written in the form $\pi=\pi_1\pi_2\cdots\pi_n$, where $\pi_i:=\pi(i)$.
For convenience, we use $\bar{i}$ to represent $-i$.
$\B_n$ is also the Coxeter group of type $B_n$ with generators $s_0=(1,-1)$ and $s_i=(i,i+1)$ for $i\in[n-1]$.
Let $\ell_B$ be the length function of $\B_n$ with respect to this set of generators.
Following~\cite{BB_05}, for $\pi\in\B_n$ let $\Nega(\pi):=\{i:\,\pi_i<0\}$, then $\ell_B(\pi)$ can be represented as
\begin{align*}
\ell_B(\pi)=\inv(\pi)-\sum_{i\in\Nega(\pi)}\pi_i,
\end{align*}
where $\inv$ is defined as \eqref{eq:inv_def} with respect the natural linear order, i.e.,
$$\bar{n}<\cdots<\bar{1}<1<\cdots<n.$$
Denote by $\nega(\pi)$ the cardinality of $\Nega(\pi)$.

Adin and Roichman~\cite{Adin_Roichman_01} defined the \emph{flag descent number} and the \emph{flag major index} for $\pi\in\B_n$ by
\begin{align*}
\fdes(\pi) &:= 2\cdot\des_F(\pi)+\delta(\pi_1<0), \\
\fmaj(\pi) &:= 2\cdot\maj_F(\pi)+\nega(\pi),
\end{align*}
where $\des_F$ and $\maj_F$ are defined like $\des$ and $\maj$ respectively, but with respect to the linear order $$\bar{1}<\bar{2}<\cdots<\bar{n}<1<2<\cdots<n,$$
and $\delta(\mathsf{A})=1$ if the statement $\mathsf{A}$ is true and $0$ otherwise.

By generalizing Wach's involution, Biagioli~\cite{Biagioli_06} proposed the following signed Mahonian identity:
\begin{equation}\label{eq:signedM-B}
\sum_{\pi\in\B_{2n}}(-1)^{\ell_B(\pi)}q^{\fmaj(\pi)} = \prod_{i=1}^{n}\left(1-q^{4i-2}\right)\sum_{\pi\in\B_n}q^{2\fmaj(\pi)},
\end{equation}
an analogous version of the even case of~\eqref{iden:gs} for Coxeter group of type $B_n$.

\subsection{Extensions to one-dimensional characters}
In this paper we aim to extend the signed Eulerian-Mahonian identities to Coxeter groups of types $B_n$, $D_n$ and complex reflection group $G(r,1,n)$, where each one of the one-dimensional characters of the group is taken to be the ``sign''.
Precisely speaking, consider the polynomial
\begin{equation}\label{eq:signed-single}
\sum_{\pi\in\mathcal{W}}\chi(\pi)t^{\stat_1(\pi)}q^{\stat_2(\pi)},
\end{equation}
where $\mathcal{W}=\B_{2n},\D_{2n}$ or $G(r,1,2n)$, $\chi$ is a one-dimensional character of $\mathcal{W}$, and $\stat_1$ and $\stat_2$ are chosen as Eulerian and Mahonian statistics, respectively.
For the sake of convenience, we use $G_{r,n}$ to denote $G(r,1,n)$ throughout this paper.
The first main result in this paper is the following.
\begin{thm}[Theorem~\ref{thm:main-even}]
Let $r$ and $n$ be two positive integers.
For any $b$, $0\leq b\leq r-1$, we have
\begin{align*}
\sum_{\pi\in G_{r,2n}}  \chi_{1,b}(\pi)t^{\fdes(\pi)}q^{\fmaj(\pi)}x^{\col(\pi)} = \prod_{i=1}^{n}\left(1-t^rq^{r(2i-1)}\right)\sum_{\pi\in G_{r,n}}t^{\fdes(\pi)}(\omega^bq)^{2\fmaj(\pi)}x^{2\col(\pi)}, 
\end{align*}
where $\omega$ is a primitive $r$th root of $1$, and $\col(\pi)$ is the sum of colors in $\pi$.
\end{thm}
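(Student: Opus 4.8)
The plan is to prove the identity by a sign-reversing involution on $G_{r,2n}$ that generalizes the one Wachs used for $\mathfrak{S}_{2n}$. Write a colored permutation $\pi\in G_{r,2n}$ in window form $\pi=\pi_1\pi_2\cdots\pi_{2n}$, where $\pi_j$ carries an absolute value $|\pi_j|\in[2n]$ and a color in $\{0,1,\dots,r-1\}$, and recall that $\chi_{1,b}(\pi)$ is the product of the sign of the underlying permutation of $\pi$ in $\mathfrak{S}_{2n}$ with $\omega^{b\,\col(\pi)}$. First I would define a map $\phi\colon G_{r,2n}\to G_{r,2n}$ as follows: among the indices $i\in[n]$, pick the smallest one for which the two absolute values $2i-1$ and $2i$ do \emph{not} occupy two adjacent positions bearing the same color, and let $\phi(\pi)$ be the colored permutation obtained from $\pi$ by interchanging the absolute values $2i-1$ and $2i$, each color being left attached to its position; if there is no such $i$, declare $\pi$ a fixed point of $\phi$.

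Next I would check that $\phi$ is an involution with the right properties. Because $2i-1$ and $2i$ are consecutive integers, they compare in the same way with every other colored letter (the colors decide when they differ, the absolute values otherwise, and no third absolute value lies strictly between $2i-1$ and $2i$), so interchanging them can only affect a colored descent at a boundary occupied by both of them, and such a boundary is excluded precisely by the ``adjacent and same color'' clause. Hence $\phi$ fixes the colored descent set, and therefore the colored descent number and the colored major index; since no color leaves its position, $\col$ and the color of $\pi_1$ are also fixed, so $\phi$ preserves $\fdes$ and $\fmaj$. On the other hand the interchange composes the underlying permutation with a transposition, flipping its sign, while $\col$ is unchanged, so $\chi_{1,b}(\phi(\pi))=-\chi_{1,b}(\pi)$. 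Finally $\phi^2=\mathrm{id}$, because after the interchange the same two positions, with the same colors, still host $2i-1$ and $2i$, so the distinguished index is unchanged. It follows that the left-hand side of the identity equals the sum of $\chi_{1,b}(\pi)t^{\fdes(\pi)}q^{\fmaj(\pi)}x^{\col(\pi)}$ over the fixed points of $\phi$.

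It then remains to evaluate this fixed-point sum. A fixed point $\pi$ is exactly one in which, for every $i\in[n]$, the absolute values $2i-1$ and $2i$ lie in a common block $\{2k-1,2k\}$ with a single color; such $\pi$ corresponds bijectively to a pair $(\tau,\mathbf f)$, where $\tau=(\sigma,\mathbf c)\in G_{r,n}$ records, in its $k$-th slot, the absolute values $\{2\sigma_k-1,2\sigma_k\}$ and the color $c_k$ of block $k$, and $\mathbf f\in\{0,1\}^n$ records, block by block, whether those two absolute values appear in increasing or decreasing order. A direct computation --- the intra-block data behaving as in type $A$ and the inter-block data matching that of $\tau$ since the blocks of absolute values are linearly ordered by $\sigma$ --- gives $\col(\pi)=2\col(\tau)$, the sign of the underlying permutation of $\pi$ equal to $(-1)^{|\mathbf f|}$ (with $|\mathbf f|$ the number of ones in $\mathbf f$), $\fdes(\pi)=\fdes(\tau)+r\,|\mathbf f|$, and $\fmaj(\pi)=2\fmaj(\tau)+r\sum_{k:\,f_k=1}(2k-1)$. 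Summing over $\mathbf f\in\{0,1\}^n$ factors out $\prod_{k=1}^n\bigl(1-t^rq^{r(2k-1)}\bigr)$ and leaves $\sum_{\tau\in G_{r,n}}\omega^{2b\,\col(\tau)}t^{\fdes(\tau)}q^{2\fmaj(\tau)}x^{2\col(\tau)}$; since $\fmaj(\tau)\equiv\col(\tau)\pmod r$ and $\omega^r=1$, one has $\omega^{2b\,\col(\tau)}=\omega^{2b\,\fmaj(\tau)}$, which rewrites the remaining sum as $\sum_{\tau\in G_{r,n}}t^{\fdes(\tau)}(\omega^bq)^{2\fmaj(\tau)}x^{2\col(\tau)}$, namely the right-hand side of the theorem.

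I expect the delicate point to be the colored-descent-set invariance in the second paragraph: the case analysis must be carried out against the exact colored linear order underlying $\fdes$ and $\fmaj$, including the boundary at position $1$ that feeds into $\fdes$ through the color of $\pi_1$, and one must confirm that the ``adjacent and same color'' condition is simultaneously weak enough that $\phi$ still preserves all the statistics and strong enough that the fixed points remain parametrized by $G_{r,n}\times\{0,1\}^n$. The remaining steps are routine identifications and a geometric summation.
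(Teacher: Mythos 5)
Your proposal is correct and follows essentially the same route as the paper: the same Wachs-style sign-reversing involution (swap the values $2i-1,2i$ with colors fixed to positions), the same parametrization of fixed points by $G_{r,n}\times\{0,1\}^n$ (the paper's ``hatted'' permutations), the same statistic bookkeeping $\fdes(\pi)=\fdes(\tau)+r|\mathbf f|$, $\fmaj(\pi)=2\fmaj(\tau)+r\sum_{f_k=1}(2k-1)$, and the same final use of $\fmaj\equiv\col\pmod r$ with $\omega^r=1$. The only cosmetic difference is that you compute the sign via $(-1)^{\inv(|\pi|)}$ (the classical form of $\chi_{1,b}$) rather than through the length formula, and you prove the stated unrefined identity while the paper establishes the refinement to color-restricted sets with variables $x_i$; both are fine.
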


As $\B_n=G_{2,n}$, the corresponding results for Coxeter group of type $B_n$ can be derived by plugging $r=2$ into above as
\begin{align*}
\sum_{\pi\in\B_{2n}}(-1)^{\ell_B(\pi)}t^{\fdes(\pi)}q^{\fmaj(\pi)}x^{\nega(\pi)} = \prod_{i=1}^{n}(1-t^2q^{4i-2}) \sum_{\pi\in\B_{n}}t^{\fdes(\pi)}q^{2\fmaj(\pi)}x^{2\nega(\pi)},
\end{align*}
and
\begin{align*}
\sum_{\pi\in\B_{2n}}(-1)^{\inv(|\pi|)}t^{\fdes(\pi)}q^{\fmaj(\pi)}x^{\nega(\pi)} = \prod_{i=1}^{n}(1-t^2q^{4i-2}) \sum_{\pi\in\B_{n}}t^{\fdes(\pi)}q^{2\fmaj(\pi)}x^{2\nega(\pi)},
\end{align*}
which generalize \eqref{eq:signedEM-A} and \eqref{eq:signedM-B}.
Here, $|\pi|=|\pi_1||\pi_2|\cdots|\pi_n|$.
Analogous result for Coxeter group of type $D_n$ is obtained in Theorem~\ref{thm:signEM-D}.

Note that we actually derive a more refined version of the above, stated in Theorem~\ref{thm:main-even}, which is a refinement to \emph{colored restricted subsets}.
The case for $\chi_{0,b}$, $0\leq b\leq r-1$, is addressed in Theorem~\ref{thm:main-odd} as well.

For a natural extension of \eqref{eq:signedE-A-odd}, we also consider the signed Eulerian polynomials for $\B_{2n+1}$ and $\D_{2n+1}$, and derive some neat identities.
We highlight the following results.

\begin{thm}[Theorem~\ref{thm:B-odd-absinv} and Theorem~\ref{thm:B-odd-length}]
Let $n$ be a positive integer. We have
\begin{align*}
\sum_{\pi\in\B_{2n+1}}(-1)^{\inv(|\pi|)}t^{\fdes(\pi)} = (1-t^2)^n \sum_{\pi\in\B_{n+1}}t^{\fdes(\pi)},
\end{align*}
and
\begin{align*}
\sum_{\pi\in\B_{2n+1}}(-1)^{\ell_B(\pi)}t^{\fdes(\pi)} = (1-t^2)^n(1-t)\sum_{\pi\in\B_n}t^{2\des_B(\pi)}.
\end{align*}
\end{thm}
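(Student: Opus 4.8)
The plan is to prove both identities of the highlighted theorem by reducing the signed enumeration over $\B_{2n+1}$ to a sign-reversing involution argument of Wachs type, in the spirit of the involution used to prove \eqref{eq:signedEM-A} and its $B_n$-analogue \eqref{eq:signedM-B}. I would set up the involution on the \emph{unsigned part} $|\pi|$ first: write $\pi\in\B_{2n+1}$ as a pair consisting of the word $|\pi|=|\pi_1|\cdots|\pi_{2n+1}|\in\mathfrak{S}_{2n+1}$ (relative to $\{1,\dots,2n+1\}$) together with a sign vector in $\{\pm\}^{2n+1}$. The key observation is that $\fdes(\pi)=2\des_F(\pi)+\delta(\pi_1<0)$ depends on the signs and the relative order, while $(-1)^{\inv(|\pi|)}$ depends only on $|\pi|$, and $(-1)^{\ell_B(\pi)}=(-1)^{\inv(\pi)}(-1)^{\sum_{i\in\Nega(\pi)}\pi_i}=(-1)^{\inv(|\pi|)}\cdot(-1)^{(\text{sign contribution})}$ after untangling how switching a sign changes $\inv$. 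Because $2n+1$ is odd, exactly one ``middle'' slot will play a distinguished role, which is the source of the asymmetry between the two right-hand sides (a bare $(1-t^2)^n$ in the first identity versus the extra factor $(1-t)$ times $\sum_{\B_n}t^{2\des_B}$ in the second).

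For the first identity, the cleanest route is: pair up the positions $1,2,\dots,2n+1$ as $\{2,3\},\{4,5\},\dots,\{2n,2n+1\}$ leaving position $1$ unpaired, and define an involution that acts on whichever pair $\{2j,2j+1\}$ is ``active'' (the smallest such that a local swap-and-sign-flip move changes the sign $(-1)^{\inv(|\pi|)}$ but preserves $\fdes$). Fixed points of this involution are exactly the elements where each pair $\{|\pi_{2j}|,|\pi_{2j+1}|\}$ and its signs are in a canonical configuration; summing $t^{\fdes}$ over the $\pm$-choices in each inert pair contributes the factor $(1-t^2)$ per pair (one pair contributing $1$ and one contributing $-t^2$, as in Wachs' original computation), the unpaired first position plus the surviving relative order gives a copy of $\sum_{\pi\in\B_{n+1}}t^{\fdes(\pi)}$. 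I would verify the bookkeeping by tracking how $\des_F$ and the sign of $\pi_1$ transform under the local move: the move must flip signs in a way that either leaves $\fdes$ fixed or changes it by an even amount compensated by the $t^2$ factor. The main obstacle here is engineering the move so that it \emph{simultaneously} reverses $(-1)^{\inv(|\pi|)}$ and controls $\fdes$ exactly; this is precisely the generalization of Wachs' bijection alluded to in the abstract, and getting the ``active pair'' selection rule to be a genuine involution (well-defined, order-two, with the claimed fixed-point set) is the delicate part.

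For the second identity, I would first rewrite $(-1)^{\ell_B(\pi)}$ in terms of $(-1)^{\inv(|\pi|)}$ and the sign data: a sign flip at position $i$ changes $\ell_B$ by an amount whose parity is $\#\{j\ne i:\,|\pi_j|<|\pi_i|\}+|\pi_i|$, i.e. by the parity of the \emph{position} that the value $|\pi_i|$ would occupy, which after simplification reduces $(-1)^{\ell_B(\pi)}$ to $(-1)^{\inv(|\pi|)}\prod_{i\in\Nega(\pi)}(-1)^{|\pi_i|}$ or a similarly explicit product. Then the same pairing involution as above is applied, but now the sum over the $\pm$-choices in each inert pair is weighted by these extra signs; generically this still collapses to $(1-t^2)$ per pair for $n$ of the pairs, while the distinguished first position no longer freely contributes but instead, after the involution kills the sign, leaves behind a $\B_n$-indexed sum in which $\fdes$ degenerates to $2\des_B$ and an extra isolated factor $(1-t)$ appears from the single remaining free $\pm$-choice (the one attached to $\pi_1$, whose sign toggles $\fdes$ by exactly $1$, giving $1-t$). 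I expect the hard part of this second identity to be confirming that the residual sum is genuinely $\sum_{\pi\in\B_n}t^{2\des_B(\pi)}$ and not some other $B_n$ statistic — this requires checking that under the reduction $\fdes$ on the surviving relative-order data matches $2\des_B$ on the contracted signed permutation, which I would do by a direct comparison of the two descent statistics on a small explicit model before asserting it in general.

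In summary, both identities should follow from a single combinatorial engine — a sign-reversing, $\fdes$-controlling involution on $\B_{2n+1}$ built by pairing consecutive non-initial positions — with the two right-hand sides distinguished only by how the extra sign weight $(-1)^{\ell_B}$ versus $(-1)^{\inv(|\pi|)}$ interacts with the single unpaired initial position. I would present the involution once, prove it is order two and sign-reversing, identify its fixed-point set, and then read off each identity by specializing the weight; the genuinely new content over \cite{Wachs_92} and \cite{Biagioli_06} is handling the odd cardinality $2n+1$ and the flag statistics simultaneously, which is exactly where I would concentrate the verification effort.
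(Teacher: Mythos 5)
Your plan leaves the two places where the real work lies essentially unproved, and in the second identity the mechanism you describe is not the one that can work. For the first identity, the engine is never constructed: you pair \emph{positions} $\{2,3\},\{4,5\},\dots,\{2n,2n+1\}$ and appeal to an unspecified ``swap-and-sign-flip'' move on the active pair, conceding yourself that making this a well-defined involution with the right fixed points is the delicate part. The construction that actually works (and is the one used in the paper, following Wachs) pairs \emph{values}: swap the letters $2i-1$ and $2i$ for the smallest $i$ such that they have opposite signs or are not adjacent, keeping the signs attached to the positions; this manifestly preserves $\Des_F$, hence $\fdes$, and flips $(-1)^{\inv(|\pi|)}$, and its fixed points (each pair $2i-1,2i$ adjacent with equal signs, the letter $\pm(2n+1)$ free) fold onto hatted elements of $\B_{n+1}$, producing $(1-t^2)^n\sum_{\B_{n+1}}t^{\fdes}$. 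Until you exhibit your position-based move and verify it is an involution, sign-reversing, and $\fdes$-preserving with the claimed fixed-point set, the first identity is not proved. A smaller but real slip: $(-1)^{\ell_B(\pi)}$ equals $(-1)^{\inv(|\pi|)}(-1)^{\nega(\pi)}$, not $(-1)^{\inv(|\pi|)}\prod_{i\in\Nega(\pi)}(-1)^{|\pi_i|}$; flipping the sign of the letter $m$ changes $\ell_B$ by $2m-1$, so each negative letter contributes parity $1$, not parity $m$.

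The genuine gap is in the second identity. After the involution and the folding, the residual signed sum is $(1-t^2)^n\sum_{\pi'\in\B_{n+1}}(-1)^{\sgm(\pi')}t^{\fdes(\pi')}$, where the leftover sign is attached to the \emph{maximal} letter $n+1$ of the folded permutation (because $\ell_B$, unlike $\inv(|\pi|)$, sees where $\pm(2n+1)$ sits and with which sign), not to $\pi_1$. Your claim that the free $\pm$-choice at $\pi_1$ ``toggles $\fdes$ by exactly $1$, giving $1-t$'' is false as stated — changing the sign of $\pi_1$ alters $\delta(\pi_1<0)$ but can also alter the flag comparison of $\pi_1$ with $\pi_2$ — and there is no contraction under which $\fdes$ on the residual data equals $2\des_B$ termwise; the change of statistic happens only at the level of generating functions. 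What is needed is precisely the nontrivial identity $\sum_{\pi\in\B_{n+1}}(-1)^{\sgm(\pi)}t^{\fdes(\pi)}=(1-t)\sum_{\pi\in\B_n}t^{2\des_B(\pi)}$ (Lemma~\ref{lem:lin}), which the paper proves by a separate argument with flag barred permutations, comparing $\sum_{k\geq0}(k+1)^n\lceil(k+1)/2\rceil t^k$ and $\sum_{k\geq0}(k+1)^n\lfloor(k+1)/2\rfloor t^k$ and invoking Brenti's formula $\sum_{\pi\in\B_n}t^{\des_B(\pi)}/(1-t)^{n+1}=\sum_{k\geq0}(2k+1)^nt^k$. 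Your proposal assumes this step can be read off from the involution's bookkeeping; it cannot, and ``checking on a small explicit model'' will not substitute for a proof of this lemma.
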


\begin{thm}[Theorem~\ref{thm:D-odd-length}]
For $\pi\in\D_n$, let $\ddes(\pi) := \fdes(\pi_1\pi_2\cdots\pi_{n-1}|\pi_n|)$.
Then, for any positive integer $n$, we have
\begin{align*}
\sum_{\pi\in\D_{2n+1}}(-1)^{\ell_D(\pi)}t^{\ddes(\pi)} = (1-t^2)^n\sum_{\pi\in\D_{n+1}}t^{\ddes(\pi)}.
\end{align*}
\end{thm}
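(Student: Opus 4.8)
The plan is, first, to replace the sign by a more tractable character, and then to run a version of Wachs' involution adapted to $\D_{2n+1}$ and to the statistic $\ddes$.

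\textbf{Sign reduction.} I would start by observing that $\pi\mapsto(-1)^{\ell_D(\pi)}$ and $\pi\mapsto(-1)^{\inv(|\pi|)}$ are both linear characters of $\D_{2n+1}$ --- the second because $\pi\mapsto|\pi|=|\pi_1||\pi_2|\cdots|\pi_{2n+1}|$ is a surjective group homomorphism $\D_{2n+1}\to\mathfrak{S}_{2n+1}$ and $\inv$ is the length function there. They agree on every Coxeter generator: on $s_i=(i,i+1)$ because $|s_i|=s_i$ has a single inversion, and on the type-$D$ generator $s_0^D$ because $|s_0^D|=(1,2)$ again has a single inversion. Hence they coincide, so the left-hand side equals $\sum_{\pi\in\D_{2n+1}}(-1)^{\inv(|\pi|)}t^{\ddes(\pi)}$, with $|\pi|\in\mathfrak{S}_{2n+1}$.

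\textbf{The involution.} Next I would partition $\{1,\dots,2n+1\}$ into the $n$ blocks $\{2i-1,2i\}$, $1\le i\le n$, plus the singleton $\{2n+1\}$. In the window of $\pi$ the block $\{2i-1,2i\}$ fills two entries, carrying some signs and in some relative order. Take the largest $i$ for which those two entries occupy \emph{non-adjacent} positions $p<q$ (that is, $q>p+1$), and let $\Psi(\pi)$ be obtained by interchanging the \emph{absolute values} $2i-1$ and $2i$ between positions $p$ and $q$ while leaving in place the sign present at each of those two positions; if no such $i$ exists, put $\Psi(\pi)=\pi$. Then $\Psi$ is an involution of the set of non-fixed points; it reverses $(-1)^{\inv(|\pi|)}$, since transposing two consecutive integers inside $|\pi|$ changes $\inv(|\pi|)$ by $\pm1$; and it preserves $\ddes(\pi)=\fdes(\pi_1\cdots\pi_{2n}|\pi_{2n+1}|)$, because $2i-1$ and $2i$ are consecutive inside each half of the order $\bar1<\cdots<\overline{2n+1}<1<\cdots<2n+1$ while $p$ and $q$ are non-adjacent with untouched signs, so none of the at-most-four descents of $\pi_1\cdots\pi_{2n}|\pi_{2n+1}|$ near $p$ and $q$, nor the term $\delta(\pi_1<0)$, changes --- and this stays true when $q=2n+1$, where $\ddes$ only sees $|\pi_{2n+1}|$. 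Since $\Psi$ also preserves $\nega(\pi)$ it is a self-map of $\D_{2n+1}$, so all non-fixed points cancel in the signed sum.

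\textbf{Fixed points.} A fixed point is a \emph{domino configuration}: the entry of absolute value $2n+1$ sits at some position $k$, necessarily odd so that the complementary positions split (uniquely) into $n$ consecutive pairs --- \emph{dominoes} --- the $i$-th carrying the block $\{2i-1,2i\}$ in one of two orders and with two arbitrary signs, subject only to $\nega$ being even. I would then contract each domino to a single block (its absolute-value pair becoming the label $i$, and $2n+1$ becoming $n+1$), and show that summing the sign-weighted $t^{\ddes}$-contribution over the eight order-and-sign choices within each domino, together with the two possible signs of the singleton entry, factors as $(1-t^2)^n$ times $\sum_{\rho\in\D_{n+1}}t^{\ddes(\rho)}$ --- the exponent $2$ in $1-t^2$ coming from the fact that each domino-internal descent is worth $2$ in $\fdes$, and the residual sign being trivial because the inter-block inversions of $|\pi|$ always occur in even number.

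\textbf{The main obstacle.} The sign reduction and the verification that $\Psi$ is a well-defined, sign-reversing, $\ddes$-preserving involution are routine; the technical heart is the fixed-point computation. Unlike in the symmetric-group case, the \emph{inter-block} descents of $\fdes$ and the bit $\delta(\pi_1<0)$ can a priori depend on the signs and orders chosen inside the dominoes, because once signs are allowed the last entry of one block versus the first of the next is no longer determined by the block labels alone. Making the per-domino factorization precise therefore requires either careful bookkeeping --- exploiting that $2i-1$ and $2i$ straddle no other value within a sign block, so that every such dependence telescopes away --- or an auxiliary sign-reversing, $\ddes$-preserving pairing on the fixed-point set that first removes the offending configurations. (An alternative would be to transfer everything to type $B$ along the bijection $\pi\mapsto(\pi_1,\dots,\pi_{2n},|\pi_{2n+1}|)$ from $\D_m$ onto $\{\sigma\in\B_m:\sigma_m>0\}$, which carries $\ddes$ to $\fdes$ and $(-1)^{\inv(|\cdot|)}$ to itself, and then to invoke the colored-restricted-subset refinement of Theorem~\ref{thm:B-odd-absinv}; the remaining point there is to check that $\{\sigma_m>0\}$ is an admissible subset for that refinement.)
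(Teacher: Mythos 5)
Your reduction of the sign is correct: $(-1)^{\ell_D}$ and $(-1)^{\inv(|\cdot|)}$ are both linear characters of $\D_{2n+1}$ and agree on the generators $s_0'=(\bar1,2),s_1,\dots,s_{2n}$, hence coincide (equivalently, $\D_n$ has only two linear characters and $(-1)^{\inv(|\cdot|)}$ is nontrivial). Your involution $\Psi$ is also sound: it preserves $\nega$, hence maps $\D_{2n+1}$ to itself, flips $(-1)^{\inv(|\pi|)}$ by exactly one inversion, and preserves $\ddes$ because $2i-1$ and $2i$ are adjacent within each half of the flag order and the signs stay at their positions. (One slip in your description of the fixed points: the blocks $\{2i-1,2i\}$ may occupy the $n$ dominoes in an arbitrary order, not block $i$ in the $i$-th domino; this matters for the contraction step.)

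The genuine gap is that the fixed-point evaluation, which is the entire content of the theorem, is only asserted. Because $\Psi$ swaps only non-adjacent blocks, your fixed-point set still contains mixed-sign dominoes and dominoes with negative letters at the last two positions; for these the inter-domino flag descents and the bit $\delta(\pi_1<0)$ depend on the internal sign choices, and the even-$\nega$ condition couples all dominoes and the singleton, so the sum does not factor domino by domino into $(1-t^2)$ without a further sign-reversing pairing and a careful correspondence with $\D_{n+1}$. This is exactly what the paper's involution $\iota$ is engineered to do: conditions (B2)--(B4) eliminate the mixed-sign dominoes and the bad configurations at the last two positions \emph{before} contracting, and even then the proof requires the hatted bijection $\phi$ with a parity-fixing rule on the last letter and a two-case computation of $\ell_D$ modulo $2$ (the cases $\pi_{2n}<|\pi_{2n+1}|$ and $\pi_{2n}>|\pi_{2n+1}|$) together with $\ddes(\pi)=\ddes(\pi')+2|\L(\pi')|$. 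None of this bookkeeping appears in your outline, so the crux is missing. By contrast, your parenthetical alternative does close the argument and is genuinely different from the paper's route for this theorem: $\pi\mapsto\pi_1\cdots\pi_{2n}|\pi_{2n+1}|$ is a bijection from $\D_{2n+1}$ onto $\B_{2n+1}(\H)$ with $\H=(\pm,\dots,\pm,+)$, preserving $(-1)^{\inv(|\cdot|)}$ and carrying $\ddes$ to $\fdes$; Theorem~\ref{thm:B-odd-absinv} applies to this $\H$ (every nonempty $H_i$ is admissible), the union $\biguplus_{\S_k\in\mathbb{F}(\H)}\B_{n+1}(\S_k)$ is precisely $\{\sigma\in\B_{n+1}:\sigma_{n+1}>0\}$, and the same last-letter bijection identifies $\sum_{\sigma_{n+1}>0}t^{\fdes(\sigma)}$ with $\sum_{\rho\in\D_{n+1}}t^{\ddes(\rho)}$. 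Had you carried out that transfer in full, you would have a correct and shorter proof; as submitted, the main route stops short of the decisive computation.
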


Note again that a refined version of the first identity of Main Theorem 2 is given in Theorem~\ref{thm:B-odd-absinv}.

Another aspect in this paper is to consider the signed polynomial \eqref{eq:signed-single} for $\mathcal{W}=\B_n$ and $\D_n$, and $\stat_1$ and $\stat_2$ are chosen as type $B/D$ descent number and major index, defined by Bj\"{o}rner and Brenti~\cite{BB_05}, respectively.
We do actually obtain some nice closed forms; see Theorem~\ref{thm:B-GF-length} and Theorem~\ref{thm:B-GF-absinv} for $\B_n$, and Theorem~\ref{thm:D-GF} for $\D_n$.

\medskip

The rest of this paper is organized as follows.
The proof of Main Theorem 1 is given in Section~\ref{sec:Grn}.
Some new signed Eulerian and signed Euler-Mahonian identities for Coxeter groups of types $B_n$ and $D_n$ are collected in Section~\ref{sec:Bn} and Section~\ref{sec:Dn}, respectively.
In Section~\ref{sec:sign-balance} we derive the generating functions of the signed polynomials for $\B_n$ and $\D_n$.
Finally, a brief conclusion is in Section~\ref{sec:conclusion}.


%
%
%
%

\section{Extensions to complex reflection group $G_{r,n}$}\label{sec:Grn}

\subsection{Colored permutations}\label{sec:Grn-def}

Let $r,n$ be positive integers.
The group of \emph{colored permutations} $G_{r,n}$ of $n$ letters with $r$ colors is the \emph{wreath product} $G_{r,n}:=\mathbb{Z}_r\wr\mathfrak{S}_n$ of the cyclic group $\mathbb{Z}_r(:=\mathbb{Z}/r\mathbb{Z})$ with $\mathfrak{S}_n$.
$G_{r,n}$ consists of ordered pairs $(\sigma,\z)$, where $\sigma=\sigma_1\sigma_2\cdots\sigma_n\in\mathfrak{S}_n$ and $\z=(z_1,z_2,\ldots,z_n)$ is an $n$-tuple of integers (or, \emph{colors}) with $z_i\in\mathbb{Z}_r$.
An element $(\sigma,\z)\in G_{r,n}$ can be represented as a word $$\sigma_1^{[z_1]}\sigma_2^{[z_2]}\cdots\sigma_n^{[z_n]},$$ in which the superscript $[t]$ is omitted when $t=0$.
It is called the \emph{window notation} of  $(\sigma,\z)$.
For example, $(513426,(0,1,0,2,1,3))\in G_{4,6}$ can be represented as $51^{[1]}34^{[2]}2^{[1]}6^{[3]}$.
It is clear that $G_{1,n}=\mathfrak{S}_n$ and $G_{2,n}=\B_n$ by viewing the letters with color label $1$ as negative numbers.

The group $G_{r,n}$ can be generated by the set of generators $$\mathcal{S}_n=\{s_0,s_1,\ldots,s_{n-1}\},$$
where $s_i=(i,i+1)$ for $1\leq i\leq n-1$, while $s_0$ is the action of adding the color label on the first letter by one (note that the color label is taken module $r$).
The generators are subject to the defining relations
$$
\begin{cases}
s_0^r=1, \\
s_i^2=1 \text{ for } 1\leq i\leq n-1, \\
(s_is_j)^2=1 \text{ for } |i-j|>1, \\
(s_is_{i+1})^3=1 \text{ for } 1\leq i\leq n-2, \\
(s_0s_1)^{2r} = 1.
\end{cases}
$$
Note that $12\cdots n$ is the identity.

The \emph{length} $\ell(\pi)$ of $\pi=(\sigma,\z)\in G_{r,n}$ is defined to be the minimal number of generators in $G_{r,n}$ needed to express it.
One of the combinatorial interpretations of $\ell(\pi)$ was proposed by Bagno~\cite{Bagno_04} by
\begin{equation}\label{eq:length}
\ell(\pi)=\inv(\pi)+\sum_{z_i>0}(\sigma_i+z_i-1),
\end{equation}
where $\inv(\pi)$ is the number of inversions of $\pi$ with respect to the linear order
\begin{equation}\label{eq:order_L}
n^{[r-1]}<\cdots<n^{[1]}<\cdots<1^{[r-1]}<\cdots<1^{[1]}<1<\cdots<n.
\end{equation}
For $\pi=(\sigma,\z)\in\G_{r,n}$ let $\col(\pi):=\sum_{i=1}^{n}z_i$.
For example, let $\pi=51^{[1]}34^{[2]}2^{[1]}6^{[3]}\in G_{4,6}$.
Then $\ell(\pi)=13+(1+5+2+8)=29$ and $\col(\pi)=7$.

\subsection{Restricted colored permutations}\label{sec:Grn-restricted}

Denote by $\S=(S_1,S_2,\ldots,S_n)$ the $n$-tuple of non-empty subsets with $S_i\subseteq\mathbb{Z}_r$ and by $\mathfrak{R}_{r,n}$ the collection of all such $n$-tuples.
The set of \emph{$\S$-restricted colored permutations}, denoted by $G_{r,n}(\S)$, is defined by
\begin{align*}
G_{r,n}(\S) := \{\pi=(\sigma,\z):\,z_i\in S_i \text{ for }i\in[n]\}.
\end{align*}
For example, $$G_{4,2}(\{0,2\},\{2,3\}) = \{12^{[2]},12^{[3]},1^{[2]}2^{[2]},1^{[2]}2^{[3]},21^{[2]},21^{[3]},2^{[2]}1^{[3]},2^{[2]}1^{[3]}\}.$$
Note that if $\emptyset$ appears as an element in $\S$, then $G_{r,n}(\S)=\emptyset$.

Let $\S\in\mathfrak{R}_{r,n}$ and $\H\in\mathfrak{R}_{r,2n}$.
We say $\S\prec\H$ if
\begin{equation}\label{eq:S_iH_i}
S_i = H_{2i-1}\cap H_{2i}, \quad \text{for }i\in[n].
\end{equation}
For example, if $\S=(\{0,2\},\{2,3\})\in\mathfrak{R}_{4,2}$ and $\H=(\{0,1,2,3\},\{0,2\},\{2,3\},\{0,2,3\})$, $\H'=(\{0,1,2,3\},\{0,2\},\{2,3\},\{0,1,3\})\in\mathfrak{R}_{4,4}$, then $\S\prec\H$ but $\S\nprec\H'$.

By \eqref{eq:S_iH_i} we have that, for $\H\in\mathfrak{R}_{r,2n}$, there is unique $\S\in\mathfrak{R}_{r,n}$ such that $\S\prec\H$.


\subsection{Signed Eulerian-Mahonian identities}\label{sec:Grn-main}

The $2r$ one-dimensional characters of $G_{r,n}$ are characterized in terms of the length function as follows.

\begin{theorem}[\cite{CHFLL}]\label{thm:1-dm}
For any positive integers $r$ and $n$, $G_{r,n}$ has $2r$ one-dimensional characters
\begin{equation}\label{eq:1-dim}
\chi_{a,b}(\pi)=(-1)^{a\left(\ell(\pi)-\col(\pi)\right)}\omega^{b\,\col(\pi)},
\end{equation}
where $\omega$ is a primitive $r$th root of $1$, $a=0,1$ and $b=0,1,\ldots,r-1$.
\end{theorem}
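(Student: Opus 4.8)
The plan is to show that the list in \eqref{eq:1-dim} exhausts all one-dimensional characters by a direct computation on the generators $\mathcal{S}_n=\{s_0,s_1,\ldots,s_{n-1}\}$, using the presentation of $G_{r,n}$ recorded above. Any one-dimensional character $\chi$ is a group homomorphism $G_{r,n}\to\mathbb{C}^\times$, hence is determined by its values $\chi(s_i)$, which must satisfy the images of the defining relations. First I would record the constraints: $\chi(s_0)^r=1$ forces $\chi(s_0)=\omega^b$ for some primitive $r$th root $\omega$ and some $b\in\{0,1,\ldots,r-1\}$; $\chi(s_i)^2=1$ for $1\le i\le n-1$ forces $\chi(s_i)=\pm1$; the braid relations $(s_is_{i+1})^3=1$ for $1\le i\le n-2$ force $\chi(s_i)=\chi(s_{i+1})$, so all of $\chi(s_1),\ldots,\chi(s_{n-1})$ share a common value $(-1)^a$ with $a\in\{0,1\}$; and the commuting relations $(s_is_j)^2=1$ and the mixed relation $(s_0s_1)^{2r}=1$ impose no further restriction (the latter reads $\omega^{2rb}\cdot(-1)^{2ra}=1$, which holds automatically). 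This yields at most $2r$ characters, parametrized by the pair $(a,b)$.

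Next I would check that each such choice actually extends to a character and that the resulting homomorphism agrees with the stated closed form \eqref{eq:1-dim}. Since the $2r$ candidate assignments on generators all respect the defining relations (as verified above), each extends uniquely to a homomorphism $\chi_{a,b}$ on $G_{r,n}$; conversely the two references to distinct $(a,b)$ give distinct characters since they already differ on the generators. It remains to confirm the formula $\chi_{a,b}(\pi)=(-1)^{a(\ell(\pi)-\col(\pi))}\omega^{b\,\col(\pi)}$. For this I would evaluate the right-hand side on each generator using Bagno's length formula \eqref{eq:length}: one has $\ell(s_i)=1$ and $\col(s_i)=0$ for $1\le i\le n-1$, while $\ell(s_0)=1$ and $\col(s_0)=1$ (acting on the identity, $s_0$ has one color unit and zero inversions), so the right-hand side gives $(-1)^a$ on $s_i$ and $\omega^b$ on $s_0$, matching. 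Finally I would note that both sides are multiplicative: the left-hand side by definition, and the right-hand side because both $\ell(\pi)-\col(\pi)\pmod 2$ and $\col(\pi)\pmod r$ are additive under the group operation — the former is (essentially) the length of the underlying signed/colored permutation modulo $2$ reading off inversion parity, the latter is visibly additive in the color tuple. Hence the two sides agree on a generating set and are both homomorphisms, so they coincide on $G_{r,n}$.

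The main obstacle is the last multiplicativity point: one must argue carefully that the function $\pi\mapsto(-1)^{\ell(\pi)-\col(\pi)}$ is itself a homomorphism, equivalently that $\ell(\pi)+\ell(\pi')\equiv\ell(\pi\pi')\pmod 2$ after correcting by colors, since $\ell$ is certainly not additive on the nose. The cleanest route is to observe that $(-1)^{\ell(\pi)-\col(\pi)}$ equals $(-1)^{\inv(\sigma)}$ — the sign of the underlying permutation $\sigma\in\mathfrak{S}_n$ obtained by forgetting colors — because in \eqref{eq:length} the correction $\sum_{z_i>0}(\sigma_i+z_i-1)$ has the same parity as $\sum_{z_i>0}(\sigma_i-1)=\sum_{z_i>0}\sigma_i-\col^{+}$, and one checks via the definition of $\inv$ with respect to the order \eqref{eq:order_L} that $\inv(\pi)$ differs from $\inv(\sigma)$ by exactly this quantity modulo $2$; then multiplicativity of the sign on $\mathfrak{S}_n$ and of $\col$ modulo $r$ finishes the proof. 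This reduction also makes transparent why exactly $2r$ characters arise. (If one prefers to avoid even this computation, one can instead cite the abelianization $G_{r,n}^{\mathrm{ab}}\cong\mathbb{Z}_2\times\mathbb{Z}_r$ for $n\ge2$, and $\mathbb{Z}_r$ for $n=1$, which follows immediately from the presentation and directly yields the count and the parametrization.)
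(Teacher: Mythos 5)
The paper never proves Theorem~\ref{thm:1-dm}: it is imported from \cite{CHFLL}, and the surrounding text only remarks that \eqref{eq:1-dim} matches the classical form \eqref{eq:1-dim-old}, again deferring the comparison to \cite{CHFLL}. So there is no internal proof to compare against; judged on its own, your argument is correct and is the natural one: the presentation bounds the number of one-dimensional characters by $2r$ (equivalently $G_{r,n}^{\mathrm{ab}}\cong\mathbb{Z}_2\times\mathbb{Z}_r$ for $n\ge2$), every admissible assignment on generators extends to a character, and the closed form is confirmed by evaluating on generators and checking multiplicativity. You correctly isolate the only substantive point, namely that $\pi\mapsto(-1)^{\ell(\pi)-\col(\pi)}$ is a homomorphism, and your reduction is the right one: since $\sum_{z_i>0}(\sigma_i+z_i-1)=\col(\pi)+\sum_{z_i>0}(\sigma_i-1)$, Bagno's formula \eqref{eq:length} gives $\ell(\pi)-\col(\pi)=\inv(\pi)+\sum_{z_i>0}(\sigma_i-1)$, and an easy induction on the number of colored letters (coloring one further letter of value $v$ changes $\inv(\pi)$, computed in the order \eqref{eq:order_L}, by $v-1$ modulo $2$) yields $\ell(\pi)-\col(\pi)\equiv\inv(|\pi|)\pmod 2$; then multiplicativity follows because $\pi\mapsto|\pi|$ and $\pi\mapsto\col(\pi)\bmod r$ are homomorphisms. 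This parity identity is exactly the ``comparison of the two formulae'' the paper attributes to \cite{CHFLL}, so your route recovers the missing ingredient rather than bypassing it. Two small points to tighten: you assert the parity identity with ``one checks,'' and spelling out the induction above is what actually closes the argument; also the sentence claiming the correction term $\sum_{z_i>0}(\sigma_i+z_i-1)$ ``has the same parity as'' $\sum_{z_i>0}(\sigma_i-1)$ is off by $\col(\pi)$ as written -- it is $\ell(\pi)-\col(\pi)$, not $\ell(\pi)$, that has this parity. Finally, for $n=1$ one has $\ell(\pi)=\col(\pi)$, so $\chi_{1,b}=\chi_{0,b}$ and the $2r$ characters are not distinct; this degeneracy is inherent in the cited statement and is already covered by your abelianization remark.
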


It should be emphasized that the one-dimensional characters of $G_{r,n}$ are well known, studied first by Schur and Specht, see e.g.~\cite{Kerber_71, Macdonald_95}, as the form:
\begin{equation}\label{eq:1-dim-old}
\chi_{a,b}(\pi)=(-1)^{a\,\inv(|\pi|)}\omega^{b\,\col(\pi)},
\end{equation}
where $|\pi|=\sigma$.
The two expressions in \eqref{eq:1-dim} and \eqref{eq:1-dim-old} are similar, while the former one is more helpful to derive our first main result.
See~\cite{CHFLL} for a simple comparison of these two formulae.
Note that a generalization of the classical one~\eqref{eq:1-dim-old} can be found in the study of signed Mahonian polynomials over projective reflection groups~\cite{BC_12}.

For $\pi\in G_{r,n}$ let $$\Des_F(\pi):=\{i\in[n-1]:\,\pi_i>\pi_{i+1}\}$$
with respect to the linear order
\begin{equation}\label{eq:flag-order}
1^{[r-1]}<\cdots<n^{[r-1]}<\cdots<1^{[1]}<\cdots<n^{[1]}<1<\cdots<n.
\end{equation}
Adin and Roichman~\cite{Adin_Roichman_01} defined the \emph{flag descent number} $\fdes$ and the \emph{flag major index} $\fmaj$ by
\begin{align*}
\fdes(\pi) &:= r\cdot\des_F(\pi)+z_1, \\
\fmaj(\pi) &:= r\cdot\maj_F(\pi)+\col(\pi),
\end{align*}
where $\des_F(\pi)=|\Des_F(\pi)|$ and $$\maj_F(\pi):=\sum_{i\in\Des_F(\pi)}i.$$
For example, let $\pi=4^{[2]}51^{[1]}32^{[1]}6^{[3]}\in G_{4,6}$.
Then $\Des_F(\pi)=\{2,4,5\}$, $\fdes(\pi)=4\cdot 3+2=14$ and $\fmaj=4\cdot 11+7=51$.

For a given $\H\in\mathfrak{R}_{r,2n}$, define a map $\varphi:G_{r,2n}(\H)\to G_{r,2n}(\H)$ as follows.
Let $i\in[n]$ be the smallest integer such that $2i-1$ and $2i$ have different colors or are not in adjacent positions in the window notation of $\pi$.
Then, let $\varphi(\pi)$ be the a colored permutation obtained from $\pi$ by swapping the two letters $2i-1$ and $2i$.
If no such $i$ exists let $\varphi(\pi)=\pi$.
For example, in $G_{4,6}(\H)$ for some $\H$, $\varphi(51^{[1]}34^{[2]}2^{[1]}6^{[3]}) = 52^{[1]}34^{[2]}1^{[1]}6^{[3]}$ since $1$ and $2$ are not in adjacent positions, $\varphi(52^{[1]}1^{[1]}6^{[3]}43^{[2]}) = 52^{[1]}1^{[1]}6^{[3]}34^{[2]}$ since $3$ and $4$ have different colors even though they are in adjacent positions, and $562^{[3]}1^{[3]}4^{[1]}3^{[1]}$ is fixed under $\varphi$.

It is obvious that the map $\varphi$ is well-defined and an involution on $G_{r,2n}(\H)$ whose fixed points (those $\pi$ such that $\varphi(\pi)=\pi$) are the colored permutations in which the letters $2i-1$ and $2i$ are adjacent and having the same color.
Moreover, assume the positions of the letters $2i-1$ and $2i$ are $2p-1$ and $2p$.
Then, the set of common colors is $H_{2p-1}\cap H_{2p}$.
Denote by $\mathcal{F}_{r,2n}(\H)$ the set of fixed points under $\varphi$.

\begin{lemma}\label{lem:fix-color-even}
Let $r$ and $n$ be two positive integers.
For integer $0\leq b\leq r-1$ and $\H\in\mathfrak{R}_{r,2n}$, we have
\begin{align*}
\sum_{\pi\in G_{r,2n}(\H)} \chi_{1,b}(\pi)t^{\fdes(\pi)}q^{\fmaj(\pi)}\prod_{i=1}^{2n}x_i^{z_i} = \sum_{\pi\in\mathcal{F}_{r,2n}(\H)} \chi_{1,b}(\pi)t^{\fdes(\pi)}q^{\fmaj(\pi)}\prod_{i=1}^{2n}x_i^{z_i}.
\end{align*}
\end{lemma}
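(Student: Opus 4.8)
\emph{Proof idea.} The plan is to use $\varphi$ as a sign-reversing involution. Since $\varphi$ is an involution on $G_{r,2n}(\H)$ with fixed-point set $\mathcal{F}_{r,2n}(\H)$, the set $G_{r,2n}(\H)\setminus\mathcal{F}_{r,2n}(\H)$ is partitioned into two-element orbits $\{\pi,\varphi(\pi)\}$; so it suffices to show that for each non-fixed point $\pi$ the monomials attached to $\pi$ and to $\varphi(\pi)$ agree while the characters are opposite. This reduces to three invariance statements, which I would establish in the following order.

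First I would observe that $\varphi$ only permutes the underlying letters of $\pi$ and leaves the colour tuple $\z$ untouched (in the window notation the colour travels with the \emph{position}, not with the letter, as the worked examples confirm). Consequently $z_j(\varphi(\pi))=z_j(\pi)$ for every $j$, so $\prod_i x_i^{z_i}$, $\col$ and in particular $z_1$ are all $\varphi$-invariant.

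Second — and this is the main obstacle — I would prove that $\Des_F(\varphi(\pi))=\Des_F(\pi)$. Let $i$ be the index defining $\varphi(\pi)$, and let $p,q$ be the positions of the letters $2i-1$ and $2i$ in $\pi$. Two elementary remarks about the flag order \eqref{eq:flag-order} drive the argument: (a) two coloured letters with \emph{distinct} colours are compared by their colours alone, independently of the underlying letter values; and (b) since $c\notin\{2i-1,2i\}$ forces $c\le 2i-2$ or $c\ge 2i+1$, the natural comparison of $c$ with $2i-1$ is the same as that of $c$ with $2i$. Now split according to the definition of $i$. If $2i-1$ and $2i$ are not in adjacent positions, then for every adjacency $\{j,j+1\}$ at most one of $j,j+1$ lies in $\{p,q\}$, the letter at the other position is $\ne 2i-1,2i$, and (a)/(b) show the descent status at $j$ is unchanged after the swap. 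If instead $2i-1$ and $2i$ sit in adjacent positions, say $p$ and $p+1$, then by the choice of $i$ they carry different colours, so the descent status at $p$ is decided by the colours of positions $p,p+1$, which $\varphi$ does not alter; the descents at $p-1$ and at $p+1$ again fall under (a)/(b) since the outer neighbours are $\ne 2i-1,2i$. In all cases $\Des_F$, hence $\des_F$ and $\maj_F$, hence (using the colour invariance from the first step) $\fdes=r\cdot\des_F+z_1$ and $\fmaj=r\cdot\maj_F+\col$ are $\varphi$-invariant, so the monomials $t^{\fdes(\pi)}q^{\fmaj(\pi)}\prod_i x_i^{z_i}$ attached to $\pi$ and $\varphi(\pi)$ are equal.

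Third, for the sign I would use the classical form \eqref{eq:1-dim-old}, $\chi_{1,b}(\pi)=(-1)^{\inv(|\pi|)}\omega^{b\col(\pi)}$. Passing from $\pi$ to $\varphi(\pi)$ replaces $|\pi|=\sigma$ by $(2i-1,2i)\circ\sigma$, i.e.\ it transposes the two entries of the one-line word of $\sigma$ equal to the consecutive values $2i-1$ and $2i$; such a transposition changes the number of inversions by exactly $\pm1$ (by $+1$ or $-1$ according as $2i-1$ precedes or follows $2i$ in $\sigma$). Hence $(-1)^{\inv(|\varphi(\pi)|)}=-(-1)^{\inv(|\pi|)}$, and since $\col$ is $\varphi$-invariant we get $\chi_{1,b}(\varphi(\pi))=-\chi_{1,b}(\pi)$ whenever $\varphi(\pi)\ne\pi$. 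Therefore the two terms indexed by such a pair cancel in the left-hand sum, leaving exactly the sum over $\mathcal{F}_{r,2n}(\H)$, as claimed.
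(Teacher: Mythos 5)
Your proposal is correct and takes essentially the same route as the paper: the same sign-reversing involution $\varphi$, with the non-fixed points cancelling in two-element orbits because the colour vector, $\Des_F$, $\fdes$ and $\fmaj$ are $\varphi$-invariant while the character flips sign. The only (harmless) differences are that you derive the sign reversal from the classical expression \eqref{eq:1-dim-old} via $\inv(|\pi|)$ changing by exactly $\pm 1$, whereas the paper argues from \eqref{eq:1-dim} using $\ell(\varphi(\pi))=\ell(\pi)\pm 1$, and that you spell out the $\Des_F$-invariance case analysis which the paper merely asserts.
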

\proof
Consider $\pi=(\sigma,\z)\in G_{r,2n}(\H)\backslash\mathcal{F}_{r,2n}(\H)$ which is not a fixed point under $\varphi$.
Let $\varphi(\pi)=\pi'=(\sigma',\z')$.
It is clear that $\z=\z'$, so
\begin{equation}\label{eq:phi-color}
\prod_{i=1}^{2n}x_i^{z_i} = \prod_{i=1}^{2n}x_i^{z_i'} \quad \text{and} \quad \col(\pi)=\col(\pi').
\end{equation}
It is also clear that $\ell(\pi)=\ell(\pi')\pm 1$ and thus
\begin{equation}\label{eq:phi-reverse}
\chi_{1,b}(\pi)=(-1)\chi_{1,b}(\pi')
\end{equation}
by the description of $\chi_{1,b}$ shown in \eqref{eq:1-dim}.
Furthermore, $\varphi$ preserves the set $\Des_F$ and hence does not change both $\fdes$ and $\fmaj$.
That is,
\begin{equation}\label{eq:phi-preserve}
\fdes(\pi)=\fdes(\pi') \quad \text{and} \quad \fmaj(\pi)=\fmaj(\pi').
\end{equation}
By \eqref{eq:phi-color}--\eqref{eq:phi-preserve} we have
\begin{align*}
\chi_{1,b}(\pi)t^{\fdes(\pi)}q^{\fmaj(\pi)}\prod_{i=1}^{2n}x_i^{z_i} = -\chi_{1,b}(\pi')t^{\fdes(\pi')}q^{\fmaj(\pi')}\prod_{i=1}^{2n}x_i^{z'_i},
\end{align*}
and thus the result follows.
\qed

Now we are ready to prove our first main result.

\begin{theorem}\label{thm:main-even}
Let $r$ and $n$ be two positive integers.
For integer $0\leq b\leq r-1$ and $\H\in\mathfrak{R}_{r,2n}$, we have
\begin{align}
\sum_{\pi\in G_{r,2n}(\H)} & \chi_{1,b}(\pi)t^{\fdes(\pi)}q^{\fmaj(\pi)}\prod_{i=1}^{2n}x_i^{z_i} \notag \\
& = \prod_{i=1}^{n}\left(1-t^rq^{r(2i-1)}\right)\sum_{\pi\in G_{r,n}(\S)}t^{\fdes(\pi)}(\omega^bq)^{2\fmaj(\pi)}\prod_{i=1}^{n}(x_{2i-1}x_{2i})^{z_i}, \label{eq:main-even-set}
\end{align}
where $\S\in\mathfrak{R}_{r,n}$ is the unique $n$-tuple such that $\S\prec\H$.
In particular,
\begin{align}
\sum_{\pi\in G_{r,2n}} & \chi_{1,b}(\pi)t^{\fdes(\pi)}q^{\fmaj(\pi)}x^{\col(\pi)} \notag \\
& = \prod_{i=1}^{n}\left(1-t^rq^{r(2i-1)}\right)\sum_{\pi\in G_{r,n}}t^{\fdes(\pi)}(\omega^bq)^{2\fmaj(\pi)}x^{2\col(\pi)}. \label{eq:main-even}
\end{align}
\end{theorem}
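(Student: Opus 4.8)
The plan is to reduce \eqref{eq:main-even-set} to a computation over the fixed-point set $\mathcal{F}_{r,2n}(\H)$ via Lemma~\ref{lem:fix-color-even}, and then to exhibit an explicit bijection between $\mathcal{F}_{r,2n}(\H)$ and pairs $(\pi, T)$, where $\pi \in G_{r,n}(\S)$ and $T \subseteq [n]$ records which of the $n$ adjacent blocks $\{2i-1, 2i\}$ is ``reversed.'' First I would observe that a fixed point $\tau \in \mathcal{F}_{r,2n}(\H)$ consists of $n$ consecutive two-letter blocks, where the $i$th block (in the order the values $1,\dots,2n$ are encountered) contains $2i-1$ and $2i$ in some order, both carrying a common color $z_i \in S_i = H_{2p-1}\cap H_{2p}$. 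Contracting each such block to a single letter of half the value (i.e.\ $2i-1, 2i \mapsto i$) produces an underlying colored permutation $\bar\tau \in G_{r,n}(\S)$, and the data lost is precisely the orientation bit of each block. This map is a bijection $\mathcal{F}_{r,2n}(\H) \leftrightarrow G_{r,n}(\S) \times 2^{[n]}$.

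The heart of the argument is then to track how the four statistics $\chi_{1,b}$, $\fdes$, $\fmaj$, and $\prod x_i^{z_i}$ transform under this contraction. For the character, using the length formula \eqref{eq:length} I expect that replacing the ordered block containing $2i-1,2i$ by the single contracted letter changes $\ell$ in a controlled way; since $\chi_{1,b}(\pi) = (-1)^{\ell(\pi)-\col(\pi)}\omega^{b\col(\pi)}$ and $\col(\tau) = 2\col(\bar\tau)$, the $\omega$-part becomes $\omega^{2b\col(\bar\tau)} = (\omega^b)^{2\col(\bar\tau)}$, matching the $(\omega^b q)^{2\fmaj}$ on the right once the $q$-part is handled; and the sign $(-1)^{\ell(\tau)-\col(\tau)}$ should factor as $(-1)^{\sum_{i\in T}(\text{something odd})}$ times a sign-free contribution, giving the alternating-sign product $\prod_{i=1}^n(1 - t^r q^{r(2i-1)})$ after summing over $T \subseteq [n]$. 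For $\Des_F$: in the flag order \eqref{eq:flag-order} the two letters inside a monochromatic block are consecutive in value, so whether the block is ascending or descending contributes an internal descent or not, while the comparison between the last letter of one block and the first of the next block is insensitive to the internal orientation and mirrors the descent structure of $\bar\tau$ at the corresponding position (scaled by the block). This is what should yield $\des_F(\tau) = 2\des_F(\bar\tau) + |T|$ and a parallel identity for $\maj_F$, so that $\fdes(\tau) = \fdes(\bar\tau) + (\text{contribution of } T)$ and $\fmaj(\tau) = 2\fmaj(\bar\tau) + (\text{contribution of }T)$; summing the $T$-contributions over all subsets produces exactly the claimed product.

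For the monomial weight, $z_i$ is the common color of block $i$, and the contracted letter $i$ in $\bar\tau$ receives color $z_i$, so $\prod_{i=1}^{2n} x_i^{z_i}$ over $\tau$ equals $\prod_{i=1}^n (x_{2i-1}x_{2i})^{z_i}$ over $\bar\tau$ (the two variables attached to the block values); this is immediate and needs only bookkeeping. Finally I would multiply out: summing over $\bar\tau \in G_{r,n}(\S)$ reproduces the right-hand sum, and summing the internal-orientation factors over $T \subseteq [n]$ reproduces $\prod_{i=1}^n(1 - t^r q^{r(2i-1)})$, where the $i$th factor's $-t^r q^{r(2i-1)}$ term encodes the cost (in $\fdes$, $\fmaj$, and sign) of reversing the $i$th block. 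The specialization \eqref{eq:main-even} then follows by taking $H_i = \mathbb{Z}_r$ for all $i$ (so $\S = (\mathbb{Z}_r,\dots,\mathbb{Z}_r)$, $G_{r,2n}(\H) = G_{r,2n}$, $G_{r,n}(\S) = G_{r,n}$) and setting every $x_i = x$. The main obstacle I anticipate is verifying cleanly that the exponent of $t^r q^{r(2i-1)}$ attached to reversing block $i$ is exactly $r(2i-1)$ in $\fmaj$ and $r$ in $\fdes$ simultaneously, and that the accompanying sign is $(-1)^1$; this requires carefully locating where block $i$ sits among the descents of $\bar\tau$ and confirming the $\maj_F$ increment is independent of that location — i.e.\ that the internal descent created by reversing block $i$ always lands at a position contributing $r\cdot(2i-1)/r$... more precisely, that the telescoping over block positions is exactly right — and checking the length parity via \eqref{eq:length} rather than \eqref{eq:1-dim-old}.
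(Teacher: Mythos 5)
Your proposal follows essentially the same route as the paper: reduce to the fixed points of $\varphi$ via Lemma~\ref{lem:fix-color-even}, contract the $n$ monochromatic blocks of a fixed point to an element of $G_{r,n}(\S)$ together with a subset $T\subseteq[n]$ recording which blocks are reversed (exactly the paper's ``hatted'' permutations), track $\chi_{1,b}$, $\fdes$, $\fmaj$ and the $x$-monomial under this contraction, and factor the sum over $T$ into $\prod_{i=1}^{n}\left(1-t^rq^{r(2i-1)}\right)$. One small correction: the descent \emph{count} does not double, i.e.\ $\des_F(\tau)=\des_F(\bar\tau)+|T|$ (only the descent positions, hence $\maj_F$, get doubled), which is precisely what makes your later, correct relation $\fdes(\tau)=\fdes(\bar\tau)+r|T|$ hold.
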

\proof
\eqref{eq:main-even-set} can be reduced to \eqref{eq:main-even} by simply letting $\H=(\mathbb{Z}_r,\ldots,\mathbb{Z}_r)$, $\S=(\mathbb{Z}_r,\ldots,\mathbb{Z}_r)$ and $x=x_1=x_2=\cdots=x_{2n}$.
By Lemma~\ref{lem:fix-color-even}, to derive \eqref{eq:main-even-set} it suffices to consider the set of fixed points $\mathcal{F}_{r,2n}(\H)$.

Consider a bijective correspondence $\phi$ between elements in $\mathcal{F}_{r,2n}(\H)$ and colored permutations in $G_{r,n}(\S)$ with some letters marked a ``hat'', according to the following rules.
\begin{itemize}
\item Each pair of adjacent entries of type $(2j-1)^{[t]},2j^{[t]}$ in $\mathcal{F}_{r,2n}(\H)$ is replaced by $j^{[t]}$.
\item Each pair of adjacent entries of type $2j^{[t]},(2j-1)^{[t]}$ in $\mathcal{F}_{r,2n}(\H)$ is replaced by $\hat{j}^{[t]}$.
\end{itemize}
We denote by $\widehat{G}_{r,n}(\S)$ the set of all \emph{hatted} $\S$-restricted colored permutations of $[n]$.
Formally, $\phi:\pi=(\sigma,\z)\in\F_{r,2n}(\H)\mapsto\pi'=(\sigma',\z')\in\widehat{G}_{r,n}(\S)$ by
\begin{equation}\label{eq:phi-def}
\sigma'_i = \begin{cases}
j, & \text{if } \sigma_{2i-1} = 2j, \\
\hat{j}, & \text{if } \sigma_{2i} = 2j,
\end{cases}
\quad \text{and} \quad
z'_i = z_{2i-1} = z_{2i}
\end{equation}
for $i=\in[n]$.
For $\pi'=(\sigma',\z')\in\widehat{G}_{r,n}(\S)$ let $\ell(\pi'),\chi_{a,b}(\pi'), \Des_F(\pi'), \fdes(\pi'), \fmaj(\pi')$ and $\col(\pi')$ be defined by omitting the hats and let $\P(\pi')$ be the set of the positions of the hatted letters.
For example, let $5^{[1]}6^{[1]}2^{[2]}1^{[2]}874^{[1]}3^{[1]}\in\F_{3,8}(\H)$ for some $\H\in\mathfrak{R}_{3,8}$.
Then $\pi'=\phi(\pi)=3^{[1]}\hat{1}^{[2]}\hat{4}\hat{2}^{[1]}\in\widehat{G}_{3,4}(\S)$, where $\S\prec\H$, is the corresponding hatted colored permutation with $\ell(\pi')=9$, $\chi_{a,b}(\pi')=(-1)^{5a}\omega^{4b}$, $\Des_F(\pi')=\{1,3\}$, $\fdes(\pi')=7$, $\fmaj(\pi')=16$, $\col(\pi')=4$ and $\P(\pi')=\{2,3,4\}$.

Let $\pi'=(\sigma',\z')=\phi(\pi=(\sigma,\z))\in\widehat{G}_{r,n}(\S)$.
By the definition of $\phi$, i.e., \eqref{eq:phi-def}, we have
\begin{equation}\label{eq:col-hat}
\prod_{i=1}^{2n}x_i^{z_i} = \prod_{i=1}^{n}(x_{2i-1}x_{2i})^{z_i'} \quad \text{and} \quad \col(\pi)=2\col(\pi').
\end{equation}
By computing the flag descent set with respect to the linear order \eqref{eq:flag-order}, it follows that
\begin{equation}\label{eq:DesF-hat}
\Des_F(\pi)=\{2i:\,i\in\Des_F(\pi')\}\cup\{2i-1:\,i\in\P(\pi')\}
\end{equation}
and thus
\begin{equation}\label{eq:fdes-hat}
\fdes(\pi) = r(\des_F(\pi') + |\P(\pi')|) + z'_1 = \fdes(\pi')+r|\P(\pi')|.
\end{equation}
By~\eqref{eq:DesF-hat}, we have $\maj_F(\pi)=2\maj_F(\pi')+\sum_{i\in\P(\pi')}(2i-1)$. 
It follows from~\eqref{eq:col-hat} that
\begin{align}
\fmaj(\pi) &= r\Big(2\maj_F(\pi')+\sum_{i\in\P(\pi')}(2i-1)\Big) + 2\col(\pi') \notag \\
&= 2\fmaj(\pi') + r\sum_{i\in\P(\pi')}(2i-1). \label{eq:fmaj-hat}
\end{align}
Furthermore, decompose $\P(\pi')$ into two parts $\P^N(\pi')\uplus\P^C(\pi')$, where $\P^N(\pi')$ collects the positions of the hatted letters without colors (i.e., $z'_i=0$).
Following the linear order~\eqref{eq:order_L} we have
\begin{equation}\label{eq:inv-hat}
\inv(\pi) = 4\cdot\inv(\pi') + |\P^N(\pi')| - |\P^C(\pi')| + \sum_{z'_i>0}1.
\end{equation}
Then,
\begin{align*}
\ell(\pi) &= \inv(\pi) + \sum_{z_i>0}(\sigma_i+z_i-1) \\
&= 4\cdot\inv(\pi') + |\P^N(\pi')| - |\P^C(\pi')| + \sum_{z'_i>0}1 + \sum_{z'_i>0}\big((4\sigma'_i-1)+2(z'_i-1)\big) \\
&= 4\Big(\inv(\pi')+\sum_{z'_i>0}(\sigma'_i+z'_i-1)\Big) + |\P^N(\pi')| - |\P^C(\pi')| - 2\sum_{z'_i>0}(z'_i-1) \\
&= 4\ell(\pi') + |\P^N(\pi')| - |\P^C(\pi')| - 2\sum_{z'_i>0}(z'_i-1).
\end{align*}
By plugging it into \eqref{eq:length}, it follows that
\begin{align}
\chi_{1,b}(\pi) &= (-1)^{4\ell(\pi') + |\P^N(\pi')| - |\P^C(\pi')| - 2\sum_{z'_i>0}(z'_i-1) - 2\sum_{z'_i>0}1} \ \omega^{2b\cdot\col(\pi')} \notag \\
&= (-1)^{|\P^N(\pi')| - |\P^C(\pi')|}\omega^{2b\cdot\col(\pi')} \notag \\
&= (-1)^{|\P(\pi')|}\omega^{2b\cdot\col(\pi')}. \label{eq:chi-hat}
\end{align}
Hence, by Lemma~\ref{lem:fix-color-even}, \eqref{eq:col-hat}, \eqref{eq:fdes-hat}, \eqref{eq:fmaj-hat}, \eqref{eq:chi-hat} and the fact that $\omega^r=1$, the LHS of \eqref{eq:main-even-set} is
\begin{align*}
& \sum_{\pi\in\F_{r,2n}(\H)} \chi_{1,b}(\pi)t^{\fdes(\pi)}q^{\fmaj(\pi)}\prod_{i=1}^{2n}x_i^{z_i} \\
=& \sum_{\pi'\in\widehat{G}_{r,n}(\S)} (-1)^{|\P(\pi')|} \omega^{2b\cdot\col(\pi')} t^{\fdes(\pi')+r|\P(\pi')|} q^{2\fmaj(\pi')+\sum_{i\in\P(\pi')}(2i-1)} \prod_{i=1}^{n}(x_{2i-1}x_{2i})^{z'_i} \\
=& \sum_{\pi'\in\widehat{G}_{r,n}(\S)} \left((-1)^{|\P(\pi')|} t^{r|\P(\pi')|} q^{r\sum_{i\in\P(\pi')}(2i-1)} \right) \omega^{2b\big(r\cdot\maj_F(\pi')+\col(\pi')\big)} t^{\fdes(\pi')} q^{2\fmaj(\pi')} \prod_{i=1}^{n}(x_{2i-1}x_{2i})^{z'_i} \\
=& \sum_{\pi\in G_{r,n}(\S)} \Big(\sum_{A\subseteq[n]}(-1)^{|A|}t^{r|A|}q^{r\sum_{i\in A}(2i-1)}\Big) \omega^{2b\cdot\fmaj(\pi)} t^{\fdes(\pi)} q^{2\fmaj(\pi)} \prod_{i=1}^{n}(x_{2i-1}x_{2i})^{z_i} \\
=& \prod_{i=1}^{n}\big(1-t^rq^{r(2i-1)}\big) \sum_{\pi\in G_{r,n}(\S)} t^{\fdes(\pi)}\big(\omega^b q\big)^{2\fmaj(\pi)}\prod_{i=1}^{n}(x_{2i-1}x_{2i})^{z_i},
\end{align*}
as desired.
\qed

\begin{theorem}\label{thm:main-odd}
Let $r$ and $n$ be two positive integers.
For integer $0\leq b\leq r-1$ and $\S\in\mathfrak{R}_{r,n}$, we have
\begin{equation}
\sum_{\pi\in G_{r,n}(\S)} \chi_{0,b}(\pi)t^{\fdes(\pi)}q^{\fmaj(\pi)}\prod_{i=1}^{n}x_i^{z_i} = \sum_{\pi\in G_{r,n}(\S)}t^{\fdes(\pi)}(\omega^bq)^{\fmaj(\pi)}\prod_{i=1}^{n}x_i^{z_i}. \label{eq:main-odd-set}
\end{equation}
In particular,
\begin{equation}
\sum_{\pi\in G_{r,n}} \chi_{0,b}(\pi)t^{\fdes(\pi)}q^{\fmaj(\pi)}x^{\col(\pi)}  = \sum_{\pi\in G_{r,n}}t^{\fdes(\pi)}(\omega^bq)^{\fmaj(\pi)}x^{\col(\pi)}. \label{eq:main-odd}
\end{equation}
\end{theorem}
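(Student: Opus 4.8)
The plan is to observe that, in contrast to Theorem~\ref{thm:main-even}, this identity requires neither an involution nor a bijection: it is a term-by-term identity on $G_{r,n}(\S)$ that follows from a single congruence. First I would recall from Theorem~\ref{thm:1-dm} that specializing $a=0$ in~\eqref{eq:1-dim} gives $\chi_{0,b}(\pi)=\omega^{b\,\col(\pi)}$, so that the character $\chi_{0,b}$ depends only on the total color $\col(\pi)$ and not on the length.

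Next I would use the defining decomposition $\fmaj(\pi)=r\cdot\maj_F(\pi)+\col(\pi)$, which gives the congruence $\fmaj(\pi)\equiv\col(\pi)\pmod r$. Since $\omega$ is a primitive $r$th root of $1$, the value $\omega^{b\,\fmaj(\pi)}$ depends only on $\fmaj(\pi)$ modulo $r$, hence
\[
\omega^{b\,\fmaj(\pi)}=\omega^{b\,\col(\pi)}=\chi_{0,b}(\pi).
\]
Consequently, for each individual $\pi\in G_{r,n}(\S)$,
\[
t^{\fdes(\pi)}\big(\omega^b q\big)^{\fmaj(\pi)}\prod_{i=1}^{n}x_i^{z_i}
=\omega^{b\,\fmaj(\pi)}\,t^{\fdes(\pi)}q^{\fmaj(\pi)}\prod_{i=1}^{n}x_i^{z_i}
=\chi_{0,b}(\pi)\,t^{\fdes(\pi)}q^{\fmaj(\pi)}\prod_{i=1}^{n}x_i^{z_i}.
\]
Summing this equality over all $\pi\in G_{r,n}(\S)$ yields~\eqref{eq:main-odd-set} directly, and~\eqref{eq:main-odd} follows by the usual specialization $\H=\S=(\mathbb{Z}_r,\ldots,\mathbb{Z}_r)$ together with $x=x_1=\cdots=x_n$ (so $\prod_i x_i^{z_i}=x^{\col(\pi)}$).

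Since the argument proceeds monomial by monomial, there is essentially no obstacle here; the only point requiring care is the elementary bookkeeping observation that $\fdes$ is irrelevant (the exponent of $t$ is never altered) and that it is precisely the congruence $\fmaj\equiv\col\pmod r$ that makes the substitution $q\mapsto\omega^b q$ inside $\fmaj$ reproduce the character value $\chi_{0,b}(\pi)$. I would remark that this is the ``odd'' or degenerate counterpart of Theorem~\ref{thm:main-even}: for $a=0$ the character carries no length information, so no cancellation mechanism and hence no product factor $\prod_i(1-t^rq^{r(2i-1)})$ arises.
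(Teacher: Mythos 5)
Your proposal is correct and matches the paper's own proof: both arguments rest on $\chi_{0,b}(\pi)=\omega^{b\,\col(\pi)}$ together with the congruence $\fmaj(\pi)\equiv\col(\pi)\pmod r$ (equivalently, $\omega^r=1$), giving the identity term by term, with \eqref{eq:main-odd} obtained by the same specialization. No gaps.
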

\proof
By \eqref{eq:1-dim} and the fact that $\omega^r=1$, the LHS of \eqref{eq:main-odd-set} is equal to
\begin{align*}
& \sum_{\pi\in G_{r,n}(\S)} \omega^{b\cdot\col(\pi)} t^{\fdes(\pi)}q^{\fmaj(\pi)}\prod_{i=1}^{n}x_i^{z_i} \\
=& \sum_{\pi\in G_{r,n}(\S)} \omega^{b(r\cdot\maj_F(\pi)+\col(\pi))} t^{\fdes(\pi)}q^{\fmaj(\pi)}\prod_{i=1}^{n}x_i^{z_i} \\
=& \sum_{\pi\in G_{r,n}(\S)} \omega^{b\cdot\fmaj(\pi)} t^{\fdes(\pi)}q^{\fmaj(\pi)}\prod_{i=1}^{n}x_i^{z_i},
\end{align*}
as desired.
Finally, \eqref{eq:main-odd} can be derived by plugging $\S=(\mathbb{Z}_r,\ldots,\mathbb{Z}_r)$ and $x=x_1=\cdots=x_n$ into \eqref{eq:main-odd-set}.
\qed

\section{Coxeter groups of type $B_n$}\label{sec:Bn}


In $\B_n$ there are four one-dimensional characters: the identity $\chi_{0,0}(\pi)=1$, $\chi_{0,1}(\pi)=(-1)^{\nega(\pi)}$, $\chi_{1,0}(\pi)=(-1)^{\inv(|\pi|)}$ and $\chi_{1,1}(\pi)=(-1)^{\ell_B}(\pi)$, where $|\pi|=|\pi_1||\pi_2|\cdots|\pi_n|$.

Let $\mathfrak{R}_n:=\mathfrak{R}_{2,n}$.
For simplicity, the entries of the $n$-tuple $\S\in\mathfrak{R}_n$ are represented as $+$, $-$ or $\pm$.
For example, $(+,\pm)$ means the first number is positive while the second one can be positive or negative and $\B_2(+,\pm)=\{12,1\bar{2},21,2\bar{1}\}$.

As an application of Theorem~\ref{thm:main-even} and Theorem~\ref{thm:main-odd}, we obtain some sign Euler-Mahonian identities for $\B_n$ as follows.

\begin{corollary}\label{cor:signedEM-B}
Let $n$ be a positive integer.
For $\H\in\mathfrak{R}_{2n}$ we have
\begin{enumerate}
\item[(i)] $\d \sum_{\pi\in\B_{2n}(\H)}(-1)^{\ell_B(\pi)}t^{\fdes(\pi)}q^{\fmaj(\pi)}\prod_{i\in\Nega(\pi)}x_i$\\
\hspace*{1cm} $\d= \prod_{i=1}^{n}(1-t^2q^{4i-2}) \sum_{\pi\in\B_{n}(\S)}t^{\fdes(\pi)}q^{2\fmaj(\pi)}\prod_{i\in\Nega(\pi)}x_{2i-1}x_{2i}$,
\medskip
\item[(ii)] $\d \sum_{\pi\in\B_{2n}(\H)}(-1)^{\inv(|\pi|)}t^{\fdes(\pi)}q^{\fmaj(\pi)}\prod_{i\in\Nega(\pi)}x_i$\\
\hspace*{1cm} $\d= \prod_{i=1}^{n}(1-t^2q^{4i-2}) \sum_{\pi\in\B_{n}(\S)}t^{\fdes(\pi)}q^{2\fmaj(\pi)}\prod_{i\in\Nega(\pi)}x_{2i-1}x_{2i}$,
\end{enumerate}
where $\S\in\mathfrak{R}_n$ is the unique $n$-tuple such that $\S\prec\H$;
and, for $\S\in\mathfrak{R}_n$ we have
\begin{enumerate}
\item[(iii)] $\d \sum_{\pi\in\B_{n}(\S)}(-1)^{\nega(\pi)}t^{\fdes(\pi)}q^{\fmaj(\pi)}\prod_{i\in\Nega(\pi)}x_i = \sum_{\pi\in\B_{n}(\S)}t^{\fdes(\pi)}(-q)^{\fmaj(\pi)}\prod_{i\in\Nega(\pi)}x_i$.
\end{enumerate}
In particular,
\begin{enumerate}
\item[(i)] $\d \sum_{\pi\in\B_{2n}}(-1)^{\ell_B(\pi)}t^{\fdes(\pi)}q^{\fmaj(\pi)}x^{\nega(\pi)} = \prod_{i=1}^{n}(1-t^2q^{4i-2}) \sum_{\pi\in\B_{n}}t^{\fdes(\pi)}q^{2\fmaj(\pi)}x^{2\nega(\pi)}$,
\item[(ii)] $\d \sum_{\pi\in\B_{2n}}(-1)^{\inv(|\pi|)}t^{\fdes(\pi)}q^{\fmaj(\pi)}x^{\nega(\pi)} = \prod_{i=1}^{n}(1-t^2q^{4i-2}) \sum_{\pi\in\B_{n}}t^{\fdes(\pi)}q^{2\fmaj(\pi)}x^{2\nega(\pi)}$,
\item[(iii)] $\d \sum_{\pi\in\B_{n}}(-1)^{\nega(\pi)}t^{\fdes(\pi)}q^{\fmaj(\pi)}x^{\nega(\pi)} = \sum_{\pi\in\B_{n}}t^{\fdes(\pi)}(-q)^{\fmaj(\pi)}x^{\nega(\pi)}$.
\end{enumerate}
\end{corollary}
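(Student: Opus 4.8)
The plan is to obtain Corollary~\ref{cor:signedEM-B} by specializing Theorem~\ref{thm:main-even} and Theorem~\ref{thm:main-odd} to $r=2$ and translating colored-permutation language into signed-permutation language. Under the identification $\B_n = G_{2,n}$ --- in which a letter carrying color $1$ is read as a negative letter --- one has $\col(\pi) = \nega(\pi)$ and $\prod_{i=1}^{n} x_i^{z_i} = \prod_{i\in\Nega(\pi)} x_i$; the restricted families also match, $\mathfrak{R}_n = \mathfrak{R}_{2,n}$ and $G_{2,n}(\S) = \B_n(\S)$, with $\S\prec\H$ being exactly relation~\eqref{eq:S_iH_i}. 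A quick check shows that the $G_{2,n}$-statistics $\fdes$, $\fmaj$ defined through the order~\eqref{eq:flag-order} restrict to the type-$B$ flag statistics recalled in the introduction, and that the length of $G_{2,n}$ equals $\ell_B$ (both equal $\inv$ with respect to $\bar n<\cdots<\bar 1<1<\cdots<n$ plus $\sum_{i\in\Nega(\pi)}|\pi_i|$).

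The first real step is to read off the characters. Putting $r=2$ (so $\omega=-1$) in Theorem~\ref{thm:1-dm}, formula~\eqref{eq:1-dim} gives $\chi_{0,1}(\pi)=\omega^{\col(\pi)}=(-1)^{\nega(\pi)}$, while $\chi_{1,1}(\pi)=(-1)^{\ell(\pi)-\col(\pi)}\omega^{\col(\pi)}=(-1)^{\ell(\pi)}=(-1)^{\ell_B(\pi)}$; for $\chi_{1,0}$ I would invoke the coincidence of~\eqref{eq:1-dim} with the classical expression~\eqref{eq:1-dim-old} from~\cite{CHFLL}, which for $a=1$, $b=0$ says precisely $(-1)^{\ell(\pi)-\col(\pi)}=(-1)^{\inv(|\pi|)}$, so $\chi_{1,0}(\pi)=(-1)^{\inv(|\pi|)}$. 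These are exactly the three nontrivial characters appearing in parts (i), (ii), (iii).

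Now I would specialize. For (i), take $r=2$ and $b=1$ in~\eqref{eq:main-even-set}: the left side becomes $\sum_{\pi\in\B_{2n}(\H)}(-1)^{\ell_B(\pi)}t^{\fdes(\pi)}q^{\fmaj(\pi)}\prod_{i\in\Nega(\pi)}x_i$, and on the right $\prod_{i=1}^{n}(1-t^{r}q^{r(2i-1)})=\prod_{i=1}^{n}(1-t^2q^{4i-2})$ while $(\omega^b q)^{2\fmaj(\pi)}=q^{2\fmaj(\pi)}$ because $\omega^2=1$; this is (i). Part (ii) is identical with $b=0$, again because $(\omega^0 q)^{2\fmaj}=q^{2\fmaj}$ --- so (i) and (ii) share a right-hand side even though they come from different characters. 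For (iii), take $r=2$ and $b=1$ in~\eqref{eq:main-odd-set}: the left side is $\sum_{\pi\in\B_n(\S)}(-1)^{\nega(\pi)}t^{\fdes(\pi)}q^{\fmaj(\pi)}\prod_{i\in\Nega(\pi)}x_i$ and the right side is $\sum_{\pi\in\B_n(\S)}t^{\fdes(\pi)}(\omega q)^{\fmaj(\pi)}\prod_{i\in\Nega(\pi)}x_i=\sum_{\pi\in\B_n(\S)}t^{\fdes(\pi)}(-q)^{\fmaj(\pi)}\prod_{i\in\Nega(\pi)}x_i$. The three displayed ``in particular'' identities then follow by the specialization $\H=(\pm,\dots,\pm)$ (forcing $\S=(\pm,\dots,\pm)$, $\B_{2n}(\H)=\B_{2n}$, $\B_n(\S)=\B_n$) together with $x=x_1=x_2=\cdots$, under which $\prod_{i\in\Nega(\pi)}x_i=x^{\nega(\pi)}$ and $\prod_{i\in\Nega(\pi)}x_{2i-1}x_{2i}=x^{2\nega(\pi)}$. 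Since every step is a specialization or a direct translation, there is no genuine obstacle; the only point deserving care is the character bookkeeping above --- in particular that at $r=2$ the factor $\omega^{\col(\pi)}$ flips the parity of $\ell(\pi)-\col(\pi)$ so that $\chi_{1,1}$ collapses to $(-1)^{\ell_B}$, and that it is the classical form~\eqref{eq:1-dim-old} that pins down $\chi_{1,0}$ as $(-1)^{\inv(|\pi|)}$.
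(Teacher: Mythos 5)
Your proposal is correct and follows exactly the route the paper intends: the corollary is stated as a direct specialization of Theorems~\ref{thm:main-even} and~\ref{thm:main-odd} at $r=2$, with the character identifications $\chi_{1,1}=(-1)^{\ell_B}$, $\chi_{1,0}=(-1)^{\inv(|\pi|)}$, $\chi_{0,1}=(-1)^{\nega}$ exactly as listed at the start of Section~\ref{sec:Bn}. Your bookkeeping (including $\ell=\ell_B$ under $\B_n=G_{2,n}$, the coincidence of the flag orders, and $(\omega^b q)^{2\fmaj}=q^{2\fmaj}$ for both $b=0,1$) is accurate, so there is nothing to add.
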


In what follows, we aim to derive the signed Eulerian identities for $B_{2n+1}$, which can be seen as natural generalizations of \eqref{eq:signedE-A-odd}.

For a given $\H\in\mathfrak{R}_{2n+1}$, consider the involution $\varphi$ on $\B_{2n+1}(\H)$ which is defined on $G_{r,2n}(\H)$ in Section~\ref{sec:Grn-main}.
That is, $\varphi(\pi)$ is obtained from $\pi$ by swapping the two letters $2i-1$ and $2i$, where $i$ is the smallest integer such that $2i-1$ and $2i$ have opposite signs or are not in adjacent positions.
Similar to the same arguments in Section~\ref{sec:Grn-main}, the fixed points under $\varphi$ are those signed permutations in which the letters $2i-1$ and $2i$ are adjacent and having the same sign, for $i\in[n]$.
Denote by $\mathcal{F}_{2n+1}(\H)$ the set of fixed points.
By the same argument in the proof of Lemma~\ref{lem:fix-color-even}, when $\pi$ is not a fixed point, $(-1)^{\inv(|\pi|)}=-(-1)^{\inv(|\varphi(\pi)|)}$ and $\fdes(\pi)=\fdes(\varphi(\pi))$.
It follows that
\begin{equation}\label{eq:fix-B-absinv}
\sum_{\pi\in\B_{2n+1}(\H)}(-1)^{\inv(|\pi|)}t^{\fdes(\pi)} = \sum_{\pi\in\mathcal{F}_{2n+1}(\H)}(-1)^{\inv(|\pi|)}t^{\fdes(\pi)}.
\end{equation}

For $k\in[n]$ let $\S_k$ be the $n$-tuple $\S\in\mathfrak{R}_n$ with an extra tilde mark above the $k$-th entry.
For $\S_k=(S_1,\ldots,\tilde{S}_k,\ldots,S_n)$, define $\B_n(\S_k)\subseteq\B_n(\S)$ to be the set of signed permutations such that (i) $\pi_i/|\pi_i|\in S_i$ for $1\leq i\leq n$ and (ii) $|\pi_k|=n$.

Consider $\H\in\mathfrak{R}_{2n+1}$ and $\S\in\mathfrak{R}_{n+1}$.
We say $\S_k\triangleleft\H$ if
\begin{align*}
S_i = \begin{cases}
H_{2i-1}\cap H_{2i}, & \text{if }i<k, \\
H_{2i-1}, & \text{if }i=k, \\
H_{2i}\cap H_{2i+1}, & \text{if }i>k.
\end{cases}
\end{align*}
Denote by $\mathbb{F}(\H)$ be the collection of $(n+1)$-tuples $\S_k$ such that $\S_k\triangleleft\H$.
For example, if $\H=(\pm,-,+,\pm,\pm)$, then $\mathbb{F}(\H)=\{(\tilde{\pm},\emptyset,\pm),(-,\tilde{+},\pm),(-,+,\tilde{\pm})\}$, and $\B_3((\tilde{\pm},\emptyset,\pm))=\emptyset$, $\B_3((-,\tilde{+},\pm))=\{\bar{1}32,\bar{1}3\bar{2},\bar{2}31,\bar{2}3\bar{1}\}$, $\B_3((-,+,\tilde{\pm}))=\{\bar{1}23,\bar{1}2\bar{3},\bar{2}13,\bar{2}1\bar{3}\}$.
Obviously, $\B_n(\S_h)\cap\B_n(\S_k)=\emptyset$ whenever $h\neq k$.

\begin{theorem}\label{thm:B-odd-absinv}
Let $n$ be a positive integer.
For $\H\in\mathfrak{R}_{2n+1}$, we have
\begin{align*}
\sum_{\pi\in\B_{2n+1}(\H)}(-1)^{\inv(|\pi|)}t^{\fdes(\pi)} = (1-t^2)^n \sum_{\S_k\in\mathbb{F}(\H)}\sum_{\pi\in\B_{n+1}(\S_k)}t^{\fdes(\pi)}.
\end{align*}
In particular,
\begin{align*}
\sum_{\pi\in\B_{2n+1}}(-1)^{\inv(|\pi|)}t^{\fdes(\pi)} = (1-t^2)^n \sum_{\pi\in\B_{n+1}}t^{\fdes(\pi)}.
\end{align*}
\end{theorem}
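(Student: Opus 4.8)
The plan is to mimic the strategy used for Theorem~\ref{thm:main-even}, adapting the involution $\varphi$ to the odd case $\B_{2n+1}(\H)$. The first reduction is already in place: equation~\eqref{eq:fix-B-absinv} lets us restrict attention to the fixed-point set $\mathcal{F}_{2n+1}(\H)$, consisting of those $\pi$ in which, for each $i\in[n]$, the letters $2i-1$ and $2i$ sit in adjacent positions with a common sign. The sole letter left unpaired is $2n+1$. So the next step is to set up a bijection $\phi$ from $\mathcal{F}_{2n+1}(\H)$ onto a disjoint union of hatted (restricted) signed permutations of $[n+1]$, in parallel with the map $\phi$ used in the even case but now recording \emph{where} the odd letter $2n+1$ lands: if $2n+1$ occupies window position $2k-1$ or $2k$, it becomes the $k$th entry of an element of $\B_{n+1}$, carrying the value $n+1$. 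This is exactly why the index set on the right is $\mathbb{F}(\H)$, the collection of tuples $\S_k$ with $\S_k\triangleleft\H$: the letter $2n+1$ can have any sign in $H_{2k-1}$ (resp.\ $H_{2k}$), which becomes the tilde-slot, while each paired block $\{2i-1,2i\}$ contributes a common sign forced to lie in $H_{2i-1}\cap H_{2i}$ (with the shift in indexing past position $k$). I would define $\phi$ on such a $\pi$ by replacing each adjacent block $(2j-1)^\epsilon(2j)^\epsilon$ by $j^\epsilon$ and each block $(2j)^\epsilon(2j-1)^\epsilon$ by $\hat{j}^\epsilon$, and replacing the isolated $2n+1^\epsilon$ by $(n+1)^\epsilon$ in its own slot; one checks $\phi$ is a bijection onto $\biguplus_{\S_k\in\mathbb{F}(\H)}\widehat{\B}_{n+1}(\S_k)$.

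Next I would compute how the two relevant statistics transform under $\phi$. For $\fdes$, the computation is the one already carried out in~\eqref{eq:DesF-hat}--\eqref{eq:fdes-hat} specialized to $r=2$: with respect to the flag order, a hatted entry in position $i$ of $\pi'$ forces a descent at window position $2i-1$ of $\pi$, a genuine flag descent of $\pi'$ at $i$ gives one at $2i$, and one must also track the position of the block containing $n+1$ and the odd leftover; the upshot should be $\fdes(\pi) = \fdes(\pi') + 2|\P(\pi')|$ where $\P(\pi')$ is the set of hat-positions among the first $n$ ``real'' blocks. For the character, the even-case identity~\eqref{eq:chi-hat} gave $\chi_{1,b}(\pi) = (-1)^{|\P(\pi')|}\omega^{2b\col(\pi')}$; here we only need $\chi_{1,0} = (-1)^{\inv(|\pi|)}$, and the analogous length/inversion bookkeeping — with $4\inv(\pi')$ replaced by the appropriate contribution and the extra letter $n+1$ contributing evenly — should yield $(-1)^{\inv(|\pi|)} = (-1)^{|\P(\pi')|}$. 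I would verify this by the same telescoping of the $\inv$ formula under the linear order~\eqref{eq:order_L} used in~\eqref{eq:inv-hat}, being careful that the block holding $2n+1$ contributes no parity change.

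Assembling these, the sum over $\mathcal{F}_{2n+1}(\H)$ becomes
\[
\sum_{\S_k\in\mathbb{F}(\H)}\ \sum_{\pi'\in\widehat{\B}_{n+1}(\S_k)} (-1)^{|\P(\pi')|} t^{\fdes(\pi')+2|\P(\pi')|},
\]
and summing over the choice of which real blocks are hatted factors as $\sum_{A\subseteq[n]}(-1)^{|A|}t^{2|A|} = (1-t^2)^n$ times $\sum_{\S_k\in\mathbb{F}(\H)}\sum_{\pi\in\B_{n+1}(\S_k)} t^{\fdes(\pi)}$, which is the claimed identity. The second displayed identity follows by taking $\H=(\pm,\ldots,\pm)$, for which $\mathbb{F}(\H)$ consists of the $n+1$ tuples $(\pm,\ldots,\tilde{\pm},\ldots,\pm)$ and $\biguplus_k \B_{n+1}(\S_k) = \B_{n+1}$, since $|\pi_k|=n+1$ for exactly one $k$. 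The main obstacle I anticipate is bookkeeping the interaction of the unpaired letter $2n+1$ with the flag-descent and inversion counts across the boundary position $k$: one must confirm that moving past the odd block shifts the block indices cleanly (hence the $H_{2i}\cap H_{2i+1}$ for $i>k$ in the definition of $\triangleleft$) and that the parity contributions of $n+1$ to both $\inv(|\pi|)$ and $\fdes$ are accounted for without introducing a spurious sign or a stray factor of $t$.
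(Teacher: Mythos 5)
Your proposal follows essentially the same route as the paper's proof: reduce to the fixed points of $\varphi$ via \eqref{eq:fix-B-absinv}, map them bijectively to hatted permutations in $\biguplus_{\S_k\in\mathbb{F}(\H)}\widehat{\B}_{n+1}(\S_k)$, and use $(-1)^{\inv(|\pi|)}=(-1)^{|\P(\pi')|}$ together with $\fdes(\pi)=\fdes(\pi')+2|\P(\pi')|$ to factor out $(1-t^2)^n$, exactly as the paper does. One small correction to your bookkeeping worry: in a fixed point the unpaired letter $\pm(2n+1)$ can only occupy an odd window position $2k-1$ (never $2k$), which is precisely why $S_k=H_{2k-1}$ in the definition of $\S_k\triangleleft\H$ and why the parity contribution $2n+1-\max(\pi)$ is even.
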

\proof
The second identity can be obtained from the first one by letting $\H=(\pm,\ldots,\pm)$.
More precisely, when $\H=(\pm,\ldots,\pm)$, $\B_{2n+1}(\H)=\B_{2n+1}$ and $\mathbb{F}(\H)=\S_1\uplus\S_2\uplus\cdots\uplus\S_{n+1}$, where $\S=(\pm,\ldots,\pm)\in\mathfrak{R}_{n+1}$.
Therefore, $$\biguplus_{\S_k\in\mathbb{F}(\H)}\B_{n+1}(\S_k) = \B_{n+1}.$$

By \eqref{eq:fix-B-absinv}, to derive the first identity it suffices to consider the set $\mathcal{F}_{2n+1}(\H)$.
Consider the bijective correspondence $\phi$ between elements in $\mathcal{F}_{2n+1}(\H)$ and elements in $\uplus_{\S_k\in\mathbb{F}(\H)}\B_{n+1}(\S_k)$ with some letters marked a ``hat'', according to the following rules.
\begin{itemize}
\item Each pair of adjacent entries of type $\pm(2j-1),\pm 2j$ in $\mathcal{F}_{2n+1}(\H)$ is replaced by $\pm j$.
\item Each pair of adjacent entries of type $\pm 2j,\pm(2j-1)$ in $\mathcal{F}_{2n+1}(\H)$ is replaced by $\pm\hat{j}$.
\item The entry $\pm(2n+1)$ in $\mathcal{F}_{2n+1}(\H)$ is replaced by $\pm(n+1)$.
\end{itemize}
We denote by $\widehat{\B}_{n+1}(\S_k)$ the set of all hatted $\S_k$-restricted signed permutations on $[n+1]$ for $\S_k\in\mathbb{F}(\H)$.
For $\pi'\in\widehat{\B}_{n+1}(\S_k)$, let $\inv(|\pi'|)$, $\Des_F(\pi')$ and $\fdes(\pi')$ be defined by omitting the hats, and let $\L(\pi')$ and $\P(\pi')$ respectively be the sets of the hatted letters and their positions.
Note here that $0\leq|\L(\pi')|=|\P(\pi')|\leq n$, $k\notin\P(\pi')$, and $n+1,\overline{n+1}\notin\L(\pi')$.
For example, let $\pi=\bar{5}\bar{6}\bar{2}\bar{1}879\bar{4}\bar{3}$.
Then $\pi'=\phi(\pi)=\bar{3}\hat{\bar{1}}\hat{4}5\hat{\bar{2}}$ is the corresponding hatted signed permutation with $\inv(|\pi'|)=4$, $\Des_F(\pi')=\{1,4\}$, $\fdes(\pi')=5$, $\L(\pi')=\{1,2,4\}$ and $\P(\pi')=\{2,3,5\}$.

Pick $\pi\in\mathcal{F}_{2n+1}(\H)$.
Denote by $\pi'=\phi(\pi)\in\widehat{\B}_{n+1}(\S_k)$ for some $\S_k\in\mathbb{F}(\H)$.
Let $\max(\pi)$ be the position of letter $\pm(2n+1)$ in $\pi$.
Obviously, $\max(\pi)=2k-1$.
We further let $\sigma$ (resp., $\sigma'$) be the resulting $\H$-restricted signed permutation (resp., hatted $\S_k$-restricted signed permutation) obtained from $\pi$ (resp., $\pi'$) by removing the letter $\pm(2n+1)$ (resp., $\pm(n+1)$).
By the definition of $\phi$, we have
\begin{align*}
\inv(|\pi|) &= \inv(|\sigma|) + \big(2n+1-\max(\pi)\big) \\
&= 4\inv(|\sigma'|) + |\L(\sigma')| + \big(2n+1-(2k-1)\big) \\
&= 4\inv(|\sigma'|) + |\L(\pi')| + 2\big(n-k-1\big),
\end{align*}
which implies that
\begin{equation}\label{eq:B-odd-absinv-fold1}
(-1)^{\inv|\pi|} = (-1)^{|\L(\pi')|}.
\end{equation}
Moreover,
\begin{align*}
\Des_F(\pi) =& \{2i:\,i\in\Des_F(\pi'),\,i<k\} \cup \{2i-1:\,i\in\Des_F(\pi'),\,i\geq k\} \\
&\cup \{2i:\,i\in\P(\pi'),i<k\} \cup \{2i-1:\,i\in\P(\pi'),\,i>k\}.
\end{align*}
Since $\pi_1$ and $\pi'_1$ have the same sign, it follows that
\begin{equation}\label{eq:B-odd-absinv-fold2}
\fdes(\pi) = 2\big(\des_F(\pi')+|\P(\pi')|\big) + \delta(\pi_1<1) = \fdes(\pi')+2|\L(\pi')|.
\end{equation}
Hence, by \eqref{eq:B-odd-absinv-fold1} and \eqref{eq:B-odd-absinv-fold2} we have
\begin{align*}
\sum_{\pi\in\mathcal{F}_{2n+1}(\H)}(-1)^{\inv(|\pi|)}t^{\fdes(\pi)}
=& \sum_{\S_k\in\mathbb{F}(\H)}\sum_{\pi'\in\widehat{\B}_{n+1}(\S_k)} \left((-1)^{|\L(\pi')|}t^{2|\L(\pi')|}\right) t^{\fdes(\pi')} \\
=& \sum_{\S_k\in\mathbb{F}(\H)}\sum_{\pi'\in\B_{n+1}(\S_k)} \left(\sum_{A\subseteq[n]}(-1)^{|A|}t^{2|A|}\right) t^{\fdes(\pi')} \\
=& (1-t^2)^n \sum_{\S_k\in\mathbb{F}(\H)}\sum_{\pi'\in\B_{n+1}(\S_k)} t^{\fdes(\pi')}.
\end{align*}
This completes the proof.
\qed

\medskip

Now, we consider the signed identity in which the sign is taken to be $(-1)^{\ell_B}$.
Recall that $\varphi$ is an involution on $\B_{2n+1}$.
Similar to the argument for getting the identity \eqref{eq:fix-B-absinv}, as $(-1)^{\ell_B(\pi)}=-(-1)^{\ell_B(\varphi(\pi))}$ for any non-fixed point under $\varphi$ on $\B_{2n+1}$, we have
\begin{equation}\label{eq:fix-B-length}
\sum_{\pi\in\B_{2n+1}}(-1)^{\ell_B(\pi)}t^{\fdes(\pi)} = \sum_{\pi\in\mathcal{F}_{2n+1}}(-1)^{\ell_B(\pi)}t^{\fdes(\pi)},
\end{equation}
where $\mathcal{F}_{2n+1}$ simply is the set of fixed points under $\varphi$ on $\B_{2n+1}$.
Note that $\mathcal{F}_{2n+1}$ consists of those signed permutations in which the letters $2i-1$ and $2i$ are adjacent and having the same sign, for $i\in[n]$.

For a signed permutation $\pi\in\B_n$ let $$\Des_B(\pi):=\{i:\,0\leq i\leq n-1,\pi_i>\pi_{i+1}\}$$
with respect to the natural linear order $$\bar{n}<\cdots<\bar{1}<1\cdots<n,$$ where $\pi_0:=0$ by convention.
Following \cite{BB_05}, the \emph{type $B$ descent number} of $\pi$, denoted by $\des_B(\pi)$, is then defined to be the cardinality of $\Des_B(\pi)$, and the \emph{type B major index} of $\pi$ is given by $$\maj_B(\pi):=\sum_{i\in\Des_B(\pi)}i.$$
We use $\sgm$ to record the \emph{sign of the maximum letter} (i.e., $n$) in $\pi$ by letting $\sgm(\pi)=1$ if its sign is negative and $0$ otherwise.
For example, $\sgm(\bar{2}51\bar{3}4)=0$ and $\sgm(\bar{2}\bar{5}1\bar{3}4)=1$.

\begin{theorem}\label{thm:B-odd-length}
For any positive integer $n$, we have
\begin{align*}
\sum_{\pi\in\B_{2n+1}}(-1)^{\ell_B(\pi)}t^{\fdes(\pi)} = (1-t^2)^n(1-t)\sum_{\pi\in\B_n}t^{2\des_B(\pi)}.
\end{align*}
\end{theorem}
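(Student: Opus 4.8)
The plan is to mimic the bijective/involution strategy that has worked for the preceding theorems, but to track the type $B$ statistics $\des_B$ and $\maj_B$ on the image side rather than $\fdes$. By \eqref{eq:fix-B-length} it suffices to evaluate $\sum_{\pi\in\mathcal{F}_{2n+1}}(-1)^{\ell_B(\pi)}t^{\fdes(\pi)}$, where $\mathcal{F}_{2n+1}$ consists of those $\pi\in\B_{2n+1}$ in which $2i-1$ and $2i$ occur adjacently with a common sign for each $i\in[n]$. First I would set up the same folding bijection $\phi$ as in the proof of Theorem~\ref{thm:B-odd-absinv}: a block $\pm(2j-1),\pm 2j$ collapses to $\pm j$, a block $\pm 2j,\pm(2j-1)$ collapses to $\pm\hat{j}$, and the lone letter $\pm(2n+1)$ becomes $\pm(n+1)$. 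This identifies $\mathcal{F}_{2n+1}$ with the set $\widehat{\B}_{n+1}$ of hatted signed permutations of $[n+1]$, where the hatted positions $\P(\pi')$ form an arbitrary subset of $[n+1]\setminus\{k\}$ (with $k$ the position of $\pm(n+1)$) and never include the slot of $\pm(n+1)$.

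The computational heart is then to express, for $\pi\in\mathcal{F}_{2n+1}$ with image $\pi'=\phi(\pi)\in\widehat{\B}_{n+1}$, both $(-1)^{\ell_B(\pi)}$ and $t^{\fdes(\pi)}$ in terms of data of $\pi'$. For the sign: from the length formula $\ell_B(\pi)=\inv(\pi)-\sum_{i\in\Nega(\pi)}\pi_i$ and a refinement of \eqref{eq:inv-hat} adapted to type $B$, I expect $\ell_B(\pi)\equiv |\P(\pi')|+\text{(something depending only on }\pi')\pmod 2$; the new feature compared to Theorem~\ref{thm:B-odd-absinv} is that unfolding a block contributes a factor of $2$ to the relevant negative-sum term, so only the parity bookkeeping of the hatted positions survives, together with a parity contribution from the position of the maximum. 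For the flag descent: as in \eqref{eq:B-odd-absinv-fold2} one gets $\fdes(\pi)=\fdes(\pi')+2|\P(\pi')|$ once the sign of $\pi_1$ is matched with that of $\pi'_1$. Summing the factor $\bigl((-1)t^2\bigr)^{|\P(\pi')|}$ over all admissible hat-sets (a subset of an $n$-element ground set, since $k$ is forbidden) produces the clean factor $(1-t^2)^n$, exactly as before, leaving $\sum_{\pi'\in\B_{n+1}}(\pm 1)^{\cdots}t^{\fdes(\pi')}$ with the residual sign.

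The main obstacle, and the point where this proof genuinely departs from Theorem~\ref{thm:B-odd-absinv}, is the final identity
\[
\sum_{\pi'\in\B_{n+1}}(-1)^{\varepsilon(\pi')}t^{\fdes(\pi')} = (1-t)\sum_{\pi\in\B_n}t^{2\des_B(\pi)},
\]
where $\varepsilon(\pi')$ is the residual sign (morally the parity of $\ell_B$ of the collapsed permutation, reflecting the distinguished status of the maximum letter). Here I would use a second involution, on $\B_{n+1}$ this time, that pairs up signed permutations with opposite residual sign and equal $\fdes$, leaving as survivors a set naturally in bijection with $\B_n$; I expect the maximum letter $n+1$ to play the role of the "free" slot, its removal sending $\fdes$ on $\B_{n+1}$ to $2\des_B$ on $\B_n$ (the doubling coming from the flag normalization, and the factor $(1-t)$ coming from the two sign choices — or equivalently the two positions, immediately before or after its neighbor — of the largest letter once the rest is fixed). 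Making the bookkeeping of $\sgm$ and of the position of the maximum consistent across the two involutions is where care is needed; everything else is the routine parity-and-descent-set accounting already rehearsed in the earlier proofs.
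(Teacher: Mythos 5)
Your first half runs exactly along the paper's lines: reduce to the fixed-point set via \eqref{eq:fix-B-length}, fold with the bijection $\phi$ from Theorem~\ref{thm:B-odd-absinv}, establish $\fdes(\pi)=\fdes(\pi')+2|\P(\pi')|$, and extract $(1-t^2)^n$ by summing over hat-sets. One small correction there: the residual sign is not a ``parity contribution from the position of the maximum'' --- the position of $\pm(2n+1)$ only contributes even amounts to $\ell_B$; what survives is exactly the \emph{sign} of the maximum letter, i.e.\ the paper's identity $(-1)^{\ell_B(\pi)}=(-1)^{|\P(\pi')|+\sgm(\pi')}$, and you would need to carry out the two-case computation (positive versus negative maximum) to see this. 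That is a fixable bookkeeping issue.

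The genuine gap is the final step. After the folding one is left with proving
\begin{align*}
\sum_{\pi\in\B_{n+1}} (-1)^{\sgm(\pi)}\, t^{\fdes(\pi)} \;=\; (1-t)\sum_{\pi\in\B_n}t^{2\des_B(\pi)},
\end{align*}
which is Lemma~\ref{lem:lin} in the paper, and you only gesture at ``a second involution'' whose survivors would be ``naturally in bijection with $\B_n$'' with $\fdes$ becoming $2\des_B$ upon removal of the maximum letter. No such involution is constructed, and there is real reason to doubt the shape you describe: the right-hand side is not a plain nonnegative sum, removing $n+1$ does not convert the flag statistic $\fdes$ (computed in the order $\bar 1<\cdots<\overline{n+1}<1<\cdots<n+1$, with the extra $\delta(\pi_1<0)$) into the type $B$ statistic $2\des_B$ (computed in the natural order with $\pi_0=0$), and natural candidate survivor sets (e.g.\ $\pm(n+1)$ at the end or at the front) already fail for $n=1$, where the left side equals $1-t+t^2-t^3$. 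This change of statistic across the folding is precisely what the authors flag as the surprising feature of the theorem, and the paper handles it not by an involution but by the flag barred permutation technique of \cite{Lin} (in the spirit of Gessel--Stanley \cite{GS}): counting barred permutations on $\B_{n+1}^{\pm}$ in two ways yields $\sum_{k\geq0}(k+1)^n\lceil(k+1)/2\rceil t^k$ and $\sum_{k\geq0}(k+1)^n\lfloor(k+1)/2\rfloor t^k$, whose difference $\sum_{k\geq0}(2k+1)^n t^{2k}$ is then matched against Brenti's formula $\sum_{\pi\in\B_n}t^{\des_B(\pi)}/(1-t)^{n+1}=\sum_{k\geq0}(2k+1)^nt^k$ evaluated at $t^2$. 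Without either this argument or an actual construction of your proposed involution (with a proof that it preserves $\fdes$, reverses the residual sign, and that the survivors carry the weight $(1-t)t^{2\des_B}$), the proof is incomplete at its most substantive point.
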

\begin{proof}
Consider the bijection $\phi$ given in the proof of Theorem~\ref{thm:B-odd-absinv} between the elements in $\F_{2n+1}$ and the hatted signed permutations in $\widehat{\B}_{n+1}$.
Here, we view $\widehat{\B}_{n+1}$ as $\uplus_{k=1}^{n+1}\widehat{\B}_{n+1}(\S_k)$, where $\S=(\pm,\ldots,\pm)\in\mathfrak{R}_{n+1}$.
Pick $\pi\in\F_{2n+1}$.
Denote by $\pi'=\phi(\pi)\in\widehat{\B}_{n+1}$.
The definitions of $\max(\pi)$, $\inv(\pi')$, $\ell_B(\pi')$, $\Des_F(\pi')$, $\fdes(\pi')$, $\L(\pi')$ and $\P(\pi')$ are of the same as before.
Let $\sigma$ (resp.~$\sigma'$) be the resulting signed permutation (resp.~hatted signed permutation) obtained from $\pi$ (resp.~$\pi'$) by removing the letter $\pm(2n+1)$ (resp.~$\pm(n+1)$).
Observe that, if $\max(\pi)=2k-1$, then $\pi'\in\widehat{\B}_{n+1}(\S_k)$.

We consider the sign of the letter $2n+1$ in $\pi$.
First, if $\sgm(\pi)=0$, then
\begin{align*}
\ell_B(\pi) = \ell_B(\sigma) + 2n+1-\max(\pi).
\end{align*}
By the definition of $\ell_B$ and \eqref{eq:inv-hat}, we have
\begin{align*}
\ell_B(\pi) &= 4\inv(\sigma') + |\P^{+}(\sigma')| + \nega(\sigma') - |\P^{-}(\sigma')| - \sum_{i\in\Nega(\sigma')}(4\sigma'_i+1) +2n+1-\max(\pi) \\
&= 4\inv(\sigma') + |\P(\sigma')| - 2|\P^{-}(\sigma')| - 4\biggl(\sum_{i\in\Nega(\sigma')}\sigma'_i\biggr) + 2(n-k+1),
\end{align*}
where $\P^{+}(\sigma'):=\{i\in\P(\sigma'):\,\sigma'_i>0\}$ and $\P^{-}(\sigma'):=\{i\in\P(\sigma'):\,\sigma'_i<0\}$.
Since $|\P(\sigma')|=|\P(\pi')|$, it follows that $(-1)^{\ell_B(\pi)}=(-1)^{|\P(\pi')|}$.
On the other hand, if $\sgm(\pi)=1$, then
\begin{align*}
\ell_B(\pi) = \ell_B(\sigma) + 2n+\max(\pi).
\end{align*}
By the same argument, we have
\begin{align*}
\ell_B(\pi) = 4\inv(\sigma') + |\P(\sigma')| - 2|\P^{-}(\sigma')| - 4\biggl(\sum_{i\in\Nega(\sigma')}\sigma'_i\biggr)+ 2(n+k)-1,
\end{align*}
which implies that $(-1)^{\ell_B(\pi)}=(-1)^{|\P(\pi')|+1}$.
Since $\sgm(\pi)=\sgm(\pi')$, we conclude that
\begin{equation}\label{eq:B-odd-length-fold1}
(-1)^{\ell_B(\pi)}=(-1)^{|\P(\pi')|+\mathsf{sgm}(\pi')}.
\end{equation}
Moreover, as shown in \eqref{eq:B-odd-absinv-fold2}, we replace $|\L(\pi')|$ with $|\P(\pi')|$ and then obtain
\begin{equation}\label{eq:B-odd-length-fold2}
\fdes(\pi) = \fdes(\pi')+2|\P(\pi')|.
\end{equation}
Hence, by \eqref{eq:B-odd-length-fold1} and \eqref{eq:B-odd-length-fold2} it follows that
\begin{align*}
 \sum_{\pi\in\mathcal{F}_{2n+1}}(-1)^{\ell_B(\pi)}t^{\fdes(\pi)}
=& \sum_{\pi'\in\widehat{\B}_{n+1}} \biggl((-1)^{|\P(\pi')|}t^{2|\P(\pi')|}\biggr) (-1)^{\sgm(\pi')} t^{\fdes(\pi')} \\
=& \sum_{\pi'\in\B_{n+1}} \biggl(\sum_{A\subseteq[n]}(-1)^{|A|}t^{2|A|}\biggr) (-1)^{\sgm(\pi')} t^{\fdes(\pi')} \\
=& (1-t^2)^n \sum_{\pi'\in\B_{n+1}} (-1)^{\sgm(\pi')} t^{\fdes(\pi')},
\end{align*}
which completes the proof in view of the Lemma~\ref{lem:lin} below.
\end{proof}

\begin{lemma}\label{lem:lin}
For $n\geq1$, we have
\begin{equation}\label{eq:lin}
\sum_{\pi\in\B_{n+1}} (-1)^{\sgm(\pi)} t^{\fdes(\pi)}= (1-t)\sum_{\pi\in\B_n}t^{2\des_B(\pi)}.
\end{equation}
\end{lemma}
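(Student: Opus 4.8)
The plan is to peel off the largest letter to reduce $\B_{n+1}$ to $\B_n$, and then to match the resulting statistic with $\des_B$ by an involution on $\B_n$. Concretely, each $\pi\in\B_{n+1}$ arises uniquely by inserting one of $\pm(n+1)$ into one of the $n+1$ gaps of the signed permutation $\sigma\in\B_n$ obtained from $\pi$ by deleting the entry $\pm(n+1)$, and $\sgm(\pi)=1$ exactly when the inserted letter is negative. Hence the left-hand side of~\eqref{eq:lin} equals $\sum_{\sigma\in\B_n}C(\sigma)$, where $C(\sigma)$ is the sum over all $2(n+1)$ such insertions of $(-1)^{\sgm}t^{\fdes}$ evaluated at the resulting permutation.

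\emph{Evaluating $C(\sigma)$.} In the flag order extended to $\B_{n+1}$, the letter $n+1$ is the global maximum while $\overline{n+1}$ is the largest negative letter (it lies just below $1$). Using this I would compute $\fdes$ of each insertion by tracking the at most two newly created adjacent pairs, separating the boundary gaps ($j=0$ and $j=n$) from the interior gaps and treating the two choices of sign. Setting $d=\fdes(\sigma)$: at an interior gap the net contribution of the two signs vanishes when $\sigma_j,\sigma_{j+1}$ have the same sign, and equals $\pm\,t^{d}(1-t^{2})$ according as $\sigma_j>0>\sigma_{j+1}$ or $\sigma_j<0<\sigma_{j+1}$ otherwise; summing these over the interior gaps telescopes (the net sign displacement along $\sigma_1,\dots,\sigma_n$) to $t^{d}(1-t^{2})\bigl(\delta(\sigma_1>0)-\delta(\sigma_n>0)\bigr)$. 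The two boundary gaps contribute $(-1)^{\delta(\sigma_1>0)}\,t^{d+1}(1-t)$ and $\delta(\sigma_n>0)\,t^{d}(1-t^{2})$. The $\sigma_n$-terms cancel, and using $d=2\des_F(\sigma)+\delta(\sigma_1<0)$ a short simplification (separating the cases $\sigma_1>0$ and $\sigma_1<0$) gives
\[
C(\sigma)=(1-t)\,t^{\,2\left(\des_F(\sigma)+\delta(\sigma_1<0)\right)}.
\]
Hence the left-hand side of~\eqref{eq:lin} equals $(1-t)\sum_{\sigma\in\B_n}t^{\,2(\des_F(\sigma)+\delta(\sigma_1<0))}$.

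\emph{Matching with $2\des_B$.} It remains to show that $\des_F(\sigma)+\delta(\sigma_1<0)$ and $\des_B(\sigma)$ are equidistributed over $\B_n$. I would prove this with the involution $\sigma\mapsto\sigma^{\ast}$ of $\B_n$ that leaves every positive entry of $\sigma$ fixed and replaces each negative entry by its rank reversal inside the set $B=\{|\sigma_i|:\sigma_i<0\}$ of absolute values of the negative entries: if $|\sigma_i|$ is the $r$th smallest element of $B$, then $\sigma^{\ast}_i=-b$ with $b$ the $r$th largest element of $B$. This is a sign-preserving involution of $\B_n$. Since the flag order restricted to the negative letters is the reverse of the natural order, while the two orders agree on the positive letters and both put every negative letter below every positive one, an entrywise comparison shows that $\sigma$ has a flag descent at $i\in[n-1]$ if and only if $\sigma^{\ast}$ has a descent at $i$ in the sense of $\Des_B$, and that $\delta(\sigma_1<0)=\delta(\sigma^{\ast}_1<0)$ accounts for the position-$0$ descent of $\sigma^{\ast}$ (with $\sigma^{\ast}_0:=0$); thus $\des_B(\sigma^{\ast})=\des_F(\sigma)+\delta(\sigma_1<0)$. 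Summing over $\B_n$ gives $\sum_{\sigma}t^{2(\des_F(\sigma)+\delta(\sigma_1<0))}=\sum_{\tau}t^{2\des_B(\tau)}$, which combined with the previous step proves~\eqref{eq:lin}.

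\emph{Main obstacle.} The delicate point is the bookkeeping in the second step: the change of $\fdes$ under one insertion depends on the signs of $\sigma_1$, of the two neighbours of the inserted letter, and — at the ends — of $\sigma_n$, so the casework must be arranged so that the heavy cancellation is visible (all $\sigma_n$-dependence disappears, and interior gaps with equal-sign neighbours contribute nothing); recasting the interior sum as a telescoping ``sign displacement'' is the cleanest device I see for this. The final step is the conceptual crux but is short once the rank-reversal involution is written down.
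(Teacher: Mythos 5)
Your proof is correct, but it takes a genuinely different route from the paper. I checked the key computations: the insertion of $\pm(n+1)$ into the gaps of $\sigma\in\B_n$ does give, after the telescoping of the interior-gap contributions and the cancellation of the $\sigma_n$-terms, $C(\sigma)=(1-t)\,t^{2(\des_F(\sigma)+\delta(\sigma_1<0))}$ (one verifies separately $\sigma_1>0$, where $t^{d}(1-t^2)-t^{d+1}(1-t)=t^{d}(1-t)$, and $\sigma_1<0$, where only $t^{d+1}(1-t)$ survives); and your rank-reversal involution does satisfy $\des_B(\sigma^{\ast})=\des_F(\sigma)+\delta(\sigma_1<0)$, since the two linear orders agree on comparisons involving a positive letter, are reversed on pairs of negative letters (which the rank reversal compensates), and the position-$0$ $B$-descent is exactly $\sigma^{\ast}_1<0$. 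The paper instead proves the lemma by a generating-function double count: it introduces flag barred permutations, counts them by fixing the underlying signed permutation and then by fixing the number of bars, obtaining $\sum_{\pi\in\B_{n+1}}(-1)^{\sgm(\pi)}t^{\fdes(\pi)}\big/\bigl((1-t)(1-t^2)^{n+1}\bigr)=\sum_{k\geq0}(2k+1)^nt^{2k}$, and then compares with Brenti's Carlitz-type formula $\sum_{\pi\in\B_n}t^{\des_B(\pi)}\big/(1-t)^{n+1}=\sum_{k\geq0}(2k+1)^nt^k$. Your argument is more elementary and essentially bijective: it needs no barred-permutation machinery and no external identity, and it exhibits an explicit statistic-preserving correspondence. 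The paper's approach, on the other hand, yields the finer intermediate formulas for $\B_{n+1}^{+}$ and $\B_{n+1}^{-}$ separately (with the $\lceil(k+1)/2\rceil$ and $\lfloor(k+1)/2\rfloor$ weights) and places the identity in the classical Carlitz/Brenti framework, which is of independent interest. Either argument fully proves the lemma.
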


The proof is based on the technique of  flag barred permutations developed by the second named author~\cite{Lin}, which was originally inspired by Gessel and Stanley~\cite{GS}.
We need some preparations.

For any signed permutation $\pi\in\B_n$, define the {\em sign change function} $\delta^{\pi}: [n]\rightarrow \{0,1\}^n$ as
$$
\delta^{\pi}(i)=\delta_i^{\pi}:=1\text{ if $\pi_i\pi_{i+1}<0$ or $i=n$ and $\pi_n<0$, otherwise $0$}.
$$
Let $\ch(\pi):=\sum_i\delta_i^{\pi}$ be the {\em total sign change} of $\pi$. For instance, for $\pi=\bar{1}\bar{3}42\bar{5}\in\B_5$, we have $\delta^{\pi}=01011$ and $\ch(\pi)=3$.
It was shown in~\cite[Lemma~17]{Lin} that
\begin{equation}\label{eq:ch}
\fdes(\pi)=\ch(\pi)+\sum_{i\in\Des_F(\pi)\atop\delta_i^{\pi}=0}2.
\end{equation}
This relationship motivates the definition of flag barred permutations below.

Let $\pi\in\B_n$.
We call the space between $\pi_i$ and $\pi_{i+1}$ the $i$-th space of $\pi$ for $1\leq i\leq n-1$ and it is called a descent space if $i\in\Des_F(\pi)$.
We also call the space befor $\pi_1$ and the space after $\pi_n$ the $0$-th space and the $n$-th space of $\pi$, respectively.
A {\em flag barred permutation} on $\pi$ is obtained from $\pi$ by inserting bars such that
\begin{itemize}
\item for each $i$ ($1\leq i\leq n$), the $i$-th descent space of $\pi$ with $\delta_i^{\pi}=0$ receives at least $2$ bars;
\item the parity of the number of bars in the $i$-th space ($1\leq i\leq n$) of $\pi$ has the same parity as $\delta_i^{\pi}$.
\end{itemize}
In view of relationship~\eqref{eq:ch}, every barred permutation on $\pi$  has at least $\fdes(\pi)$ bars.
For example, if $\pi=\bar{2}\bar{3}1\bar{5}\bar{4}\in\B_5$, then the flag barred permutation on $\pi$ with the least number of bars is $\bar{2}\bar{3}\vert1\vert\bar{5}\vert\vert\bar{4}\vert$.

We are now ready for the proof of Lemma~\ref{lem:lin}.

\begin{proof}[{\bf Proof of Lemma~\ref{lem:lin}}]
Let $\B_{n+1}^+$ (resp.~$\B_{n+1}^-$) denote the set of signed permutations $\pi\in\B_{n+1}$ such that $\sgm(\pi)=0$ (resp.~$\sgm(\pi)=1$), namely, $n+1$ (resp.~$\overline{n+1}$) appears as a letter of $\pi$. Let $\widetilde{\B}_{n+1}^+$ (resp.~$\widetilde{\B}_{n+1}^-$) be the set of flag barred permutations on $\B_{n+1}^+$ (resp.~$\B_{n+1}^-$).

For each flag barred permutation $\tilde{\pi}$, the weight  of $\tilde{\pi}$, denoted $\wt(\tilde{\pi})$, is defined by $\wt(\tilde{\pi})=t^{\bars(\tilde{\pi})}$, where  $\bars(\tilde{\pi})$ is the number of bars in $\tilde{\pi}$.  We are going to count the flag barred permutations in $\widetilde{\B}_{n+1}^+$ by the weight ``$\wt$'' in two different ways. First, fix a signed permutation $\pi\in\B_{n+1}^+$, and sum over all flag barred permutations on $\pi$. Then, fix the number of bars $k$, and sum over all flag barred permutations with $k$ bars.

Fix a permutation $\pi\in\B_{n+1}^+$.  The flag barred permutation on $\pi$ with the least number of bars, denoted by $\bar{\pi}$, has the weight $t^{\fdes(\pi)}$.  As every  flag barred permutation on $\pi$ can be obtained from $\bar{\pi}$ by further inserting  any  number of bars in the $0$-th space of $\pi$ and an even number of bars in the $i$-th ($1\leq i\leq n+1$) space of $\pi$, we see that counting all the flag barred permutations on $\pi$ according to the weight ``$\wt$'' gives
$$
t^{\fdes(\pi)}(1+t+t^2+\cdots)(1+t^2+t^4+\cdots)^{n+1}=\frac{t^{\fdes(\pi)}}{(1-t)(1-t^2)^{n+1}}.
$$
Therefore, we have
\begin{equation}\label{posi}
\sum_{\tilde{\pi}\in\widetilde{\B}_{n+1}^+}{\wt(\tilde{\pi})}=\frac{\sum_{\pi\in\B_{n+1}^+}t^{\fdes(\pi)}}{(1-t)(1-t^2)^{n+1}}.
\end{equation}

For a fixed integer $k\geq0$, let $\widetilde{\B}_{n+1,k}^+$ be the set of flag barred permutations in $\widetilde{\B}_{n+1}^+$ with $k$ bars. Now, each flag barred permutation from $\widetilde{\B}_{n+1,k}^+$ can be obtained  in two steps. First, we put $k$ bars in one line and insert the letters $1,2,\ldots,n+1$ into the spaces between bars. Second, we determine the signs of all the letters and the order of the letters between each pair of two adjacency bars in the resulting object in such a way that  it becomes a flag barred permutation. The signs and the orders of letters are unique according to the definition of a flag barred permutation, which requires
\begin{itemize}
\item [(a)] all letters between two adjacency bars have the same sign and are in increasing order with respect to the linear order
$$
\bar{1}\cdots<\bar{n}<\overline{n+1}<1<\cdots<n<n+1.
$$
\item [(b)] the sign of an integer in the $(i+1)$-th space (from right to left) of the $k+1$ spaces of the $k$ bars is ``$+$'' (resp.~``$-$'') if $i$ is even (resp.~odd).
\end{itemize}
In general, there are $k+1$ ways to insert the letter $i$ for  $1\leq i\leq n$ and because of rule (b) above, the letter $n+1$ must be inserted in the odd numbered space (from right to left) of the $k+1$ spaces of the $k$ bars in order to have positive sign. Thus,
$$\sum_{\tilde{\pi}\in\widetilde{\B}_{n+1,k}^+}\wt(\pi)=(k+1)^n\lceil(k+1)/2\rceil t^k$$
and hence
$$
\sum_{\tilde{\pi}\in\widetilde{\B}_{n+1}^+}{\wt(\tilde{\pi})}=\sum_{k\geq0}\sum_{\tilde{\pi}\in\widetilde{\B}_{n+1,k}^+}\wt(\pi)=\sum_{k\geq0}(k+1)^n\lceil(k+1)/2\rceil t^k.
$$
Comparing with~\eqref{posi} we get the identity
\begin{equation}\label{posi2}
\frac{\sum_{\pi\in\B_{n+1}^+}t^{\fdes(\pi)}}{(1-t)(1-t^2)^{n+1}}=\sum_{k\geq0}(k+1)^n\lceil(k+1)/2\rceil t^k.
\end{equation}
Similarly, applying the same approach to  $\widetilde{\B}_{n+1}^-$ results in
\begin{equation}\label{nega2}
\frac{\sum_{\pi\in\B_{n+1}^-}t^{\fdes(\pi)}}{(1-t)(1-t^2)^{n+1}}=\sum_{k\geq0}(k+1)^n\lfloor(k+1)/2\rfloor t^k.
\end{equation}
Combining~\eqref{posi2} and~\eqref{nega2} yields that
\begin{equation}\label{eq:nepo}
\frac{\sum_{\pi\in\B_{n+1}} (-1)^{\sgm(\pi)} t^{\fdes(\pi)}}{(1-t)(1-t^2)^{n+1}}=\sum_{k\geq0}(2k+1)^nt^{2k}.
\end{equation}

On the other hand, it is well known (see Theorem~3.4 in~\cite{Brenti} with $q=1$) that
$$
\frac{\sum_{\pi\in\B_n}t^{\des_B(\pi)}}{(1-t)^{n+1}}=\sum_{k\geq0}(2k+1)^nt^k.
$$
It then follows that
$$
\frac{(1-t)\sum_{\pi\in\B_n}t^{2\des_B(\pi)}}{(1-t)(1-t^2)^{n+1}}=\sum_{k\geq0}(2k+1)^nt^{2k},
$$
which gives~\eqref{eq:lin} in view of~\eqref{eq:nepo}
\end{proof}

\section{Coxeter groups of type $D_n$}\label{sec:Dn}

The \emph{even-signed permutation group} $\D_n$ is the subgroup of $\B_n$ defined by
$$\D_n:=\{\pi\in\B_n:\, \nega(\pi)\text{ is even}\},$$
which consists of those signed permutations with an even number of negative entries.
The group $\D_n$ is known as the Coxeter group of type $D_n$ which has the generators $s_0',s_1,\ldots,s_{n-1}$, where $s_0'=(\bar{1},2)$ and $s_i=(i,i+1)$ for $i\geq 1$.
Let $\ell_D$ be the corresponding length function of $\D_n$.
It is known~\cite{BB_05} that the combinatorial description of $\ell_D$ is
$$\ell_D(\pi)=\inv(\pi)-\sum_{i\in\Nega(\pi)}(\pi_i+1).$$
For example, $\ell_D(\bar{2}3\bar{5}\bar{1}\bar{4})=6-(-8)=14$.

For $\pi\in\D_n$ the \emph{D-descent number} and the \emph{D-major index} of $\pi$ are respectively defined in~\cite{BC_04} by
\begin{align*}
\ddes(\pi) &:= \fdes(\pi_1\pi_2\cdots\pi_{n-1}|\pi_n|), \\
\dmaj(\pi) &:= \fmaj(\pi_1\pi_2\cdots\pi_{n-1}|\pi_n|),
\end{align*}
where $\fdes$ and $\fmaj$ are considered in $\B_n=G(2,n)$.
For example, if $\pi=\bar{2}3\bar{5}\bar{1}\bar{4}$,
then $\ddes(\pi)=\fdes(\bar{2}3\bar{5}\bar{1}4)=5$ and $\dmaj(\pi)=\fmaj(\bar{2}3\bar{5}\bar{1}4)=13$.

%

As in $\D_n$ there are only two one-dimensional characters, $1$ and $(-1)^{\ell_D(\pi)}$, it suffices to consider the case when $\chi(\pi)=(-1)^{\ell_D(\pi)}$.
Let us first consider the case for $n$ being even.

For $\pi\in\D_{2n}$ let $i$ be the smallest integer such that the letters $2i-1$ and $2i$ satisfy one of the following conditions.
\begin{enumerate}
\item[(A1)] They are not in adjacent positions.
\item[(A2)] They have opposite signs, and are not both at the last two positions.
\item[(A3)] They are both at the last two positions with negative signs.
\end{enumerate}
Then, let $\eta(\pi)$ be the even-signed permutation obtained from $\pi$ by swapping the two letters $2i-1$ and $2i$.
For examples, $\eta(21\bar{3}\bar{5}6\bar{4})=21\bar{4}\bar{5}6\bar{4}$, $\eta(21\bar{5}63\bar{4})=21\bar{6}53\bar{4}$ and $\eta(21\bar{3}\bar{4}\bar{5}\bar{6})=21\bar{3}\bar{4}\bar{6}\bar{5}$.
Clearly, $\eta$ is an involution on $\D_{2n}$ whose fixed points are those even-signed permutations $\pi$ in which the letters $2i-1$ and $2i$ are adjacent and having the same sign, and both $\pi_{2n-1}$ and $\pi_{2n}$ are positive.
Note that the last property ``both $\pi_{2n-1}$ and $\pi_{2n}$ are positive'' is because of even number of negatives.
Denote by $\mathcal{F}_{2n}^D$ the set of fixed points under the  involution $\eta$.

Let $\pi$ be a non-fixed point under $\eta$ and denote by $\pi'=\eta(\pi)$.
Obviously, $\Nega(\pi)=\Nega(\pi')$.
It is also clear that
\begin{align*}
\Des_F(\pi_1\pi_2\cdots\pi_{2n-1}|\pi_{2n}|) = \Des_F(\pi'_1\pi'_2\cdots\pi'_{2n-1}|\pi'_{2n}|),
\end{align*}
which implies that $\ddes(\pi)=\ddes(\pi')$ and $\dmaj(\pi)=\dmaj(\pi')$.
Assume $i$ is the smallest integer such that $2i-1$ and $2i$ satisfy (A1), (A2) or (A3).
If the two letters $2i-1$ and $2i$ are of the same sign, then
\begin{align*}
\inv(\pi) = \inv(\pi')\pm 1 \quad \text{and} \quad \sum_{i\in\Nega(\pi)}(\pi_i+1) =\sum_{i\in\Nega(\pi')}(\pi'_i+1);
\end{align*}
otherwise, if they are of opposite signs, then
\begin{align*}
\inv(\pi) = \inv(\pi') \quad \text{and} \quad \sum_{i\in\Nega(\pi)}(\pi_i+1) =\left(\sum_{i\in\Nega(\pi')}(\pi'_i+1)\right)\pm 1.
\end{align*}
In either case, it follows that $\ell_D(\pi)=\ell_D(\pi')\pm 1$.
Therefore, we conclude that
\begin{equation}\label{eq:D-ell-fix}
\sum_{\pi\in\D_{2n}} (-1)^{\ell_D} t^{\ddes(\pi)}q^{\dmaj(\pi)}\prod_{i\in\Nega(\pi)}x_i = \sum_{\pi\in\mathcal{F}_{2n}^D} (-1)^{\ell_D} t^{\ddes(\pi)}q^{\dmaj(\pi)}\prod_{i\in\Nega(\pi)}x_i.
\end{equation}

We are ready to derive the following result.

\begin{theorem}\label{thm:signEM-D}
For any positive integer $n$ we have
\begin{align*}
\sum_{\pi\in\D_{2n}} &(-1)^{\ell_D} t^{\ddes(\pi)}q^{\dmaj(\pi)}\prod_{i\in\Nega(\pi)}x_i \\
&= \prod_{i=1}^{n} (1-t^2q^{4i-2}) \sum_{\pi\in\D_{n}} t^{\ddes(\pi)}q^{2\dmaj(\pi)}\prod_{i\in\Nega(\pi)\setminus\{n\}}x_{2i-1}x_{2i}.
\end{align*}
\end{theorem}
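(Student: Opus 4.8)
The plan is to imitate the proof of Theorem~\ref{thm:main-even} in the case $r=2$. By \eqref{eq:D-ell-fix} it suffices to evaluate the sum over the fixed-point set $\mathcal{F}_{2n}^D$ of the involution $\eta$. Observe that $\mathcal{F}_{2n}^D$ is precisely the set of those $\varphi$-fixed points (with $\varphi$ the involution of Section~\ref{sec:Grn-main} for $r=2$) that lie in $\D_{2n}$ and satisfy $\pi_{2n-1},\pi_{2n}>0$; hence I may reuse, essentially verbatim, the contraction bijection $\phi$ from the proof of Theorem~\ref{thm:main-even}, which collapses each adjacent equally-signed block $\{2j-1,2j\}$ of $\pi\in\mathcal{F}_{2n}^D$ to a single letter $j$ (if the block reads $2j-1,2j$) or $\hat{j}$ (if it reads $2j,2j-1$), producing a hatted signed permutation $\pi'$ of $[n]$, with $\P(\pi')$ the set of positions of the hatted letters. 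The only structural wrinkle is that, since the block occupying positions $2n-1,2n$ of $\pi$ is forced to be positive while the order of its two letters stays free, the image consists exactly of the hatted signed permutations $\pi'$ of $[n]$ with $\pi'_n>0$ and arbitrary hat-set $\P(\pi')\subseteq[n]$; denote this set by $\widehat{\D}_n$.

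Next I would record the behaviour of the four relevant quantities under $\phi$. Since $\pi_{2n}>0$, the word $\pi_1\cdots\pi_{2n-1}|\pi_{2n}|$ coincides with $\pi$, so $\ddes(\pi)=\fdes(\pi)$ and $\dmaj(\pi)=\fmaj(\pi)$, and likewise for $\pi'$; therefore \eqref{eq:fdes-hat} and \eqref{eq:fmaj-hat} apply with $r=2$ and give $\ddes(\pi)=\ddes(\pi')+2|\P(\pi')|$ and $\dmaj(\pi)=2\dmaj(\pi')+2\sum_{i\in\P(\pi')}(2i-1)$. The block structure immediately yields $\prod_{i\in\Nega(\pi)}x_i=\prod_{i\in\Nega(\pi')}x_{2i-1}x_{2i}$, and this equals $\prod_{i\in\Nega(\pi')\setminus\{n\}}x_{2i-1}x_{2i}$ because block $n$ is positive, so $n\notin\Nega(\pi')$. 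The genuinely new point is the sign. Starting from $\ell_D(\pi)=\inv(\pi)-\sum_{i\in\Nega(\pi)}(\pi_i+1)$ and using that $\nega(\pi)$ is even, that each negative block contributes an odd amount so $\sum_{i\in\Nega(\pi)}\pi_i\equiv\nega(\pi')\pmod 2$, and the expansion of $\inv(\pi)$ analogous to \eqref{eq:inv-hat} (whose parity is $|\P(\pi')|+\nega(\pi')$), one checks that all $\nega(\pi')$-terms cancel and $(-1)^{\ell_D(\pi)}=(-1)^{|\P(\pi')|}$; in contrast with Theorem~\ref{thm:B-odd-length}, the forced positivity of the last block means no $\sgm$-type correction survives.

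Finally I would sum over $\widehat{\D}_n$. Each $i\in\P(\pi')$ contributes a factor $-t^2q^{4i-2}$, and since $\ddes$, $\dmaj$ and $\Nega$ depend only on the underlying un-hatted signed permutation $\rho$ of $[n]$ (with $\rho_n>0$), the hat-data decouples: summing $\P(\pi')$ over all subsets of $[n]$ produces $\prod_{i=1}^n(1-t^2q^{4i-2})$ times $\sum_\rho t^{\ddes(\rho)}q^{2\dmaj(\rho)}\prod_{i\in\Nega(\rho)\setminus\{n\}}x_{2i-1}x_{2i}$. To finish, note that this summand depends on $\rho$ only through $\rho_1\cdots\rho_{n-1}|\rho_n|$, hence is unchanged if the sign of $\rho_n$ is flipped; since $\pi\mapsto\pi_1\cdots\pi_{n-1}|\pi_n|$ is a bijection from $\D_n$ onto $\{\rho\in\B_n:\rho_n>0\}$, the sum $\sum_\rho$ may be rewritten as $\sum_{\pi\in\D_n}$, which is exactly the right-hand side. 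I expect the sign computation in the second paragraph to be the crux — the careful bookkeeping of the parities of $\inv(\pi)$, $\sum_{i\in\Nega(\pi)}\pi_i$ and $\nega(\pi')$ under the collapse — with the $\D_n$-versus-$\{\rho\in\B_n:\rho_n>0\}$ identification in the last step being the part most prone to an off-by-one slip.
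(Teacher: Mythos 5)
Your proposal is correct and follows essentially the same route as the paper: reduce to the fixed points of $\eta$ via \eqref{eq:D-ell-fix}, contract adjacent equal-signed blocks as in Theorem~\ref{thm:main-even}, verify $(-1)^{\ell_D(\pi)}=(-1)^{|\P(\pi')|}$ together with the $\ddes$, $\dmaj$ and $\Nega$ transformations, and sum over the hat-sets. The only (cosmetic) difference is that you land in hatted signed permutations with positive last entry and pass to $\D_n$ at the end via $\pi\mapsto\pi_1\cdots\pi_{n-1}|\pi_n|$, whereas the paper's map $\phi$ flips the sign of the last entry when needed so as to land in $\widehat{\D}_n$ directly, which is why its product excludes $n$ from $\Nega(\pi')$.
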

\proof
By \eqref{eq:D-ell-fix} it suffices to consider the set $\mathcal{F}_{2n}^D$. Introduce the bijective correspondence $\phi$ between elements in $\mathcal{F}_{2n}^D$ and elements in $\D_n$ with some letters marked a ``hat'', according to the following rules.
\begin{itemize}
\item Each pair of adjacent entries of type $\pm(2j-1),\pm 2j$ in $\mathcal{F}_{2n}^D$ is replaced by $\pm j$.
\item Each pair of adjacent entries of type $\pm 2j,\pm(2j-1)$ in $\mathcal{F}_{2n}^D$ is replaced by $\pm\hat{j}$.
\item After the above two steps, if the number of negatives of the resulting permutation is odd, then change the sign of the last entry from positive to negative.
\end{itemize}
Note that the last rule is well-defined, since after the first two steps, the last entry must be positive due to $\pi_{2n-1}>0$ and $\pi_{2n}>0$ for $\pi\in\mathcal{F}_{2n}^D$.
We denote by $\widehat{\D}_{n}$ the set of all hatted even-signed permutations on $[n]$.
For $\pi'\in\widehat{\D}_{n}$ let $\inv(\pi')$, $\Nega(\pi')$, $\nega(\pi')$, $\ddes(\pi')$ and $\dmaj(\pi')$ be defined by omitting the hats, and let $\P(\pi')$ be the set of the positions of the hatted letters.
For example, $\phi(21\bar{5}\bar{6}8743)=\hat{1}\bar{3}\hat{4}\hat{\bar{2}}$.

Pick $\pi\in\mathcal{F}_{2n}^D$.
Denote by $\pi'=\phi(\pi)$ and $\sigma'=\pi'_1\cdots\pi'_{n-1}|\pi'_n|$.
By the same argument for getting the identity \eqref{eq:inv-hat}, we have
\begin{align*}
\inv(\pi)=4\inv(\sigma')+|\P^{+}(\sigma')|-|\P^{-}(\sigma')|+\nega(\sigma'),
\end{align*}
where $\P^{+}(\sigma'):=\{i\in\P(\sigma'):\,\sigma'_i>0\}$ and $\P^{-}(\sigma'):=\{i\in\P(\sigma'):\,\sigma'_i<0\}$.
By the definition of $\ell_D$, it follows that
\begin{align*}
\ell_D(\pi) &= 4\inv(\sigma')+|\P^{+}(\sigma')|-|\P^{-}(\sigma')|+\nega(\sigma') - \sum_{\sigma'_i<0}\left((4\sigma'_i+1)+2\right) \\
&= 4\inv(\sigma') + |\P(\sigma')| - 2|\P^{-}(\sigma')| - 2\nega(\sigma') - 4\sum_{\sigma'_i<0}\sigma'_i,
\end{align*}
which implies that
\begin{equation}\label{eq:D-even-length-fold1}
(-1)^{\ell_D(\pi)} = (-1)^{|\P(\pi')|}
\end{equation}
due to the fact that $\P(\pi')=\P(\sigma')$.
By the same arguments for getting the identities \eqref{eq:DesF-hat}--\eqref{eq:fmaj-hat}, we have
\begin{align*}
\Des_F(\pi) = \{2i:\,i\in\Des_F(\sigma')\} \cup \{2i-1:\,i\in\P(\sigma')\},
\end{align*}
and thus
\begin{align}
\ddes(\pi) &= \fdes(\pi) = \fdes(\sigma') + 2|\P(\sigma')| = \ddes(\pi') + 2|\P(\pi')| \label{eq:D-even-length-fold2} \\
\dmaj(\pi) &= \fmaj(\pi) = 2\fmaj(\sigma') + 2\sum_{i\in\P(\sigma')}(2i-1) = 2\dmaj(\pi') + \sum_{i\in\P(\pi')}(4i-2), \label{eq:D-even-length-fold3}
\end{align}
where the first equalities of the two identities are due to $\pi_{2n}>0$.
Furthermore, it is easy to see that
\begin{equation}\label{eq:D-even-length-fold4}
\Nega(\pi) = \biguplus_{i\in\Nega(\pi')\setminus\{n\}} \{2i-1,2i\}.
\end{equation}
Combining \eqref{eq:D-even-length-fold1}--\eqref{eq:D-even-length-fold4} yields that
\begin{align*}
& \sum_{\pi\in\mathcal{F}_{2n}^D} (-1)^{\ell_D} t^{\ddes(\pi)}q^{\dmaj(\pi)}\prod_{i\in\Nega(\pi)}x_i \\
=& \sum_{\pi'\in\widehat{\D}_{n}} \left((-1)^{|\P(\pi')|}t^{2|\P(\pi')|}q^{\sum_{i\in\P(\pi')}(4i-2)}\right) t^{\ddes(\pi')}q^{2\dmaj(\pi')}\prod_{i\in\Nega(\pi')\setminus\{n\}}x_{2i-1}x_{2i} \\
=& \sum_{\pi'\in\D_{n}} \left(\sum_{A\subseteq[n]}(-1)^{|A|}t^{2|A|}q^{\sum_{i\in A}(4i-2)}\right) t^{\ddes(\pi')}q^{2\dmaj(\pi')}\prod_{i\in\Nega(\pi')\setminus\{n\}}x_{2i-1}x_{2i} \\
=& \prod_{i-1}^{n}(1-t^2q^{4i-2}) \sum_{\pi'\in\D_{n}}t^{\ddes(\pi')}q^{2\dmaj(\pi')}\prod_{i\in\Nega(\pi')\setminus\{n\}}x_{2i-1}x_{2i}
\end{align*}
and the proof is completed.
\qed

Next, we consider the signed Eulerian identity for $\D_{2n+1}$ with respect to $\ddes$.
For $\pi\in\D_{2n+1}$ let $i$ be the smallest integer such that the letters $2i-1$ and $2i$ satisfy one of the following conditions.
\begin{enumerate}
\item[(B1)] They are not in adjacent positions.
\item[(B2)] They have opposite signs, and are not both at the last two positions.
\item[(B3)] They are both at the last two positions with negative signs.
\item[(B4)] They are both at the last two positions with negative signs, and $\pi_{2n}<0$ and $\pi_{2n+1}<0$.
\end{enumerate}
Note that conditions (B3) and (B4) can be simply combined as ``are both at the last two positions and $\pi_{2n}<0$''; however, we separate it into two cases for the convenience of discussion.
Next, let $\iota(\pi)$ be the even-signed permutation obtained from $\pi$ by swapping the two letters $2i-1$ and $2i$.
For examples, $\iota(58\bar{7}\bar{1}\bar{2}963\bar{4})=68\bar{7}\bar{1}\bar{2}953\bar{4}$, $\iota(58\bar{7}\bar{1}\bar{2}96\bar{3}4)=58\bar{7}\bar{1}\bar{2}96\bar{4}3$, and $\iota(58\bar{7}\bar{1}\bar{2}96\bar{3}\bar{4})=58\bar{7}\bar{1}\bar{2}96\bar{4}\bar{3}$.
It is easy to see that $\iota$ is an involution on $\D_{2n+1}$ whose fixed points are those even-signed permutations $\pi$ having the following properties.
\begin{itemize}
\item For $i\in[n]$ the letters $2i-1$ and $2i$ are adjacent.
\item For $i\in[n]$ the letters $2i-1$ and $2i$ have the same sign if both of them are not at the last two positions.
\item If $2i-1$ and $2i$ appear at the last two positions for some $i$, then $\pi_{2n}>0$.
\end{itemize}
Let $\mathcal{F}_{2n+1}^D$ denote the set of fixed points under $\iota$.
For example,
\begin{align*}
\mathcal{F}_3^D=\{123,213,312,321,\bar{1}\bar{2}3,\bar{2}\bar{1}3,\bar{3}1\bar{2},\bar{3}2\bar{1}\}.
\end{align*}
Recall that $\sgm(\pi)$ records the sign of the maximum letter in $\pi$.
It is clear that, for $\pi\in\F_{2n+1}^D$, $\sgm(\pi)=1$ if and only if $\pi_{2n+1}<0$.

Let $\pi$ be a non-fixed point under $\iota$ and denote by $\pi'=\iota(\pi)$.
Assume $i$ is the smallest integer such that $2i-1$ and $2i$ satisfy (B1), (B2), (B3) or (B4).
Observe that $\iota$ is identical to $\eta$, which is defined on $\D_{2n}$, if the last condition (B4) is omitted.
So, we have $\ell_D(\pi)=\ell_D(\pi')\pm 1$ and $\ddes(\pi)=\ddes(\pi')$ if $2i-1$ and $2i$ satisfy (B1), (B2) or (B3).
If they satisfy (B4), it is easy to see that $\inv(\pi) = \inv(\pi')$ and $\sum_{i\in\Nega(\pi)}(\pi_i+1) = \sum_{i\in\Nega(\pi')}(\pi'_i+1)\pm 1$, which imply $\ell_D(\pi)=\ell_D(\pi')\pm 1$; and, $\ddes(\pi)=\ddes(\pi')$ due to $\Des_F(\pi)=\Des_F(\pi')$ and $\pi_{2n+1}>0$, $\pi'_{2n+1}>0$.
Therefore, we conclude that
\begin{equation}\label{eq:D-odd-ell-fix}
\sum_{\pi\in\D_{2n+1}} (-1)^{\ell_D} t^{\ddes(\pi)} = \sum_{\pi\in\mathcal{F}_{2n+1}^D} (-1)^{\ell_D} t^{\ddes(\pi)}.
\end{equation}

We are ready to derive the following result.

\begin{theorem}\label{thm:D-odd-length}
For any positive integer $n$, we have
\begin{align*}
\sum_{\pi\in\D_{2n+1}}(-1)^{\ell_D(\pi)}t^{\ddes(\pi)} = (1-t^2)^n\sum_{\pi\in\D_{n+1}}t^{\ddes(\pi)}.
\end{align*}
\end{theorem}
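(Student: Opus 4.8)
The plan is to mimic the structure already used for Theorem~\ref{thm:signEM-D} and Theorem~\ref{thm:B-odd-length}: reduce to the set of fixed points $\mathcal{F}_{2n+1}^D$ of the involution $\iota$ via~\eqref{eq:D-odd-ell-fix}, then set up an explicit bijection from $\mathcal{F}_{2n+1}^D$ to a family of \emph{hatted} even-signed permutations in $\D_{n+1}$, and finally recognize the weighted count over the hats as the product $(1-t^2)^n$. Concretely, I would define $\phi:\mathcal{F}_{2n+1}^D\to\widehat{\D}_{n+1}$ by the same collapsing rules as in the type $D_{2n}$ case --- a block $\pm(2j-1),\pm 2j$ becomes $\pm j$, a block $\pm 2j,\pm(2j-1)$ becomes $\pm\hat{j}$ --- together with the rule handling the terminal letter $\pm(2n+1)$: it is sent to $\pm(n+1)$ in the last position, with the sign of the last entry flipped if needed to keep the number of negatives even. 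Since for $\pi\in\mathcal{F}_{2n+1}^D$ we have $\pi_{2n}>0$, the letter $2n+1$ occupies one of the last two slots only when it is $\pi_{2n+1}$, and in general it sits in some position; as in Theorem~\ref{thm:B-odd-absinv} its position determines which $\S_k$-stratum the image lives in, but after summing over all of $\D_{2n+1}$ these strata reassemble into all of $\D_{n+1}$, so I can state the result directly in the clean (non-refined) form.

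The arithmetic core is to show $(-1)^{\ell_D(\pi)}=(-1)^{|\P(\pi')|}$ and $\ddes(\pi)=\ddes(\pi')+2|\P(\pi')|$, where $\pi'=\phi(\pi)$ and $\P(\pi')$ is the set of positions of hatted letters. For the length: write $\sigma$ (resp.\ $\sigma'$) for the object obtained by deleting $\pm(2n+1)$ (resp.\ $\pm(n+1)$); by the same inversion bookkeeping as in~\eqref{eq:inv-hat} and the computation in the proof of Theorem~\ref{thm:signEM-D} one gets $\ell_D(\sigma)=4\inv(\sigma')+|\P(\sigma')|-2|\P^-(\sigma')|-2\nega(\sigma')-4\sum_{\sigma'_i<0}\sigma'_i$, and then the contribution of the deleted maximal letter --- either $2n+1-\max(\pi)$ or $2n+1+\max(\pi)-1$ depending on $\sgm(\pi)$, with $\max(\pi)$ odd --- together with the sign-flip correction built into $\phi$ cancels modulo $2$ against the $\sgm$ discrepancy, leaving $(-1)^{\ell_D(\pi)}=(-1)^{|\P(\pi')|}$. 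This is exactly the point where the third rule of $\phi$ is needed: unlike the $B_{2n+1}$ case (Theorem~\ref{thm:B-odd-length}), where an uncancelled $(-1)^{\sgm(\pi')}$ survived and forced the auxiliary Lemma~\ref{lem:lin}, here the even-negativity constraint absorbs that parity, which is why the right-hand side is simply $\sum_{\pi\in\D_{n+1}}t^{\ddes(\pi)}$ with no extra factor. For $\ddes$: since $\pi_{2n+1}>0$ for every fixed point, $\ddes(\pi)=\fdes(\pi)$ and $\ddes(\pi')=\fdes(\sigma'$ with the last sign ignored$)$, and the flag-descent set splits as in~\eqref{eq:DesF-hat}, giving the claimed $+2|\P(\pi')|$ shift.

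With these two identities in hand the final computation is routine:
\[
\sum_{\pi\in\mathcal{F}_{2n+1}^D}(-1)^{\ell_D(\pi)}t^{\ddes(\pi)}
=\sum_{\pi'\in\widehat{\D}_{n+1}}(-1)^{|\P(\pi')|}t^{2|\P(\pi')|}\,t^{\ddes(\pi')}
=\sum_{\pi'\in\D_{n+1}}\Bigl(\sum_{A\subseteq[n]}(-1)^{|A|}t^{2|A|}\Bigr)t^{\ddes(\pi')}
=(1-t^2)^n\sum_{\pi'\in\D_{n+1}}t^{\ddes(\pi')},
\]
using that the hat-positions range freely over subsets of $[n]$ (the slot of the maximal letter is excluded from being hatted, but summing over all placements of that letter restores the full group $\D_{n+1}$). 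Combined with~\eqref{eq:D-odd-ell-fix} this is the theorem. The main obstacle I anticipate is the parity accounting for $\ell_D$ across the three subcases of the terminal letter (its sign, whether it was produced by a hat-flip, and the parity of $\max(\pi)$); everything else parallels the already-established proofs, but one must check carefully that no stray sign survives --- precisely the subtlety that distinguished the $B$ and $D$ odd cases.
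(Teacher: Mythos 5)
Your overall plan is indeed the paper's: reduce to $\F_{2n+1}^D$ via \eqref{eq:D-odd-ell-fix}, collapse pairs to a hatted element of $\D_{n+1}$, and sum over hat-sets to extract $(1-t^2)^n$. The problem is that your description of the fixed points, and hence of the collapsing map, is wrong at exactly the point where the odd $D$ case differs from the even one. Fixed points of $\iota$ need not satisfy $\pi_{2n}>0$ (e.g.\ $\bar{1}\bar{2}3\in\F_3^D$) nor $\pi_{2n+1}>0$ (e.g.\ $\bar{3}1\bar{2}\in\F_3^D$): a pair occupying the last two positions may have \emph{opposite} signs (only $\pi_{2n}>0$ is forced there), and when $\pm(2n+1)$ sits at the end the preceding pair may be both negative. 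Consequently the two rules you import from the $\D_{2n}$ proof (same-sign blocks $\mapsto \pm j$ or $\pm\hat{j}$) do not even define $\phi$ on all of $\F_{2n+1}^D$; one needs separate rules for a terminal pair of type $(2j-1),\pm 2j$ or $2j,\pm(2j-1)$ (replace it by an unsigned $j$ or $\hat{j}$, then restore even negativity by flipping the sign of the last entry), and without them the images do not sweep out $\D_{n+1}$ with freely chosen hat-sets. (Also, $\pm(2n+1)$ keeps its slot, becoming $\pm(n+1)$ at position $k$ when it occupied position $2k-1$; it is not sent to the last position.) Your justification of $\ddes(\pi)=\ddes(\pi')+2|\P(\pi')|$ likewise rests on the false premise ``$\pi_{2n+1}>0$ for every fixed point''; since the last letter may be negative, the descent bookkeeping has to be carried out on $\pi_1\cdots\pi_{2n}|\pi_{2n+1}|$ and its collapse, because $\ddes$ is defined through the absolute value of the last entry.

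The second genuine gap is the parity identity $(-1)^{\ell_D(\pi)}=(-1)^{|\P(\pi')|}$, which you assert will ``cancel modulo $2$'' but do not prove. The formula you quote for $\ell_D$ was derived for even-case fixed points, where every pair --- including the last --- has equal signs and the last two entries are positive; after deleting $\pm(2n+1)$ from an odd-case fixed point the final pair may be mixed-sign or both negative, and the sign-flip built into $\phi$ changes the collapsed word relative to the naive collapse, so that formula does not transfer verbatim. Establishing the parity claim requires a genuine case analysis (sign of the letter $2n+1$, whether $\pi_{2n}<|\pi_{2n+1}|$ or $\pi_{2n}>|\pi_{2n+1}|$, sign of the collapsed last entry), tracking both $\inv(\pi)$ and $\sum_{\pi_i<0}(\pi_i+1)$ modulo $2$; this is the heart of the proof, and your proposal flags it as ``the main obstacle'' without supplying an argument. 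Your structural intuition --- that even negativity is what removes the $(-1)^{\sgm}$ factor which forced Lemma~\ref{lem:lin} in type $B$ --- is correct, but it is the conclusion of that computation, not a substitute for it; as written, the proof is incomplete precisely where the odd $D$ case genuinely differs from the odd $B$ case.
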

\proof
By \eqref{eq:D-odd-ell-fix} it suffices to consider the set $\mathcal{F}_{2n+1}^D$.
Introduce the bijective correspondence $\phi$ between elements in $\mathcal{F}_{2n+1}^D$ and elements in $\D_{n+1}$ with some letters marked a ``hat'', according to the following rules.
\begin{itemize}
\item Each pair of adjacent entries of type $\pm(2j-1),\pm 2j$ in $\mathcal{F}_{2n+1}^D$ but not at the last two positions is replaced by $\pm j$.
\item Each pair of adjacent entries of type $\pm 2j,\pm(2j-1)$ in $\mathcal{F}_{2n+1}^D$ but not at the last two positions is replaced by $\pm\hat{j}$.
\item The pair of entries of type $(2j-1),\pm 2j$ at the last two positions in $\mathcal{F}_{2n+1}^D$ is replaced by $j$.
\item The pair of entries of type $2j,\pm(2j-1)$ at the last two positions in $\mathcal{F}_{2n+1}^D$ is replaced by $\hat{j}$.
\item The entry $\pm(2n+1)$ in $\mathcal{F}_{2n+1}^D$ is replaced by $\pm(n+1)$.
\item After the above steps, if the number of negatives of the resulting permutation is odd, then change the sign of the last entry.
\end{itemize}
We denote by $\widehat{\D}_{n+1}$ the set of all hatted even-signed permutations on $[n+1]$.
For $\pi'\in\widehat{\D}_{n}$ let $\inv(\pi')$, $\Nega(\pi')$, $\nega(\pi')$, $\ddes(\pi')$ and $\dmaj(\pi')$ be defined by omitting the hats, and let $\L(\pi')$ and $\P(\pi')$ denote respectively the sets of the hatted letters and their positions.
Note that $|\L(\pi')|=|\P(\pi')|\leq n$ since the letter $n+1$ will not be hatted.
For examples, $\phi(21\bar{5}\bar{6}87\bar{9}4\bar{3})=\hat{1}\bar{3}\hat{4}\bar{5}\hat{2}$, $\phi(\bar{2}\bar{1}\bar{5}\bar{6}87\bar{9}3\bar{4})=\hat{\bar{1}}\bar{3}\hat{4}\bar{5}\bar{2}$ and $\phi(21\bar{5}\bar{6}87439)=\hat{1}\bar{3}\hat{4}\hat{2}\bar{5}$.

Pick $\pi\in\F_{2n+1}^D$ and define $\pi'=\phi(\pi)$.
We  first claim that
\begin{equation}\label{eq:D-odd-length-fold1}
\ell_D(\pi)\equiv |\P(\pi')| \qquad (\text{mod }2).
\end{equation}
We will only consider the case that $\sgm(\pi)=1$ (i.e., $\pi_{2n+1}<0$) since the other cases can be dealt with in the same way. In this case, we have $\pi_{2n+1}<0$ and  $\pi_{2n+1}\neq\overline{2n+1}$, which forces the pair $(\pi_{2n},\pi_{2n+1})$ to be $(2j,\overline{2j-1})$ or $(2j-1,\overline{2j})$ for some $1\leq j\leq n$ and $\overline{2n+1}$ must appear as a letter in $\pi$.
For convenience, let $\sigma$ (resp., $\sigma'$) be the resulting signed permutation (resp., hatted signed permutation) obtained from $\pi$ (resp., $\pi'$) by removing the letter $\overline{2n+1}$ (resp., $\overline{n+1}$).
Observe that there are $2(|\sigma'_n|-1)$ entries among $\{\sigma_1,\sigma_2,\ldots,\sigma_{2n-2}\}$ whose absolute values are smaller than $|\sigma_{2n}|$.
Recall that $\max(\pi)$ and $\max(\pi')$ denote the positions of $\overline{2n+1}$ and $\overline{n+1}$ in $\pi$ and $\pi'$, respectively. We need to distinguish two cases.

\smallskip

{\bf Case 1:} $\pi_{2n}<|\pi_{2n+1}|$.
If $\sigma'_n>0$, then $$\inv(\sigma)=4\inv(\sigma') + |\P^+(\sigma')| + \nega(\sigma') - |\P^-(\sigma')| + 2(|\sigma'_n|-1) +1;$$
otherwise, $\sigma'_n<0$ and $$\inv(\sigma)=4\inv(\sigma') + |\P^+(\sigma')| + \nega(\sigma') - |\P^-(\sigma')| - 2(|\sigma'_n|-1).$$
As $\max(\pi) = 2\max(\pi')-1$, we have
\begin{align*}
\inv(\pi) &= \inv(\sigma) + \max(\pi)-1 \\
&= 4\inv(\sigma') + |\P(\sigma')| - 2|\P^-(\sigma')| + \nega(\sigma') + 2\max(\pi') +
\begin{cases}
2|\sigma'_n|-3, & \text{if }\sigma'_n>0\\
-2|\sigma'_n|, & \text{if }\sigma'_n<0.
\end{cases}
\end{align*}
Moreover, since $\sigma_{2n}=-2|\sigma'_n|$, it follows that
\begin{align*}
\sum_{\pi_i<0}(\pi_i+1) &= -(2n+1) + \sum_{\sigma_i<0}(\sigma_i+1) \\
&= -2n + \sum_{\sigma'_i<0 \text{ and } i<n}(4\sigma'_i+3) +
\begin{cases}
-2\sigma'_n+1, & \text{if }\sigma'_n>0 \\
2\sigma'_n+1, & \text{if }\sigma'_n<0
\end{cases}\\
&= -2n + 2\sum_{\sigma'_i<0 \text{ and } i<n}(2\sigma'_i+1) + \nega(\sigma') +
\begin{cases}
-2\sigma'_n+1, & \text{if }\sigma'_n>0 \\
2\sigma'_n, & \text{if }\sigma'_n<0
\end{cases}.
\end{align*}
Hence we have
\begin{align*}
\ell_D(\pi) = \inv(\pi) - \sum_{\pi_i<0}(\pi_i+1) \equiv |\P(\sigma')| = |\P(\pi')| \quad (\text{mod }2).
\end{align*}


{\bf Case 2:} $\pi_{2n}>|\pi_{2n+1}|$.
By  similar arguments as in Case 1, we have
\begin{align*}
\inv(\pi) = 4\inv(\sigma') + |\P(\sigma')| - 2|\P^-(\sigma')| + \nega(\sigma') + 2\max(\pi') +
\begin{cases}
2|\sigma'_n|-4, & \text{if }\sigma'_n>0\\
-2|\sigma'_n|+1, & \text{if }\sigma'_n<0
\end{cases}
\end{align*}
and
\begin{align*}
\sum_{\pi_i<0}(\pi_i+1) &= -(2n+1) + \sum_{\sigma_i<0}(\sigma_i+1) \\
&= -2n + \sum_{\sigma'_i<0 \text{ and } i<n}(4\sigma'_i+3) +
\begin{cases}
-2\sigma'_n+2, & \text{if }\sigma'_n>0 \\
2\sigma'_n+2, & \text{if }\sigma'_n<0
\end{cases}\\
&= -2n + 2\sum_{\sigma'_i<0 \text{ and } i<n}(2\sigma'_i+1) + \nega(\sigma') +
\begin{cases}
-2\sigma'_n+2, & \text{if }\sigma'_n>0 \\
2\sigma'_n+1, & \text{if }\sigma'_n<0
\end{cases}.
\end{align*}

It also concludes that
\begin{align*}
\ell_D(\pi) = \inv(\pi) - \sum_{\pi_i<0}(\pi_i+1) \equiv |\P(\sigma')| = |\P(\pi')| \quad (\text{mod }2).
\end{align*}

Let $\rho=\pi_1\pi_2\cdots\pi_{2n}|\pi_{2n+1}|$ and $\rho'=\pi'_1\pi'_2\cdots\pi'_{n}|\pi'_{n+1}|$.
Similar to the discussions for getting the identity \eqref{eq:B-odd-absinv-fold2}, we have $\fdes(\rho) = \fdes(\rho')+2|\L(\rho')|$ and thus
\begin{equation}\label{eq:D-odd-length-fold2}
\ddes(\pi) = \fdes(\rho) = \fdes(\rho')+2|\L(\rho')| = \ddes(\pi')+2|\L(\pi')|.
\end{equation}
As $|\L(\pi')|=|\P(\pi')|$ for $\pi'\in\widehat{\D}_{n+1}$, combining \eqref{eq:D-odd-length-fold1} and \eqref{eq:D-odd-length-fold2} yields that
\begin{align*}
& \sum_{\pi\in\mathcal{F}_{2n+1}^D} (-1)^{\ell_D} t^{\ddes(\pi)} = \sum_{\pi'\in\widehat{\D}_{n+1}}(-1)^{|\L(\pi')|}t^{\ddes(\pi')+2|\L(\pi')|} \\
=& \sum_{\pi'\in\D_{n+1}} \left(\sum_{A\subseteq[n]}(-1)^{|A|}t^{2|A|}\right) t^{\ddes(\pi')} = (1-t^2)^n \sum_{\pi'\in\D_{n+1}}t^{\ddes(\pi')},
\end{align*}
as desired.
\qed

\section{More Sign-balance Distributions for $\B_n$ and $\D_n$}\label{sec:sign-balance}
In this section, we consider the sign-balance
\begin{align*}
\sum_{\pi\in\B_n\text{ or }\D_n}\chi(\pi)t^{\stat_1(\pi)}q^{\stat_2(\pi)},
\end{align*}
where $\chi$ is a one-dimensional character on $\B_n$ or $\D_n$, and the statistics $\stat_1$ and $\stat_2$ are type $B$ or $D$ descent number and major index, respectively.
We shall derive the generating functions of these sign-balances.

\subsection{Sign-balance results on $\B_n$}

Recall that, for $\pi\in\B_n$, $\des_B(\pi)$ and $\maj_B(\pi)$ are the type $B$ descent number and major index of $\pi$.

\begin{theorem}\label{thm:B-GF-length}
For any positive integer $n$, we have
\begin{align*}
\sum_{\pi\in\B_n}(-1)^{\ell_B(\pi)}t^{\des_B(\pi)}q^{\maj_B(\pi)} = \sum_{\pi\in\B_n}(-1)^{\nega(\pi)}t^{\des_B(\pi)}q^{\maj_B(\pi)} = \prod_{i=0}^{n-1}\left(1-tq^i\right).
\end{align*}
\end{theorem}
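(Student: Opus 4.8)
The plan is to reduce \emph{both} signed sums to one and the same sum over the $2^{n}$ signed permutations whose underlying permutation is the identity, and then to evaluate that sum by expanding over subsets. First I would record the character factorisation
$(-1)^{\ell_B(\pi)}=(-1)^{\inv(|\pi|)}\cdot(-1)^{\nega(\pi)}$ for every $\pi\in\B_n$; this is immediate from Theorem~\ref{thm:1-dm} with $r=2$ and $\omega=-1$, since $\chi_{1,1}=\chi_{1,0}\cdot\chi_{0,1}$, where $\chi_{1,1}(\pi)=(-1)^{\ell_B(\pi)}$, $\chi_{1,0}(\pi)=(-1)^{\inv(|\pi|)}$ and $\chi_{0,1}(\pi)=(-1)^{\nega(\pi)}$. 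Hence a single sign-reversing involution that handles the factor $(-1)^{\nega}$ while leaving $\inv(|\pi|)$, $\des_B$ and $\maj_B$ untouched will settle both equalities at once.

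The involution $\Phi$: for $\pi\in\B_n$ with $|\pi|\ne\mathrm{id}$, let $m=m(\pi)$ be the least index with $\pi_m\ne\pm m$, and let $\Phi(\pi)$ be the signed permutation obtained by toggling the sign of the unique entry of $\pi$ of absolute value $m$. It is routine that $\Phi$ is a fixed-point-free involution on $\{\pi\in\B_n:|\pi|\ne\mathrm{id}\}$, that $\inv(|\Phi(\pi)|)=\inv(|\pi|)$, and that $\nega(\Phi(\pi))=\nega(\pi)\pm1$. The point that needs care is that $\Des_B$ is preserved, hence also $\des_B$ and $\maj_B$: by minimality of $m$, positions $1,\dots,m-1$ of $\pi$ are already occupied by the entries $\pm1,\dots,\pm(m-1)$, so the entry $\pm m$ sits in a position $\ge m+1\ (\ge 2)$ and each of its (at most two) neighbours has absolute value $>m$. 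Consequently no value strictly between $\overline m$ and $m$ in the order $\overline n<\cdots<\overline1<1<\cdots<n$ — equivalently, nothing of absolute value $<m$, and not $\pi_0=0$ — is adjacent to $\pm m$, so toggling its sign reverses none of the comparisons that define $\Des_B$.

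Applying $\Phi$ then makes the orbit sums cancel in pairs in $\sum_{\pi\in\B_n}(-1)^{\nega(\pi)}t^{\des_B(\pi)}q^{\maj_B(\pi)}$, leaving only the signed identities; the identical cancellation occurs in $\sum_{\pi\in\B_n}(-1)^{\inv(|\pi|)}(-1)^{\nega(\pi)}t^{\des_B(\pi)}q^{\maj_B(\pi)}$ because $(-1)^{\inv(|\pi|)}$ is also $\Phi$-invariant, and on a signed identity $\inv(|\pi|)=0$. Thus both sums equal
$$\sum_{\substack{\pi\in\B_n\\ |\pi|=\mathrm{id}}}(-1)^{\nega(\pi)}t^{\des_B(\pi)}q^{\maj_B(\pi)}.$$
To finish I would write such a $\pi$ as $\pi_i=\varepsilon_i\,i$ with $\varepsilon_i\in\{\pm1\}$ and put $T=\Nega(\pi)=\{i:\varepsilon_i=-1\}$; a short check gives $\Des_B(\pi)=\{\,i-1:i\in T\,\}$, so $\des_B(\pi)=\nega(\pi)=|T|$ and $\maj_B(\pi)=\sum_{i\in T}(i-1)$. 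Summing over all $T\subseteq[n]$ yields
$$\sum_{T\subseteq[n]}(-t)^{|T|}q^{\sum_{i\in T}(i-1)}=\prod_{i=1}^{n}\bigl(1-tq^{\,i-1}\bigr)=\prod_{i=0}^{n-1}\bigl(1-tq^{\,i}\bigr),$$
which is the claimed identity.

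The main obstacle is the verification that $\Phi$ preserves $\Des_B$ (hence $\maj_B$); everything else is bookkeeping. I expect it to reduce exactly to the two observations above — that minimality of $m$ keeps $\pm m$ out of position $1$ and forces both neighbours of $\pm m$ to have absolute value exceeding $m$ — so that toggling that one sign can neither create nor destroy a descent.
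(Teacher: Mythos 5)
Your proof is correct and follows essentially the same route as the paper: the same sign-toggling involution on the entry of smallest misplaced absolute value, the same verification that $\Des_B$ (hence $\des_B$, $\maj_B$) is preserved, reduction to the fixed points with $|\pi_i|=i$, and the same subset expansion giving $\prod_{i=0}^{n-1}(1-tq^i)$. The only cosmetic difference is that you dispose of the $(-1)^{\ell_B}$ case via the character factorization $(-1)^{\ell_B(\pi)}=(-1)^{\inv(|\pi|)}(-1)^{\nega(\pi)}$, whereas the paper tracks the parity of $\ell_B$ directly under the involution and on the fixed points; both are fine.
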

\proof
Let $k$ be the smallest integer such that $|\pi_k|\neq k$.
Assume $|\pi_i|=k$ for some $i$.
Note that $i>k$.
Then, define $\theta(\pi)$ to be the signed permutation obtained from $\pi$ by changing the sign of $\pi_i$.
For example, if $\pi=\bar{1}25\bar{4}\bar{3}$, then $\theta(\pi)=\bar{1}25\bar{4}3$, where $k=3$ and $i=5$; and, if $\pi=\bar{1}2\bar{3}45$, then $\theta(\pi)=\pi$ since no such $k$ exists.
It is clear that $\theta$ is an involution, where the fixed points are those signed permutations $\pi\in\B_n$ with $|\pi_1|=1,|\pi_2|=2,\ldots,|\pi_n|=n$.
Let $\mathcal{I}_n$ denote the set of fixed points.

Pick a non-fixed point $\pi\in\B_n\backslash\mathcal{I}_n$ under $\theta$.
Let $k$ be the smallest integer such that $|\pi_k|\neq k$ and denote by $\pi'=\theta(\pi)$.
Obviously, $\nega(\pi')=\nega(\pi)\pm 1$.
Meanwhile, it is not hard to see that
\begin{align*}
\ell_B(\pi') = \begin{cases}
\ell_B(\pi) + 2k-1, & \text{if } \pi_i=k, \\
\ell_B(\pi) - (2k-1), & \text{if } \pi_i=-k.
\end{cases}
\end{align*}
Furthermore, since $|\pi_{i-1}|>|\pi_i|=k$, $\theta$ preserves $\Des_B$, which yields
\begin{align*}
\des_B(\pi') = \des_B(\pi) \quad\text{and}\quad \maj_B(\pi')=\maj_B(\pi).
\end{align*}
It follows that
\begin{equation}\label{eq:B-neg-fix}
\sum_{\pi\in\B_n}(-1)^{\nega(\pi)}t^{\des_B(\pi)}q^{\maj_B(\pi)} = \sum_{\pi\in\mathcal{I}_n}(-1)^{\nega(\pi)}t^{\des_B(\pi)}q^{\maj_B(\pi)},
\end{equation}
and
\begin{equation}\label{eq:B-length-fix}
\sum_{\pi\in\B_n}(-1)^{\ell_B(\pi)}t^{\des_B(\pi)}q^{\maj_B(\pi)} = \sum_{\pi\in\mathcal{I}_n}(-1)^{\ell_B(\pi)}t^{\des_B(\pi)}q^{\maj_B(\pi)}.
\end{equation}

Since each fixed point $\pi$ in $\mathcal{I}_n$ is of the form $|\pi_i|=i$ for $i\in[n]$, we have $\Des_B(\pi)=\Nega(\pi)-1$, which implies that $\des_B(\pi)=\nega(\pi)$ and $\maj_B(\pi)=\sum_{i\in\Nega(\pi)}(i-1)$.
Hence, the right-hand-side of \eqref{eq:B-neg-fix} is equal to
\begin{align*}
\sum_{\pi\in\mathcal{I}_n}(-t)^{\nega(\pi)}q^{\sum_{i\in\Nega(\pi)}(i-1)} = \sum_{A\subseteq[n]}(-t)^{|A|}q^{\sum_{i\in A}(i-1)} =  \prod_{i=0}^{n-1}\left(1-tq^i\right).
\end{align*}
On the other hand, it is easy to see that $\inv(\pi)=\sum_{i\in\Nega(\pi)}(i-1)$ for $\pi\in\mathcal{I}_n$.
Then, $\ell_B(\pi)=\sum_{i\in\Nega(\pi)}(2i-1)$, and hence the right-hand-side of \eqref{eq:B-length-fix} is equal to
\begin{align*}
& \sum_{\pi\in\mathcal{I}_n}(-1)^{\sum_{i\in\Nega(\pi)}(2i-1)}t^{|\Nega(\pi)|}q^{\sum_{i\in\Nega(\pi)}(i-1)} \\
= & \sum_{A\subseteq[n]}(-1)^{\sum_{i\in A}(2i-1)}t^{|A|}q^{\sum_{i\in A}(i-1)} \\
= & \prod_{i=0}^{n-1}\left(1-tq^i\right).
\end{align*}
This completes the proof.
\qed

\begin{theorem}\label{thm:B-GF-absinv}
For any positive integer $n$, we have
\begin{align*}
\sum_{\pi\in\B_n}(-1)^{\inv(|\pi|)}t^{\des_B(\pi)}q^{\maj_B(\pi)} = \left(1+(-1)^{n-1}tq^{n-1}\right)\prod_{i=0}^{n-2}\left(1-tq^i\right).
\end{align*}
\end{theorem}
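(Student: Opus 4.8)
The plan is to establish the identity by a sign-reversing involution, following the pattern of the proof of Theorem~\ref{thm:B-GF-length}. The first point to observe is that the involution $\theta$ used there is of no help for the present character: since $\theta$ only flips the sign of a single entry, it leaves the underlying permutation $|\pi|$, and hence $(-1)^{\inv(|\pi|)}$, unchanged (it changes $\ell_B$ and $\nega$ each by an odd amount, consistently with $\chi_{1,0}=\chi_{1,1}\,\chi_{0,1}$, i.e.\ $(-1)^{\inv(|\pi|)}=(-1)^{\ell_B(\pi)+\nega(\pi)}$). What is needed instead is an involution $\psi$ on $\B_n$ that alters $|\pi|$ by a single transposition, so that $(-1)^{\inv(|\pi|)}$ changes sign, while fixing the descent set $\Des_B(\pi)$ with respect to the order $\bar{n}<\dots<\bar{1}<1<\dots<n$ (with $\pi_0=0$), and therefore preserving both $\des_B$ and $\maj_B$.

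Since interchanging two adjacent entries always toggles the descent status at the space between them, $\psi$ cannot be a nearest-neighbour swap; the natural candidate is a swap of two entries two positions apart. Concretely, one scans $i=1,2,\dots$ for the first index $i$ at which $\pi_i$ is the largest or the smallest of $\pi_{i-1},\pi_i,\pi_{i+1}$ and, moreover, $\pi_{i-1}$ and $\pi_{i+1}$ lie on the same side of each of $\pi_{i-2}$ and $\pi_{i+2}$; then $\psi(\pi)$ is obtained by interchanging the entries in positions $i-1$ and $i+1$. Such a swap changes $|\pi|$ by a transposition (flipping $(-1)^{\inv(|\pi|)}$) and leaves every comparison $\pi_j>\pi_{j+1}$, $0\le j\le n-1$, unchanged, so that $\Des_B$, $\des_B$ and $\maj_B$ are preserved; the defining condition depends only on the unordered pair $\{\pi_{i-1},\pi_{i+1}\}$ together with $\pi_{i-2},\pi_i,\pi_{i+2}$, which is what makes $\psi$ an involution once the local rule is tuned so that the swap cannot produce an earlier eligible index. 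The fixed points of $\psi$ form a rigidly structured subset $\mathcal{J}_n\subseteq\B_n$, and evaluating
\[
\sum_{\pi\in\mathcal{J}_n}(-1)^{\inv(|\pi|)}t^{\des_B(\pi)}q^{\maj_B(\pi)}
\]
by expanding over the admissible descent sets should yield $\bigl(1+(-1)^{n-1}tq^{n-1}\bigr)\prod_{i=0}^{n-2}(1-tq^i)$; compared with the proof of Theorem~\ref{thm:B-GF-length}, the replacement of the factor $(1-tq^{n-1})$ by $\bigl(1+(-1)^{n-1}tq^{n-1}\bigr)$ is exactly accounted for by those fixed points with $\inv(|\pi|)$ odd.

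The delicate part---the genuine obstacle---is pinning down the local rule so that $\psi$ is simultaneously well defined, involutive, and $\Des_B$-preserving at the two boundaries (the virtual entry $\pi_0=0$ on the left, and the absence of any entry to the right of $\pi_n$), and so that $\mathcal{J}_n$ is precisely the set whose weighted count is the claimed product rather than $\prod_{i=0}^{n-1}(1-tq^i)$ or $\prod_{i=0}^{n-1}(1+tq^i)$. As a cross-check and an alternative route, one can instead invoke Theorem~\ref{thm:B-GF-length} directly: from $(-1)^{\inv(|\pi|)}=(-1)^{\ell_B(\pi)}(-1)^{\nega(\pi)}$ one gets
\[
\sum_{\pi\in\B_n}(-1)^{\inv(|\pi|)}t^{\des_B(\pi)}q^{\maj_B(\pi)}=\prod_{i=0}^{n-1}(1-tq^i)-2\!\!\sum_{\substack{\pi\in\B_n\\ \nega(\pi)\text{ odd}}}\!\!(-1)^{\ell_B(\pi)}t^{\des_B(\pi)}q^{\maj_B(\pi)},
\]
so it remains to show that the residual sum equals $-\tfrac12\bigl(1+(-1)^{n-1}\bigr)tq^{n-1}\prod_{i=0}^{n-2}(1-tq^i)$; this vanishes for even $n$ and, for odd $n$, can be extracted by restricting the above involution to $\B_n\setminus\D_n$ or by combining with the type $D$ computations of Section~\ref{sec:Dn}.
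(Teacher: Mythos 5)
Your overall strategy---a sign-reversing involution that changes $|\pi|$ by a transposition while preserving $\Des_B$, followed by a weighted count of the fixed points---is the right shape of argument, but the proposal stops short of supplying the ingredients that constitute the actual proof. The involution you sketch (swapping the entries in positions $i-1$ and $i+1$ at the first suitable local extremum) is never shown to be well defined or involutive, its fixed-point set $\mathcal{J}_n$ is not identified, and the evaluation of the fixed-point sum is only asserted (``should yield''); you yourself flag the tuning of the local rule as ``the genuine obstacle,'' which is precisely where the content of the theorem lies. Note also that your premise that no nearest-neighbour move can work is an artifact of swapping whole signed entries: the paper's involution acts on the pair of \emph{values} $2i-1,2i$ for the least $i$ at which they are non-adjacent, or adjacent with opposite signs, or adjacent with equal signs but not occupying positions $2i-1,2i$; it swaps the two letters (keeping the signs in place) and, in the last case, also negates them. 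Since $2i-1$ and $2i$ are consecutive values, this preserves $\Des_B$ while changing $\inv(|\pi|)$ by one, the fixed points are exactly the $\pi$ in which each pair $2i-1,2i$ sits in positions $2i-1,2i$ with equal signs, and a further bijection onto $\mathcal{I}_n=\{\pi:\,|\pi_i|=i \text{ for all } i\}$ turns the fixed-point sum into a subset sum over $\Nega(\pi')$, the parity of $n$ (and, for odd $n$, whether the letter $n$ appears as $n$ or $\bar n$) producing the factor $1+(-1)^{n-1}tq^{n-1}$. None of this structure is present, even implicitly, in your candidate rule.

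Your fallback route is likewise incomplete. The identity $(-1)^{\inv(|\pi|)}=(-1)^{\ell_B(\pi)+\nega(\pi)}$ and the consequent reduction to proving
\begin{equation*}
\sum_{\substack{\pi\in\B_n\\ \nega(\pi)\text{ odd}}}(-1)^{\ell_B(\pi)}t^{\des_B(\pi)}q^{\maj_B(\pi)}
=-\tfrac12\bigl(1+(-1)^{n-1}\bigr)tq^{n-1}\prod_{i=0}^{n-2}\bigl(1-tq^i\bigr)
\end{equation*}
are correct, but this residual sum is exactly as hard as the theorem itself (given Theorem~\ref{thm:B-GF-length}, knowing it is equivalent to knowing the sum of $(-1)^{\ell_B}t^{\des_B}q^{\maj_B}$ over $\D_n$), and neither of your suggested closers works as stated: the involution $\theta$ of Theorem~\ref{thm:B-GF-length} changes $\nega$ by one and hence does not restrict to $\B_n\setminus\D_n$; ``the above involution'' is the very map you have not yet constructed; and Theorem~\ref{thm:D-GF} concerns $\ell_D$, $\des_D$, $\maj_D$ (with $\pi_0=-\pi_2$), which differ from $\ell_B$, $\des_B$, $\maj_B$ on $\D_n$, so it cannot be imported directly. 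Even the assertion that the residual sum vanishes for even $n$ is left unproved. In short, the proposal identifies the correct kind of argument but leaves every substantive step---well-definedness and involutivity of the map, determination of the fixed points, and the closed-form evaluation---unestablished.
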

\proof
For $\pi\in\B_n$ let $i$ be the smallest integer such that the letters $2i-1$ and $2i$ satisfy one of the following conditions.
\begin{enumerate}
\item[(C1)] They are not in adjacent positions.
\item[(C2)] They are in adjacent positions and have opposite signs.
\item[(C3)] They are in adjacent positions and have the same sign, but are not both at the $(2i-1)$-th and $(2i)$-th positions.
\end{enumerate}
Then, let $\psi(\pi)$ be the signed permutation obtained from $\pi$ by swapping the two letters $2i-1$ and $2i$ if they satisfy (C1) or (C2), or swapping the two letters $2i-1$ and $2i$ and changing their signs if they satisfy (C3).
For examples, $\psi(\bar{2}13\bar{5}64)=\bar{1}23\bar{5}64$, $\psi(\bar{1}\bar{2}5\bar{3}6\bar{4})=\bar{1}\bar{2}5\bar{4}6\bar{3}$ and $\psi(\bar{2}\bar{1}6{3}{4}5)=\bar{2}\bar{1}6\bar{4}\bar{3}5$.
It is clear that $\psi$ is an involution on $\B_n$, where the fixed points are those signed permutations $\pi$ satisfying that, for $i=1,2,\ldots,\lfloor n/2\rfloor$, the letters $2i-1$ and $2i$ have the same sign and are both in the $(2i-1)$-th and $(2i)$-th positions.
Let $\mathcal{F}_n$ denote the set of fixed points.
For instances, $\mathcal{F}_2=\{12,\bar{1}\bar{2},21,\bar{2}\bar{1}\}$ and $\mathcal{F}_3=\{123,\bar{1}\bar{2}3,213,\bar{2}\bar{1}3,12\bar{3},\bar{1}\bar{2}\bar{3},21\bar{3},\bar{2}\bar{1}\bar{3}\}$.

Let $\pi$ be a non-fixed point under $\psi$.
Let $i$ be the smallest integer such that the letters $2i-1$ and $2i$ satisfy one of the conditions (C1) -- (C3) and let $\pi':=\psi(\pi)$.
Obviously, $\inv(|\pi'|)=\inv(\pi)\pm 1$.
Moreover, it is easy to see that $\Des_B(\pi')=\Des_B(\pi)$ if the letters $2i-1$ and $2i$ in $\pi$ satisfy either (C1) or (C2).
Suppose the letters $2i-1$ and $2i$ in $\pi$ satisfy (C3) and appear in $\pi_k$ and $\pi_{k+1}$.
Observe that $|\pi_{k-1}|>2i$, otherwise we could find a smaller integer $i'<i$ such that $2i'-1$ and $2i'$ satisfy one of (C1) -- (C3).
This implies $\Des_B(\pi')=\Des_B(\pi)$, which concludes that $\psi$ preserves $\des_B$ and $\maj_B$.
Therefore, we have
\begin{align*}
\sum_{\pi\in\B_n}(-1)^{\inv(|\pi|)}t^{\des_B(\pi)}q^{\maj_B(\pi)} = \sum_{\pi\in\mathcal{F}_n}(-1)^{\inv(|\pi|)}t^{\des_B(\pi)}q^{\maj_B(\pi)}.
\end{align*}

Now, we define a bijective correspondence between $\mathcal{F}_n$ and $\mathcal{I}_n$, which is given in the proof of Theorem~\ref{thm:B-GF-length} as the set of signed permutations $\pi\in\B_n$ with $|\pi_1|=1, |\pi_2|=2, \ldots, |\pi_n|=n$, according to the following rules.
\begin{itemize}
\item Each pair of adjacent entries of type $2i,2i-1$ in $\mathcal{F}_n$ is replaced by $2i-1,\overline{2i}$.
\item Each pair of adjacent entries of type $\overline{2i},\overline{2i-1}$ in $\mathcal{F}_n$ is replaced by $\overline{2i-1},2i$.
\end{itemize}
Again, denote by $\pi'$ the resulting signed permutation.
It is clear that $\Des_B(\pi)=\Des_B(\pi')=\Nega(\pi')-1$, and thus $\des_B(\pi)=\nega(\pi')$ and $\maj_B{\pi}=\sum_{i\in\Nega(\pi')}(i-1)$.
Observe that both the patterns $2i,2i-1$ and $\overline{2i},\overline{2i-1}$ in $\pi$ provide $1$ to $\inv(|\pi|)$, while the ones $2i-1,\overline{2i}$ and $\overline{2i-1},2i$ in $\pi'$ provide $1$ to $\nega(\pi)$.
Then,
\begin{align*}
\inv(|\pi|)=
\begin{cases}
\nega(\pi'), & \text{if $n$ is even or $n$ is odd and $\pi_n=n$}; \\
\nega(\pi')-1, & \text{if $n$ is odd and $\pi_n=\bar{n}$}.
\end{cases}
\end{align*}
Therefore, when $n$ is even, we have
\begin{align*}
\sum_{\pi\in\mathcal{F}_n}(-1)^{\inv(|\pi|)}t^{\des_B(\pi)}q^{\maj_B(\pi)} &= \sum_{\pi\in\mathcal{I}_n}(-1)^{|\Nega(\pi)|}t^{|\Nega(\pi)|}q^{\sum_{i\in\Nega(\pi)}(i-1)} \\
&= \sum_{A\subseteq[n]}(-1)^{|A|}t^{|A|}q^{\sum_{i\in A}(i-1)} = \prod_{i=0}^{n-1}\left(1-tq^i\right).
\end{align*}
Also, when $n$ is odd, the generating function turns out to be
\begin{align*}
& \sum_{\pi\in\mathcal{I}_n\atop\pi_n=n}(-1)^{|\Nega(\pi)|}t^{|\Nega(\pi)|}q^{\sum_{i\in\Nega(\pi)}(i-1)} + \sum_{\pi\in\mathcal{I}_n\atop\pi_n=\bar{n}}(-1)^{|\Nega(\pi)|-1}t^{|\Nega(\pi)|}q^{\sum_{i\in\Nega(\pi)}(i-1)} \\
= & \sum_{A\subseteq[n-1]}(-1)^{|A|}t^{|A|}q^{\sum_{i\in A}(i-1)} +  \sum_{A\subseteq[n-1]}(-1)^{|A|}t^{|A|+1}q^{n-1+\sum_{i\in A}(i-1)} \\
= & (1+tq^{n-1}) \prod_{i=0}^{n-2}\left(1-tq^i\right).
\end{align*}
\qed

\subsection{Sign-balance results on $D_n$}

For a signed permutation $\pi\in\D_n$, let $$\Des_D(\pi):=\{i:\,0\leq i\leq n-1,\pi_i>\pi_{i+1}\}$$
with respect to the natural linear order $$\bar{n}<\cdots<\bar{1}<1\cdots<n,$$ where $\pi_0:=-\pi_2$.
Following \cite{BB_05}, the \emph{type $D$ descent number} of $\pi$, denoted by $\des_D(\pi)$, is defined to be the cardinality of $\Des_D(\pi)$, and the \emph{type D major index} of $\pi$ is given by $$\maj_D(\pi):=\sum_{i\in\Des_D(\pi)}i.$$

The proof of the following result is similar to that of Theorem~\ref{thm:B-GF-absinv}.

\begin{theorem}\label{thm:D-GF}
For any positive integer $n\geq 2$ we have
\begin{align*}
\sum_{\pi\in\D_n}(-1)^{\ell_D(\pi)}t^{\des_D(\pi)}q^{\maj_D(\pi)} = \big(1+(-1)^{n-1}tq^{n-1}\big)\prod_{i=0}^{n-2}\left(1-tq^i\right),
\end{align*}
and $\sum_{\pi\in\D_1}(-1)^{\ell_D(\pi)}t^{\des_D(\pi)}q^{\maj_D(\pi)}=1$.
\end{theorem}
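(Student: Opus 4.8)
The plan is to imitate the involution-plus-bijection strategy used in the proof of Theorem~\ref{thm:B-GF-absinv}, adapted to type $D$. First I would set up an involution $\psi_D$ on $\D_n$ that pairs up the letters $2i-1$ and $2i$ and cancels the terms coming from non-fixed points. Given $\pi\in\D_n$, let $i$ be the smallest index for which the letters $2i-1$ and $2i$ are not ``in standard position'', meaning: they are not in adjacent positions; or they are adjacent with opposite signs; or they are adjacent with the same sign but not occupying the $(2i-1)$-th and $(2i)$-th slots. Then $\psi_D(\pi)$ swaps these two letters (and flips both their signs in the last case), exactly as $\psi$ does in Theorem~\ref{thm:B-GF-absinv}. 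One must check that $\psi_D$ maps $\D_n$ to $\D_n$: in the first two cases the number of negatives is unchanged, and in the third case two sign flips occur simultaneously, so parity is preserved; hence $\psi_D$ is a genuine involution on $\D_n$. As in the type $B$ argument, $\Des_D$ is preserved (the key point being $|\pi_{k-1}|>2i$ at the relevant spot, which blocks any descent change), so $\des_D$ and $\maj_D$ are unchanged, while $\inv(|\pi|)$ changes by exactly $1$. Since $\ell_D(\pi)=\inv(\pi)-\sum_{i\in\Nega(\pi)}(\pi_i+1)$, I would verify that for a non-fixed point $\ell_D(\pi)$ and $\ell_D(\psi_D(\pi))$ differ by an odd amount; in the sign-flip case $\nega$ changes by $2$ and the contribution $\sum(\pi_i+1)$ over the negatives shifts, and in the swap cases $\inv$ changes by $1$. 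Either way $(-1)^{\ell_D(\pi)}=-(-1)^{\ell_D(\psi_D(\pi))}$, so the sum over $\D_n$ collapses to the sum over the fixed-point set $\mathcal{F}_n^{(D)}$, the even-signed permutations in which, for each $i\le\lfloor n/2\rfloor$, the letters $2i-1,2i$ sit in positions $2i-1,2i$ with equal sign.

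Next I would transport $\mathcal{F}_n^{(D)}$ to a nice reference set. Using the same local replacement rules as in Theorem~\ref{thm:B-GF-absinv} — a block $2i,(2i-1)$ becomes $(2i-1),\overline{2i}$ and a block $\overline{2i},\overline{2i-1}$ becomes $\overline{2i-1},2i$ — one obtains signed permutations with $|\pi'_k|=k$ for all $k$. The only subtlety is the type $D$ parity constraint and the definition of $\Des_D$ with $\pi_0:=-\pi_2$. I would show that for $\pi'$ with $|\pi'_k|=k$ one has $\Des_D(\pi')=\Nega(\pi')-1$ when $\pi_0<\pi_1$ (equivalently when $\pi_1>0$, using $\pi_0=-\pi_2=-2$) and an adjusted statement otherwise; more cleanly, $\des_D$ on such ``staircase'' permutations is again controlled by $\Nega$, and $\maj_D(\pi')=\sum_{i\in\Nega(\pi')}(i-1)$ up to a boundary correction at position $0$. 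Under the block-replacement bijection, each block contributes $1$ to $\inv(|\pi|)$ on the $\mathcal{F}_n^{(D)}$ side and $1$ to $\nega(\pi')$ on the staircase side, so $\inv(|\pi|)\equiv\nega(\pi')\pmod 2$ except possibly for a correction coming from the last letter when $n$ is odd, precisely as in Theorem~\ref{thm:B-GF-absinv}.

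Finally I would evaluate the resulting sum over staircase permutations as a product over subsets $A\subseteq[n]$ recording $\Nega$, exactly as in the displayed computations at the end of the proof of Theorem~\ref{thm:B-GF-absinv}: for $n$ even this gives $\prod_{i=0}^{n-1}(1-tq^i)$, but one must then check that the $n$ even value is actually $(1-(-1)^{\,}tq^{n-1})\prod_{i=0}^{n-2}(1-tq^i)=(1+(-1)^{n-1}tq^{n-1})\prod_{i=0}^{n-2}(1-tq^i)$, which matches since $(-1)^{n-1}=-1$; for $n$ odd the last-letter sign contributes the extra factor $(1+tq^{n-1})=(1+(-1)^{n-1}tq^{n-1})$ times $\prod_{i=0}^{n-2}(1-tq^i)$. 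The degenerate base case $n=1$ is handled separately: $\D_1=\{1\}$ with $\ell_D=\des_D=\maj_D=0$, giving the value $1$.

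The main obstacle I anticipate is not the combinatorics of the involution, which closely parallels the $B_n$ case, but the careful bookkeeping of the type $D$ descent statistic at the boundary: because $\Des_D$ uses the convention $\pi_0:=-\pi_2$ rather than $\pi_0:=0$, the reduction ``$\des_D=\nega$ on staircase permutations'' acquires a correction depending on the sign of $\pi_1$ (and on whether position $1$ or $2$ is involved), and I will need to verify that the involution $\psi_D$ genuinely preserves $\Des_D$ including the $0$-th slot, and that the block-replacement bijection interacts correctly with this convention. I expect this to be the only place where the proof differs nontrivially from that of Theorem~\ref{thm:B-GF-absinv}.
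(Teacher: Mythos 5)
There is a genuine gap, and it sits exactly at the spot you flagged but did not resolve: your involution $\psi_D$ copies conditions (C1)--(C3) verbatim, and it does \emph{not} preserve $\Des_D$ when $i=1$ and the letters $1,2$ occupy the first two positions with opposite signs, because of the convention $\pi_0:=-\pi_2$. Concrete counterexample in $\D_3$: $\pi=1\bar{2}\bar{3}$ has letters $1,2$ adjacent with opposite signs, so your map sends it to $\psi_D(\pi)=2\bar{1}\bar{3}$; but $\Des_D(1\bar{2}\bar{3})=\{0,1,2\}$ while $\Des_D(2\bar{1}\bar{3})=\{1,2\}$ (the descent at position $0$ is destroyed since $-\pi_2$ changes from $2$ to $1$), so the two terms are $+t^3q^3$ and $-t^2q^3$ and do not cancel. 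Consequently the sum does not collapse onto your fixed-point set: for $n=3$ your fixed points $\{123,\bar{1}\bar{2}3,213,\bar{2}\bar{1}3\}$ contribute only $(1-t)(1-tq)$, not the asserted $(1+tq^{2})(1-t)(1-tq)$. Relatedly, your explanation of the odd-$n$ factor $(1+tq^{n-1})$ via ``the sign of the last letter'' cannot work with your fixed-point set: the type-$D$ parity constraint forces $\pi_n=+n$ there, since all your blocks carry an even number of negative signs.

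The paper's proof repairs precisely this point. Its condition (D2) carries the exception ``but are not both at the first two positions if $i=1$'', so that all eight sign/order patterns of $\{1,2\}$ in positions $1,2$ become fixed points (e.g.\ $\F^D_3$ contains $1\bar{2}\bar{3}$ and $2\bar{1}\bar{3}$); these extra fixed points are exactly what produce the missing part of the polynomial. The bijection onto the staircase set $\mathcal{I}_n$ then needs four additional replacement rules for the first pair ($1\bar{2}\mapsto12$, $\bar{1}2\mapsto\bar{1}\bar{2}$, $2\bar{1}\mapsto1\bar{2}$, $\bar{2}1\mapsto\bar{1}2$), and the sign bookkeeping is carried out for $\ell_D$ directly (mod $2$), including a separate analysis of the case $n$ odd with $\pi_n=\bar{n}$, rather than reducing to $\inv(|\pi|)$ as in Theorem~\ref{thm:B-GF-absinv}. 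So the overall strategy you outline (involution plus block-replacement plus a subset computation) is the right one, but without modifying the involution at the boundary and enlarging the fixed-point set accordingly, the cancellation step fails and the final count is wrong.
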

\proof
For $\pi\in\D_n$ let $i$ be the smallest integer such that the letters $2i-1$ and $2i$ satisfy one of the following conditions.
\begin{enumerate}
\item[(D1)] They are not in adjacent positions.
\item[(D2)] They are in adjacent positions and have opposite signs, but are not both at the first two positions if $i=1$.
\item[(D3)] They are in adjacent positions and have the same sign, but not both at the $(2i-1)$-th and $(2i)$-th positions.
\end{enumerate}
Then, let $\psi(\pi)$ be the signed permutation obtained from $\pi$ by swapping the two letters $2i-1$ and $2i$ if they satisfy (D1) or (D2), or swapping the two letters $2i-1$ and $2i$ and changing their signs if they satisfy (D3).
It is clear that $\psi$ is an involution on $\D_n$, where the fixed points are those even-signed permutations $\pi$ satisfying that the letters $1$ and $2$ are at the first two positions and, for $i=2,3,\ldots,\lfloor n/2\rfloor$, the letters $2i-1$ and $2i$ have the same sign and are both at the $(2i-1)$-th and $(2i)$-th positions.
Let $\F^D_n$ denote the set of fixed points.
Note that, for $\pi\in\F^D_n$, if $n$ is even, $\pi_1$ and $\pi_2$ have the same sign; while, if $n$ is odd, exactly one of $\pi$ and $\pi_2$ is negative if and only if $\pi_n$ is negative.
For instances, $\F^D_2=\D_2$ and $\mathcal{F}_3=\{123,1\bar{2}\bar{3},\bar{1}2\bar{3},\bar{1}\bar{2}3,213,2\bar{1}\bar{3},\bar{2}1\bar{3},\bar{2}\bar{1}3\}$.

Let $\pi$ be a non-fixed point under $\varphi$.
Let $i$ be the smallest integer such that the letters $2i-1$ and $2i$ satisfy one of conditions (D1) -- (D3), and denote by $\pi'=\psi(\pi)$.
If the letters $2i-1$ and $2i$ satisfy either (D1) or (D2), then $\ell_D(\pi')=\ell_D(\pi)\pm 1$.
Since $1$ and $2$ are not both at the first two positions, $-\pi'_2>\pi'_1$ if and only if $-\pi_2>\pi_1$.
This verifies that $\Des_D(\pi')=\Des_D(\pi)$.
Now, suppose the letters $2i-1$ and $2i$ satisfy (D3), and appear at $\pi_k$ and $\pi_{k+1}$.
Note that $k>1$.
In this case, $|\pi_h|>2i$ for $h>k+1$, so changing the signs of $\pi_k$ and $\pi_{k+1}$ will not affect the size relationship between them and all entries on the right.
Therefore, $\inv(\pi')=\inv(\pi)\pm(4i-4)$ and $\sum_{\pi'<0}(\pi'+1)=\sum_{\pi<0}(\pi+1)\mp(4i-3)$, and thus $\ell_D(\pi')=\ell_D(\pi)\pm(8i-7)$.
On the other hand, since $k>1$ and $|\pi_{k-1}|>2i$ (by the same argument in the proof of Theorem~\ref{thm:B-GF-absinv}), we have $\Des_D(\pi')=\Des_D(\pi)$.
Hence we conclude that
\begin{align*}
\sum_{\pi\in\D_n}(-1)^{\inv(|\pi|)}t^{\des_D(\pi)}q^{\maj_D(\pi)} = \sum_{\pi\in\F^D_n}(-1)^{\inv(|\pi|)}t^{\des_D(\pi)}q^{\maj_D(\pi)}.
\end{align*}

Recall that $\mathcal{I}_n$ is the set of signed permutations with $|\pi_i|=i$ for all $i$.
Define a bijective correspondence between $\F^D_n$ and $\mathcal{I}_n$ according to the following rules.
\begin{itemize}
\item Each pair of adjacent entries of type $2i,2i-1$ in $\F^D_n$ is replaced by $2i-1,\overline{2i}$.
\item Each pair of adjacent entries of type $\overline{2i},\overline{2i-1}$ in $\F^D_n$ is replaced by $\overline{2i-1},2i$.
\item Pair $1\bar{2}$ in $\F^D_n$ is replaced by $12$.
\item Pair $\bar{1}2$ in $\F^D_n$ is replaced by $\bar{1}\bar{2}$.
\item Pair $2\bar{1}$ in $\F^D_n$ is replaced by $1\bar{2}$.
\item Pair $\bar{2}1$ in $\F^D_n$ is replaced by $\bar{1}2$.
\end{itemize}
For $\pi\in\F^D_n$ let $\pi'$ denote the corresponding signed permutation in $\mathcal{I}_n$.
It is routine to verify that $\Des_D(\pi)=\Des_B(\pi')=\Nega(\pi')-1$, and thus $\des_D(\pi)=\nega(\pi')$ and $\maj_D(\pi)=\sum_{i\in\Nega(\pi')}\big(i-1\big)$.

Now, we consider the relationship between $\ell_D(\pi)$ and $\nega(\pi')$.
When $n$ is even or $n$ is odd and $\pi_n=n$, by the structure of $\pi\in\F^D_n$, $\inv(\pi)$ is equal to an even integer plus the number of adjacent pairs of type $2i,2i-1$ or $\overline{2i-1},\overline{2i}$, and $\sum_{i\in\Nega(\pi)}(\pi_i+1)$ is equal to an even integer plus the number of adjacent pairs of type $\overline{2i-1},\overline{2i}$ or $\overline{2i},\overline{2i-1}$.
This implies that
\begin{align*}
\ell_D(\pi) \equiv |\big\{\text{adjacent pairs of type }2i,2i-1 \text{ or }\overline{2i},\overline{2i-1}\big\}| \quad (\text{mod }2).
\end{align*}
Adjacent pairs of types $2i,2i-1$ and $\overline{2i},\overline{2i-1}$ in $\pi$ are respectively replaced by $2i-1,\overline{2i}$ and $\overline{2i-1},2i$, each of which supports $1$ to $\nega(\pi')$, so we have
\begin{align*}
\ell_D(\pi) \equiv \nega(\pi') \quad (\text{mod }2).
\end{align*}
When $n$ is odd and $\pi_n=\bar{n}$, either $\pi_1$ or $\pi_2$ is negative.
By a similar argument, we have
\begin{align*}
\ell_D(\pi) \equiv \ell_D(\pi_1\pi_2) + |\big\{\text{adjacent pairs of type }2i,2i-1 \text{ or }\overline{2i},\overline{2i-1}\big\}| \quad (\text{mod }2).
\end{align*}
Observe that $\ell_D(\pi_1\pi_2)\equiv\nega(\pi'_1\pi'_2)$ (mod 2), and the element $\bar{n}$ contributes an even number to $\ell_D(\pi)$ and $1$ to $\nega(\pi')$. 
It follows that
\begin{align*}
\ell_D(\pi) \equiv \nega(\pi')+1 \quad (\text{mod }2).
\end{align*}
This concludes that
\begin{align*}
(-1)^{\ell_D(\pi)}=
\begin{cases}
(-1)^{\nega(\pi')}, & \text{if $n$ is even or $n$ is odd and $\pi_n=n$}; \\
-(-1)^{\nega(\pi')}, & \text{if $n$ is odd and $\pi_n=\bar{n}$}.
\end{cases}
\end{align*}

Hence, the result follows by the same computations shown at the end of the proof of Theorem~\ref{thm:B-GF-absinv}.
\qed

\section{Concluding Remarks}\label{sec:conclusion}
In this paper we proposed some new signed Euler-Mahonian and signed Eulerian identities for $\B_n$, $\D_n$ and $G_{r,n}=G(r,1,n)$.
For $\B_n$ and $\D_n$, when $n$ is even we derived signed Euler-Mahonian identities on restricted (even-)signed permutations, generalizing Wachs and Biagioli's results \eqref{eq:signedEM-A} and \eqref{eq:signedM-B}.
When $n$ is odd we also obtained signed Eulerian identities on (even-)signed permutations, which generalize D\'{e}sarm\'{e}nien and Foata's result \eqref{eq:signedE-A-odd}.
Except for the odd $n$ cases, we further extend above results to $G_{r,n}$, where the `sign' is taken to be any one-dimensional character of the form $\chi_{1,b}$, for $0\leq b\leq r-1$.
The missing piece for odd $n$ seems elusive.
Moreover, in the last section we derived some neat closed forms for the sign-balance polynomials on $\mathcal{B}_n$ and $\mathcal{D}_n$, where the statistics are taken to be the Coxeter-type descent numbers and major indices.

Note that many identities of this work fit in the framework called the `folding phenomenon'~\cite{EFPT_15}, in which one has an identity of the form
$$\sum_{\pi \in \mathcal{X}_{2n} \text{ or } \mathcal{X}_{2n+1}} (-1)^{\mathsf{stat}_1(\pi)} q^{\mathsf{stat}_2(\pi)} =f(q) \sum_{\pi\in \mathcal{X}_n} q^{2\cdot \mathsf{stat}_2(\pi)},$$
where $\mathcal{X}_n$ is a family of combinatorial objects of size $n$ with statistics $\mathsf{stat}_1$ and $\mathsf{stat}_2$, and $f(q)$ is a rational function.
Note that in this viewpoint Theorem~\ref{thm:B-odd-length} is surprising since the statistic of the right hand side changes after the folding.
In the case of permutations there have been several instances of the folding phenomenon when $\mathcal{X}_n \subset \mathfrak{S_n}$ are certain families (say, involutions, 321-avoiding permutations, et al.), see~\cite{AR_04,EFPT_15,FHL_18} for examples.
It is then a natural question to investigate other types. We leave it to the interested readers.

\section*{Acknowledgments}

The authors would like to express their gratitude to the referees for their valuable comments and suggestions on improving the presentation of this paper.



\end{document}